\documentclass[english,12pt]{amsart}
\usepackage{amsmath,amsfonts,amssymb,amsthm,fancyhdr,bm,verbatim,hyperref}
\usepackage{fullpage}
\usepackage{mathtools}
\usepackage{todonotes}

\makeatletter
\numberwithin{equation}{section}
\numberwithin{figure}{section}
\theoremstyle{plain}
\newtheorem{thm}{\protect\theoremname}
\theoremstyle{plain}
\newtheorem{cor}[thm]{\protect\corollaryname}
\theoremstyle{plain}
\newtheorem{lem}[thm]{\protect\lemmaname}
\newtheorem{conj}[thm]{\protect\conjecturename}
\newtheorem{prop}[thm]{\protect\propositionname}
\numberwithin{thm}{section}
\theoremstyle{definition}
\newtheorem*{Def}{Definition}

\newcommand{\phom}{p_{\textup{hom}}}
\newcommand{\F}{\mathcal{F}}
\newcommand{\E}{\mathbb{E}}

\makeatother

\newcommand{\Fpoly}{\F_{\textup{poly}}}
\newcommand{\Fpartpoly}{\F_{\textup{part-poly}}}
\newcommand{\Fpolyfact}{\F_{\textup{polyfact}}}
\newcommand{\Fpartpolyfact}{\F_{\textup{part-polyfact}}}

\usepackage{babel}
\providecommand{\corollaryname}{Corollary}
\providecommand{\lemmaname}{Lemma}
\providecommand{\theoremname}{Theorem}
\providecommand{\conjecturename}{Conjecture}
\providecommand{\propositionname}{Proposition}

\begin{document}

\subjclass[2020]{05C55, 05C65, 05D10, 05D40}
\title{Independent sets in hypergraphs with a forbidden link}
\author{Jacob Fox}
\thanks{Department of Mathematics, Stanford University, Stanford,
CA 94305, USA. Email: {\tt jacobfox@stanford.edu}. Research supported by
a Packard Fellowship and by NSF grant DMS-1855635.} 
\author{Xiaoyu He}
\thanks{Department of Mathematics, Stanford University, Stanford,
CA 94305, USA. Email: {\tt alkjash@stanford.edu}. Research supported by NSF Graduate Research Fellowship DGE-1656518.}
\maketitle

\begin{abstract}
We prove there exists a $3$-uniform hypergraph on $N$ vertices with independence number $O(\log N / \log \log N)$ in which there are at most two edges among any four vertices. This bound is tight and solves a longstanding open problem of Erd\H{o}s and Hajnal in Ramsey theory. The proof is based on a carefully designed probabilistic construction and the analysis depends on entropy methods. We further extend this result to prove tight bounds on various other hypergraph Ramsey numbers. 

\end{abstract}

\section{Introduction\label{sec:Introduction}}

Ramsey theory consists of many deep results across diverse fields of mathematics that can be summarized by the motto ``every large system contains a well-organized subsystem." Well-known examples include Dvoretzky's theorem in Banach space theory, the Paris-Harrington theorem in logic, the Hales-Jewett theorem in combinatorics, Szemer\'edi's theorem in number theory, and the equivalent Furstenberg multiple recurrence theorem in ergodic theory. 

The quintessential example, from which the area gets its name, is Ramsey's theorem. If $k\ge 2$, a $k$-uniform hypergraph, or simply $k$-graph, $G=(V,E)$ consists of a vertex set $V$ and an edge set $E\subseteq {V \choose k}$. Ramsey's theorem states that for any $k$-graphs $H_1$ and $H_2$, there is a positive integer $N$ such that any $k$-graph $G$ of order $N$ contains $H_1$ (as a subgraph) or its complement $\overline{G}$ contains $H_2$. The Ramsey number $r(H_1, H_2)$ is the smallest such $N$, and the main goal of graph Ramsey theory is to estimate $r(H_1,H_2)$, especially when one or both of $H_i$ is a complete graph $K_n$.

Even for the graph case $k=2$, the growth rate of Ramsey numbers is not well understood. The best known bounds on diagonal Ramsey numbers are of the form 
\[
\sqrt{2}^{(1+o(1))n}\le r(K_n,K_n) \le 4^{(1+o(1))n}
\]
and the exponential constants have not been improved since 1947; see~\cite{Co} and~\cite{Sp} for the best known bounds.

One of the crowning achievements of graph Ramsey theory was the determination of the order of the off-diagonal Ramsey number
\begin{equation}\label{eq:triangle}
r(K_3, K_n) = \Theta\Big(\frac{n^2}{\log n}\Big).
\end{equation}
The lower bound was proved first by Ajtai, Koml\'os, and Szemer\'edi~\cite{AjKoSz}, and the matching upper bound was proved by Kim~\cite{Ki}. An impressive amount of progress (see~\cite{BoKe, FiGrMo, Sh}) has been made towards finding the exact asymptotic in (\ref{eq:triangle}), leaving only a multiplicative gap of $4+o(1)$ between the best known bounds; see Spencer \cite{Sp11} for a history of this problem and its influence on the development of the probabilistic method. In stark contrast, the order (and even the exponent) of $r(G, K_n)$ is still open for almost all other graphs $G$, including all cycles or complete graphs on at least $4$ vertices.

We know even less about hypergraph Ramsey numbers of uniformity $k\ge 3$. Let $K_n^{(k)}$ be the complete $k$-graph on $n$ vertices. In the diagonal case (see \cite{CoFoSu,CoFoSu2,CoFoSu3,GrRoSp}), the best known bounds for $k=3$,
\begin{equation}\label{eq:diagonal-hypergraph-ramsey}
2^{\Omega(n^2)} \le r(K_n^{(3)}, K_n^{(3)}) \le 2^{2^{O(n)}},
\end{equation}
differ by an entire exponential order. For each larger uniformity greater than $3$, the lower and upper bounds go up by one exponential order, leaving the same type of exponential gap. Techniques developed by Erd\H{o}s-Rado and Erd\H{o}s-Hajnal allow us to lift bounds on $3$-graph Ramsey numbers to all higher uniformities, so closing the gap in (\ref{eq:diagonal-hypergraph-ramsey}) would also close the gap for all $k\ge 3$. Thus, the case $k=3$ is of central importance in this subject.

There is also a large gap between the lower and upper bounds for off-diagonal hypergraph Ramsey numbers.  For $s \geq 4$ fixed, the best known bounds \cite{CoFoSu} are of the form
\begin{equation}\label{offhypthree}
2^{\Omega(sn \log (2n/s))} \le r(K_s^{(3)}, K_n^{(3)}) \le 2^{O(n^{s-2}\log n)}.
\end{equation}
In fact, the lower bound holds for all $s\le n$. In this paper, we will study a different, but closely related, off-diagonal hypergraph Ramsey number.

In 1972, Erd\H{o}s and Hajnal \cite{ErHa} posed the problem of determining the minimum independence number of a $3$-graph on $N$ vertices in which there are at most two edges among any four vertices, and showed that the answer is between $\Omega(\log N/\log \log N)$ and $O(\log N)$. In the language of Ramsey numbers, this is equivalent to 
\begin{equation}\label{K43minusedge}
    2^{\Omega(n)}\le r(K_4^{(3)} \setminus e, K_n^{(3)}) \le n^{O(n)},
\end{equation}
where $K_4^{(3)}\setminus e$ is the $3$-graph on four vertices and three edges. This problem has received considerable attention during the half-century since it was posed, see \cite{CoFoSu,Er90,MuSu18,MuSu19}. Mubayi and Suk \cite{MuSu18} wrote that it ``is a very interesting open problem, as $K_4^{(3)} \setminus e$ is, in some sense, the smallest 3-uniform hypergraph whose Ramsey number with a clique is at least
exponential.''

We solve this problem of Erd\H{o}s and Hajnal and show that the upper bound in (\ref{K43minusedge}) is tight. 
\begin{thm}\label{thm:K43minusedge}
We have $r(K_4^{(3)} \setminus e, K_n^{(3)})=n^{\Theta(n)}$.
\end{thm}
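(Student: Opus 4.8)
The upper bound $r(K_4^{(3)}\setminus e, K_n^{(3)})\le n^{O(n)}$ is classical — it is the right-hand side of \eqref{K43minusedge}, due to Erd\H{o}s--Hajnal \cite{ErHa} — so the whole content is the matching lower bound: for a suitable constant $c>0$ and all large $n$ one must construct a $3$-graph $G$ on $N=n^{cn}$ vertices containing no copy of $K_4^{(3)}\setminus e$ and with $\alpha(G)<n$. The first step, and the source of the paper's title, is the elementary observation that $K_4^{(3)}\setminus e$ consists of three triples through a common vertex, so $G$ contains a copy of $K_4^{(3)}\setminus e$ precisely when the link $G_v=\{\,xy:vxy\in E(G)\,\}$ of some vertex $v$ contains a triangle. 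Thus the task reduces to: build a $3$-graph on $n^{\Omega(n)}$ vertices in which every link is triangle-free and yet every set of $n$ vertices spans an edge.

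For the construction I would use a probabilistic design in which triangle-freeness of every link is forced deterministically and randomness is spent only on destroying large independent sets. Concretely, identify the vertices with a structured set of size $N=n^{\Theta(n)}$ — for definiteness, sequences in $[m]^{[k]}$ with $m,k=\Theta(n)$, linearly ordered — and decide whether a triple $\{x,y,z\}$ is an edge by a rule evaluated locally at the first coordinate (or few coordinates) on which $x,y,z$ disagree. The rule must be engineered so that any configuration producing three edges $vxy$, $vxz$, $vyz$ would force $v,x,y,z$ into a forbidden coincidence there; this is exactly where care is needed, since the obvious rules either manufacture a triangle through the ``tip'' vertex of a degenerate pattern or are far too sparse to control $\alpha$. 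Into this skeleton one injects independent randomness — a random linear order or tournament, or a random graph, attached to each coordinate or to each ordered pair of vertices — deciding which ``mixed'' triples occur, tuned to make $G$ as dense and pseudorandom as triangle-free links permit. (An alternative realization is a steered semi-random construction, for instance a variant of the $K_4^{(3)}\setminus e$-free process; what matters in either case is that the randomness live on roughly $N^2$ independent coordinates, so that a fixed small set being independent genuinely pins down many of them.)

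To bound the independence number I would argue by contradiction and entropy. Fix an $n$-set $W$ and reveal the random data attached to its vertices one vertex at a time, in the linear order; at each step, triangle-freeness of the links together with the local edge rule forces the data of the newly revealed vertex to create no edge with the vertices already exposed, which sharply restricts it. Tracking these restrictions through conditional entropies as the revelation proceeds, one shows $\Pr[\,W\text{ independent}\,]\le n^{-c'n^2}$ for some $c'>c$; since there are only $\binom{N}{n}\le N^n=n^{cn^2}$ candidate sets, a union bound shows that with positive probability $G$ has no independent set of size $n$. Entropy is the right tool because the events ``$T\subseteq W$ is a non-edge'' are very far from independent — a naive product estimate is both incorrect and, taken at face value, would yield an impossibly strong bound — so the dependency structure must be accounted for, not ignored.

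The principal obstacle is to make these two requirements compatible and quantitatively sharp. To improve on the earlier $2^{\Omega(n)}$ bound of Erd\H{o}s--Hajnal and reach $n^{\Omega(n)}$, exposing a putative independent set must cut the available random choices down by a factor $n^{\Theta(n^2)}$ — roughly a factor $n$ for each pair of its vertices, rather than the mere constant factor per pair that a generic triangle-free-link construction would give — which forces the local edge rule to be remarkably rigid; simultaneously $G$ must remain dense enough that no $n$ vertices are edge-free, which pulls in the opposite direction. Designing a rule that threads this needle while keeping every link triangle-free, and then pushing the conditional-entropy estimate cleanly through all the peeling steps without leaking the gain, is the technical heart of the argument.
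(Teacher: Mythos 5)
Your reduction to triangle-free links is correct and is exactly the paper's starting point, and your high-level diagnosis — that randomness should live on $\sim N^2$ independent coordinates, that a fixed $n$-set being independent must pin down a factor of roughly $n$ per pair, and that entropy is the right tool because the non-edge events are far from independent — is accurate. But the two pieces you leave as sketches are precisely the content of the proof, and your primary concrete suggestion for the first piece would not work.

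The ``sequences in $[m]^{[k]}$ with edges decided at the first coordinate of disagreement'' idea is the classical stepping-up construction, and the issue is not merely that it needs care: that skeleton intrinsically gives the wrong order. The local rule at one coordinate can only distinguish $O(1)$ bits per pair, so an independent $n$-set costs a constant factor per pair and yields $2^{\Omega(n)}$, which is the Erd\H{o}s--Hajnal lower bound you are trying to beat. Your parenthetical alternative — ``a random graph attached to each ordered pair'' — is the direction the paper actually takes, but the missing insight is the \emph{adjacent triangle} rule: fix an auxiliary triangle-free (in fact high-odd-girth) graph $A$, color each ordered pair $(i,j)$ by a uniformly random vertex $\chi(i,j)\in V(A)$, and declare $\{i,j,k\}$ an edge iff \emph{all three} conditions $\chi(i,j)\sim\chi(i,k)$, $\chi(j,i)\sim\chi(j,k)$, $\chi(k,i)\sim\chi(k,j)$ hold in $A$. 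The point of requiring all three (versus only the one centered at the minimum vertex, as in the earlier Conlon--Fox--Sudakov construction) is that each vertex's link becomes a subgraph of a blowup of $A$ and is therefore triangle-free for structural reasons, while the density needed to make $\alpha(\Gamma)<n$ on $n^{\Omega(n)}$ vertices is preserved. This is the one idea that makes the two requirements compatible, and it does not appear in your proposal.

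On the second piece, the entropy estimate is not a routine peeling argument. Because the construction enforces all three adjacencies simultaneously, the edge indicators across a candidate independent set are entangled in a way that breaks the vertex-by-vertex revelation you describe: exposing one vertex does not decouple the remaining constraints, which is exactly why the Conlon--Fox--Sudakov analysis does not transfer. The paper instead bounds the number of ``triangle-free colorings'' of $K^*_{n,n,n}$ by partitioning the directed edge set into three natural classes, defining low-entropy subfamilies of colorings, constructing an averaging invariant that separates them, and applying Shearer's inequality to homomorphism counts into $\overline{A}$ through a notion of $m$-good bipartite graphs. Asserting $\Pr[\,W\text{ independent}\,]\le n^{-c'n^2}$ ``by conditional entropy'' elides all of this; in particular the naive revelation gives no mechanism to certify that the conditional choices really shrink by a factor $n^{\Theta(1)}$ per step rather than by a constant. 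So while your strategic outline points in the right direction, both the construction and the analysis have genuine gaps that the paper's argument is designed to close.
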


We prove Theorem~\ref{thm:K43minusedge} with a carefully designed probabilistic construction, and entropy inequalities are used in the analysis.

In the original language of Erd\H{o}s and Hajnal, Theorem~\ref{thm:K43minusedge} states that the minimum possible independence number of a $(K_4^{(3)}\setminus e)$-free $3$-graph on $N$ vertices is $\Theta(\log N/\log \log N)$. We further generalize this result to Theorem~\ref{thm:linkhypergraph} below, exhibiting a large family of $3$-graphs $H$ which satisfy $r(H,K_n^{(3)})=n^{\Theta(n)}$.

Given a graph $G$, its {\it link hypergraph} $L_{G}$ is the $3$-graph with vertex set $V(G)\cup\{u\}$ whose edges are exactly the triples $\{u,v,w\}$ for which $\{v,w\}\in E(G)$. Observe that $K_4^{(3)} \setminus e$ is $L_{K_3}$, the link hypergraph of the triangle. Conlon, Fox, and Sudakov \cite{CoFoSu} proved that if $G$ is bipartite, then $r(L_G,K_{n}^{(3)})=n^{\Theta(1)}$, where the implied constants depend on $G$, and if $G$ is non-bipartite, then $r(L_G,K_{n}^{(3)})\ge 2^{\Omega(n)}$. We improve this lower bound.

\begin{thm}\label{thm:linkhypergraph}
If $G$ is a fixed non-bipartite graph, then $r(L_G,K_n^{(3)}) = n^{\Theta(n)}$. 
\end{thm}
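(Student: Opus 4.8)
The plan is to prove the two inequalities $r(L_G,K_n^{(3)})\ge n^{\Omega(n)}$ and $r(L_G,K_n^{(3)})\le n^{O_G(n)}$ separately, with the lower bound (the new content) extracted from a variant of the construction behind Theorem~\ref{thm:K43minusedge} and the upper bound obtained by adapting the Erdős–Hajnal-type argument that gives the upper bound in $(\ref{K43minusedge})$; the implied constant in the upper bound will depend on $G$, while the lower bound is uniform in $G$.

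\textbf{Lower bound.} The starting observation is that a $3$-graph $\mathcal H$ contains a copy of $L_G$ exactly when some link — the graph $\mathcal H_v$ on $V(\mathcal H)\setminus\{v\}$ with edge set $\{\,\{x,y\}:\{v,x,y\}\in\mathcal H\,\}$ — contains a copy of $G$. Since $G$ is non-bipartite it contains an odd cycle, so any $3$-graph \emph{all of whose links are bipartite} is automatically $L_G$-free, and in fact $L_{G'}$-free for every non-bipartite $G'$ at once. It therefore suffices to produce, for each $n$, a $3$-graph on $N=n^{\Omega(n)}$ vertices with independence number less than $n$ in which every link is bipartite. I would obtain this from the probabilistic construction used for the lower bound of Theorem~\ref{thm:K43minusedge}, which already produces triangle-free links: one either checks that those links are in fact bipartite, or builds a $2$-coloring of the vertex set into the construction — for instance, passing to a bipartite-type cover of the vertex set and only retaining a triple when the colors of its three vertices obey a fixed parity pattern — so that every link has all of its edges across the two color classes. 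Such a modification only deletes edges and changes $N$ by a fixed power, so the entropy estimate bounding the independence number should carry over with only cosmetic changes.

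\textbf{Upper bound.} One must show that every $L_G$-free $3$-graph $\mathcal H$ on $N$ vertices has $\alpha(\mathcal H)=\Omega_G(\log N/\log\log N)$, equivalently $r(L_G,K_n^{(3)})\le n^{O_G(n)}$. Here I would run the recursive independent-set construction of Erdős and Hajnal: keep a live set $W\subseteq V(\mathcal H)$ and a partial independent set $S$, and at each step either find a link $\mathcal H_v[W]$ with an independent set $W'$ of size at least $|W|/(\log N)^{C}$ — replace $W$ by $W'$, add $v$ to $S$, and note $S$ remains extendable to an independent set of $\mathcal H$ by anything independent in $\mathcal H[W']$ — or else land in the ``dense'' case where every link $\mathcal H_v[W]$ is a $G$-free graph on about $|W|$ vertices with independence number below $|W|/(\log N)^{C}$. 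In the dense case one argues that $G$-free graphs with sub-polynomially small independence number are sparse, so $\mathcal H[W]$ is sparse and has a large independent set directly from a sparse-hypergraph bound. Since at most $n$ steps of the first type shrink $W$ by a factor at most $(\log N)^{Cn}$, for $n\le c\log N/\log\log N$ the process either runs for $n$ steps (so $|S|\ge n$) or reaches the dense case and wins; taking $C$ large balances the two outcomes.

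\textbf{Main obstacle.} On the lower-bound side the delicate point is arranging that the links are bipartite — not merely triangle-free — while keeping the independence number at $O(\log N/\log\log N)$, which requires working inside the construction of Theorem~\ref{thm:K43minusedge} rather than using it as a black box. On the upper-bound side the crux is the dense case, i.e.\ showing that a $G$-free graph with sub-polynomially small independence number must be sparse. When $G$ is an odd cycle $C_{2k+1}$ this is clean: a neighborhood in such a graph is $P_{2k}$-free, hence by the Erdős–Gallai theorem has chromatic number at most $2k-1$, so independence number a constant fraction of its size, which forces the maximum degree and thus the edge count to be small. For a non-bipartite $G$ of larger chromatic number this fails outright (neighborhoods in a $K_4$-free graph are only triangle-free, not sparse, and Ramsey–Turán theory allows such graphs to be genuinely dense), so one must argue more carefully — iterating the peeling of links so that iterated neighborhoods become progressively sparser forbidden configurations, or feeding in a Ramsey–Turán-type input — and making this work for general non-bipartite $G$ is the main additional difficulty of the upper bound beyond the odd-cycle case.
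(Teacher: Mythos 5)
Your lower-bound plan has a fundamental obstruction that the paper itself points out. You propose to make every link bipartite, which would indeed kill $L_G$ for every non-bipartite $G$ simultaneously — but the paper remarks immediately after Theorem~\ref{thm:oddgirthlowerbound} that any $3$-graph on $2^{n-1}$ vertices in which every link is bipartite has independence number at least $n$. So the best you could ever get this way is $N=2^{O(n)}$, which is exactly the old Erd\H{o}s--Hajnal bound and falls exponentially short of $n^{\Omega(n)}$. The paper's Theorem~\ref{thm:oddgirthlowerbound} instead builds links of large \emph{odd girth}: they contain no short odd cycles, hence no copy of $C_g$ where $g$ is the odd girth of the fixed $G$, hence no copy of $G$ — but they do contain long odd cycles, and this relaxation is precisely what makes the superexponential growth possible. (Your follow-up idea that the Theorem~\ref{thm:K43minusedge} links ``are in fact bipartite'' is also incorrect: the auxiliary graph there is triangle-free, not bipartite, and blowups of it are triangle-free, not bipartite; forcing bipartiteness by deleting edges would only increase the independence number.) This is not a cosmetic fix — you must aim for the weaker ``locally bipartite'' property from the outset and carry $g$ through the argument, as Theorem~\ref{thm:oddgirthlowerbound} does.

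On the upper bound, you miss the easy reduction that makes the problem finite: if $G$ has $s$ vertices then $L_G\subseteq L_{K_s}$, so it suffices to bound $r(L_{K_s},K_n^{(3)})$, which the paper does in Theorem~\ref{thm:linkcliqueversusclique} by the vertex online Ramsey game (Builder maintains a blue clique $U$ and a set $W$ with red edges into $U$; the $\alpha$-weighted candidate-set accounting of Lemma~\ref{lem:vertex-online} converts a Builder win into the Ramsey bound). This route never touches the ``dense case'' you flag, and you are right to flag it: for $s\ge 4$ the claim that a $K_s$-free graph with sub-polynomial independence number must be sparse is \emph{false} — Ramsey--Tur\'an constructions (Bollob\'as--Erd\H{o}s) give $K_4$-free graphs on $N$ vertices with $\Theta(N^2)$ edges and $o(N)$ independence number, so the link graphs can be genuinely dense and your Erd\H{o}s--Hajnal induction stalls there. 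The iterated-peeling idea is not a small patch. In short: both halves of your proposal need to be replaced, the lower bound by the odd-girth construction and the upper bound by (at minimum) the reduction to $L_{K_s}$ plus an argument that avoids the sparsity step.
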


The implied constants in Theorem \ref{thm:linkhypergraph} depend on $G$. Theorem \ref{thm:linkhypergraph} determines the behavior of off-diagonal Ramsey numbers for link hypergraphs and implies the curious fact that they always grow either polynomially or superexponentially in $n$. 

To prove the lower bound in Theorem \ref{thm:linkhypergraph}, we will prove a stronger result. To state it, we need a few definitions. The {\it link} $F_v$ of a vertex $v$ in a $k$-graph $F$ is the $(k-1)$-graph on $V(F) \setminus \{v\}$ where $e$ is an edge of $F_v$ if $e \cup \{v\} \in E(F)$. Note that a $3$-graph is $L_G$-free if and only if the link of each vertex is $G$-free. The {\it odd girth} of a non-bipartite graph is the length of its shortest odd cycle, and is infinite for a bipartite graph. Finally, let $K_{n,n,n}^{(3)}$ be the complete balanced tripartite $3$-graph on $3n$ vertices for which the edges are precisely the triples with one vertex in each part.

\begin{thm}\label{thm:oddgirthlowerbound}
For all $g\ge 3$ and $n$ sufficiently large in terms of $g$, there is a $3$-graph $\Gamma$ on  $n^{\Omega(n/g)}$ vertices such that the link of each vertex has odd girth greater than $g$ and the complement of $\Gamma$ is $K_{n,n,n}^{(3)}$-free. 
\end{thm}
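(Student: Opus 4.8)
The plan is to build $\Gamma$ as a random $3$-graph whose edge set is determined by a carefully chosen algebraic or combinatorial labeling of the vertices, so that two competing properties hold simultaneously: (i) the link of every vertex has large odd girth, and (ii) the complement contains no copy of $K_{n,n,n}^{(3)}$, i.e. $\Gamma$ meets every balanced tripartite triple-system on $3n$ vertices. These requirements pull in opposite directions—(ii) wants $\Gamma$ to be edge-dense, while (i) forbids short odd closed walks in each link, which heavily constrains the edges through any fixed vertex. The resolution should be a \emph{structured random} construction: assign to each vertex $v$ an independent uniformly random element $x_v$ of some finite structure (a vector space over $\mathbb{F}_q$, or a set of functions, with $q$ a growing parameter), and declare $\{u,v,w\}$ an edge according to a fixed symmetric rule $\Phi(x_u,x_v,x_w)$ engineered so that in each link the "graph of the rule" is triangle-free, and more generally $\{C_3,C_5,\dots,C_g\}$-free, by an algebraic identity. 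I would take the number of vertices to be $m = n^{\Theta(n/g)}$ and the ground field size $q$ of order roughly $n^{\Theta(1/g)}$ (or a comparable power), chosen so that the union bounds below close.

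The key steps, in order: First, fix the local rule $\Phi$ and verify deterministically that for any choice of labels, the link of every vertex $v$—viewed as a graph on the other vertices with edge $vw \in$ link iff $\{u,v,w\}\in E(\Gamma)$—has odd girth greater than $g$. This is the one step that must be exact rather than probabilistic: the natural way is to show the link graph is a subgraph of (a blow-up of) a fixed graph of odd girth $>g$, e.g. arising from a generalized polygon or from a Cayley-type construction on $\mathbb{F}_q^t$, so that no odd cycle of length $\le g$ can appear regardless of randomness; alternatively one shows any odd closed walk of length $\le g$ in a link forces an algebraic relation among $\le g$ labels that is impossible (has no solutions, not merely few). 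Second, estimate the failure probability of property (ii): for a fixed tripartition $(A,B,C)$ of a fixed set of $3n$ vertices with $|A|=|B|=|C|=n$, the complement contains $K_{n,n,n}^{(3)}$ on $(A,B,C)$ iff \emph{no} cross-triple is an edge of $\Gamma$; since distinct cross-triples are "mostly independent" events and each has probability $\approx p$ of being an edge for the appropriate density $p$ of $\Phi$, this probability is at most something like $(1-p)^{n^3/\mathrm{poly}}$ or, after a second-moment or dependency-graph (Lovász Local Lemma / Janson) argument, $\exp(-\Omega(p\, n^3))$. Third, union-bound over all $\binom{m}{n,n,n}\binom{m}{3n}$-ish choices of tripartite configuration inside the $m$ vertices: this costs a factor $\le m^{3n} = n^{O(n^2/g)}$, which must be beaten by $\exp(-\Omega(p n^3))$, and this is exactly what pins down the allowable range of $m$ (hence the exponent $\Omega(n/g)$) and forces $q$, and thus $p$, to be polynomial in $n$ rather than constant. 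Finally, combine: with positive probability both (i) (which holds always) and (ii) (which holds with high probability) are satisfied, so such a $\Gamma$ exists.

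The main obstacle I expect is reconciling the two constraints quantitatively—\emph{making the local rule $\Phi$ both high-girth in every link and dense enough}. High odd girth in the link graphs is a severe sparsity condition (a triangle-free graph on $N$ vertices has $\le N^2/4$ edges, and $C_{\le g}$-free graphs are far sparser, of order $N^{1+O(1/g)}$ edges), so the link of each vertex has only $\approx m^{1+O(1/g)}$ edges out of $\binom{m}{2}$, meaning $\Gamma$ has edge density $p \approx m^{O(1/g)-1}$, which is tiny. Then property (ii) needs $\exp(\Omega(p n^3)) \gg m^{3n}$, i.e. $p n^3 \gg n^2 \log m$, i.e. $m^{1-O(1/g)} \lesssim n^3/(n^2\log m) = n/\log m$; solving this self-consistently with $\log m = \Theta((n/g)\log n)$ yields $m = n^{\Theta(n/g)}$, but the bookkeeping is delicate and the constants in the girth-vs-density tradeoff (the $O(1/g)$ in the exponent) have to be made explicit and compatible. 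A secondary difficulty is handling the dependency structure among cross-triple edge events sharing labels, for which I would either design $\Phi$ to make same-pair triples genuinely independent (e.g. $\Phi$ depends on a fresh coordinate for the third vertex) or control the correlations via a Janson-type inequality; getting a clean $\exp(-\Omega(pn^3))$ tail rather than a weaker bound is what ultimately determines whether the exponent $\Omega(n/g)$ is achievable.
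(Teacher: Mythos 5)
There is a genuine gap, and it starts with your density bookkeeping. You treat the odd-girth condition on each link as if it forced the link to be $C_{\le g}$-free as a graph on all $\approx N$ other vertices (where $N$ is the number of vertices of $\Gamma$), and hence to have only $N^{1+O(1/g)}$ edges; this gives $p\approx N^{-1+O(1/g)}$, and your own self-consistency inequality $N^{1-O(1/g)}\lesssim n/\log N$ then has \emph{no} solution with $N=n^{\Theta(n/g)}$ (plug it in: the left side is $n^{\Theta(n/g)}$ while the right side is $g/\log n<1$). The point you are missing is that large odd girth, unlike large girth, is preserved under blow-ups and is compatible with constant edge density (bipartite graphs have infinite odd girth). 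The paper's construction exploits exactly this: it fixes a small auxiliary graph $A$ of girth $>g$ on only $m=n^{1/6}$ vertices, colors each \emph{ordered pair} of vertices of $\Gamma$ by a uniform random element of $V(A)$, and makes $\{i,j,k\}$ an edge when all three induced pairs of colors are adjacent in $A$. Each link is then a subgraph of a blow-up of $A$, so it has odd girth $>g$ deterministically, while the edge density of $\Gamma$ is about $m^{-3+3/g}=n^{-\Theta(1)}$ --- polynomial in $n$, not in $N$. That is what makes $pn^3\gg n^2\log N$ feasible for $N=n^{\Theta(n/g)}$. Your vertex-label rule $\Phi(x_u,x_v,x_w)$ with a single label per vertex does not obviously reproduce this; the per-ordered-pair randomness is essential.

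The second gap is the probability estimate $\exp(-\Omega(pn^3))$ for a fixed tripartite configuration. The adjacent-triangle events sharing a vertex are strongly dependent (each involves colors on edges out of all three vertices), and no Janson/local-lemma argument is given in the paper because a more serious obstruction exists: a coloring can avoid \emph{all} adjacent triangles in a structured way, e.g.\ by mapping the relevant colors into a large bipartite-independent pair of $A$. If $A$ had a bipartite hole of linear size, the count of such ``triangle-free colorings'' would only be $2^{-O(n^2)}$ below trivial, which survives a union bound only up to $N=2^{O(n)}$. The paper therefore requires $A$ to have small bipartite independence number ($\alpha_2(A)<m^{1-1/(3g)}$) in addition to girth $>g$, and replaces the independence heuristic by an entropy-style counting argument (Shearer's inequality, a decomposition into low-entropy families, and a random-recoloring step) to show the number of colorings of $K_{n,n,n}^*$ with no adjacent triangle is at most $m^{(6-\Omega(1/g))n^2}$. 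Your suggested fix of making the rule depend on ``a fresh coordinate for the third vertex'' to restore independence is essentially the earlier Conlon--Fox--Sudakov construction, which the paper explains gives only $2^{\Omega(n)}$ vertices; the gain to $n^{\Omega(n/g)}$ comes precisely from imposing all three adjacency conditions and then doing the harder, non-independent analysis.
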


One can think of a graph with large odd girth as being ``locally bipartite." Theorem~\ref{thm:oddgirthlowerbound} shows there exists a $3$-graph on $n^{\Omega(n)}$ vertices with independence number $n$ that is ``locally bipartite" in this sense. In contrast, it can be proved by induction on $n$ that every $3$-graph on $2^{n-1}$ vertices in which the link of each vertex is bipartite has independence number at least $n$. Thus, Theorem~\ref{thm:oddgirthlowerbound} implies that it is substantially easier for a $3$-graph to have all links ``locally bipartite" than to have all links bipartite. 

The proof of Theorem~\ref{thm:oddgirthlowerbound} constructs the $3$-graph $\Gamma$ randomly. The link of each vertex is chosen to be a subgraph of a random blowup of some predetermined auxiliary graph with large odd girth; note that blowing up a graph does not decrease its odd girth. To show that the complement of $\Gamma$ is $K_{n,n,n}^{(3)}$-free with positive probability, we use a counting argument motivated by entropy methods.

To prove the upper bound in Theorem~\ref{thm:linkhypergraph}, we use the so-called ``vertex online Ramsey game" of Conlon, Fox, and Sudakov \cite{CoFoSu}, which has been used to prove upper bounds for many hypergraph Ramsey numbers. The explicit upper bound we prove is as follows.

\begin{thm}\label{thm:linkcliqueversusclique}
For all $s,n \geq 3$, we have $r(L_{K_s},K_n^{(3)})<(2n)^{sn}$. 
\end{thm}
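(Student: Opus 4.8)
The plan is to prove the stated bound directly, in the spirit of the vertex online Ramsey game of Conlon, Fox, and Sudakov~\cite{CoFoSu}. Let $G$ be an $L_{K_s}$-free $3$-graph on $N$ vertices, so every link $G_v$ is a $K_s$-free graph; assuming $N\ge(2n)^{sn}$, I aim to find an independent set of size $n$. Maintain a partial independent set $I$ together with a \emph{candidate set} $S$ disjoint from $I$, under the invariant that $I\cup\{w\}$ is independent for every $w\in S$; start with $I$ a single vertex and $S$ all the others. Write $m=|S|$ and $q=|I|\le n-1$. At each step, first test whether some $v\in S$ has at least $m/n$ vertices $w\in S$ with $\{u,v,w\}\notin E(G)$ for every $u\in I$. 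If so, perform an \emph{extension step}: put $v$ into $I$ and keep exactly those $w$; the new candidate set has size at least $m/n$, and since $I\cup\{v,w\}$ is independent for each such $w$, the invariant persists.

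If no such $v$ exists, a short double count shows $\sum_{u\in I}e\big(G_u[S]\big)$ is at least roughly $m^2/2$, so some link $G_{u^\ast}$ with $u^\ast\in I$ has at least roughly $m^2/(2n)$ edges inside $S$; then a maximum-degree vertex $v'$ of $G_{u^\ast}[S]$ has at least roughly $m/n$ neighbours in $S$, and we replace $S$ by $N_{G_{u^\ast}}(v')\cap S$ and \emph{record} the pair $(v',u^\ast)$ --- a \emph{drilling step}. The key observation is that if two drilling steps record the same vertex $u$, their recorded vertices are adjacent in $G_u$: the candidate set only shrinks over the process, and at the earlier step it was replaced by a subset of the $G_u$-neighbourhood of the earlier recorded vertex, from which the later recorded vertex was subsequently drawn. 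Hence the vertices recorded against any fixed $u$ form a clique in the $K_s$-free graph $G_u$, so \emph{at most $s-1$ drilling steps record any given vertex of $I$}. Since $I$ ever contains at most $n$ vertices, there are at most $n(s-1)$ drilling steps and at most $n-1$ extension steps, so the process reaches $|I|=n$ after fewer than $sn$ steps, each of which shrinks $|S|$ by a factor of at most $2n$. Consequently $S$ never empties before $|I|=n$ as soon as $N$ exceeds a suitable polynomial in $n$ of degree below $sn$; a short computation shows $N=(2n)^{sn}$ comfortably suffices, yielding the required independent set and hence $r(L_{K_s},K_n^{(3)})<(2n)^{sn}$.

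The crux, and the single place where the hypothesis that all links are $K_s$-free is genuinely used, is the accounting for drilling steps. A priori the links $G_u$ could stay dense on $S$ for a long stretch, so that no extension step is ever available; the only remedy is to replace a per-step progress guarantee with a \emph{global} bound on the total number of drilling steps, and the clique argument furnishes exactly such a bound, charging at most $s-1$ drilling steps to each vertex of $I$. The rest is routine but needs care: the averaging must be run against all of $I$ at once so that a step costs only a factor $O(n)$ (a larger power of $n$ would be fatal), the dichotomy should be split at the threshold $m/n$ so that both branches lose the same factor, and the small additive losses together with the cases of small $s$ and $n$ are absorbed using the slack between $n^{sn}$ and $(2n)^{sn}$.
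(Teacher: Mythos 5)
Your proof is correct. It obtains the stated bound by a direct greedy argument rather than via the vertex online Ramsey game abstraction the paper uses, but the two proofs are close cousins: your extension/drilling dichotomy and candidate set $S$ mirror Painter's density rule and the shrinking candidate set in the paper's Lemma~\ref{lem:vertex-online}, your growing independent set $I$ mirrors Builder's blue clique $U$ in Lemma~\ref{lem:vertex-online1}, and your key observation --- that drilling steps charged to a fixed $u\in I$ record vertices forming a clique in the $K_s$-free link $G_u$, so each $u$ is charged at most $s-1$ times --- is the direct analogue of the paper's red forward star $K_{1,s-1}^*$ win condition. What your version buys is a self-contained and in fact slightly sharper count: you control the total shrinkage by roughly $(2n)^{s(n-1)}$ rather than the paper's $(n\sqrt{e})^{sn}$, because you never pay for the many blue edges Builder draws inside $U$. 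What the paper's version buys is modularity: Lemma~\ref{lem:vertex-online} is a reusable transfer principle applied elsewhere. The steps you flag as routine do need to be written out --- in particular, one should verify that when no extension is available, the double count gives a drilling shrink factor at most $2n$ (this requires $m$ to exceed a small constant, which holds throughout given the slack), and the terminal step when $|I|=n-1$ should simply take any $v\in S$ without invoking the threshold --- but all of these are indeed absorbed by the gap between $(2n)^{s(n-1)}$ and $(2n)^{sn}$.
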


Notice that Theorems~\ref{thm:oddgirthlowerbound} and~\ref{thm:linkcliqueversusclique} together show that if $G$ is a graph on $s$ vertices with odd girth $g\ge 3$, 
then for $n$ sufficiently large we have 
\[
\Omega(1/g) \le \frac{\log r(L_{G},K_n^{(3)})}{n \log n} \le O(s).
\]
It would be interesting to determine the exact dependence on $G$.

We now turn to the Ramsey numbers of link hypergraphs of cliques against cliques, so that the link hypergraph is no longer fixed in size. Our main result in this direction implies that for any fixed $\varepsilon>0$ and $s<n^{1-\varepsilon}$, Theorem \ref{thm:linkcliqueversusclique} is tight up to a constant factor in the exponent.

\begin{thm}\label{thm:linkcliquelower}
For all $s \geq 3$ and $n\ge 1$, we have  \[r(L_{K_s},K^{(3)}_{n,n,n}) = {n+s \choose s}^{\Theta(n)},\] where the implicit constant factors in the exponent do not depend on $s$. \end{thm}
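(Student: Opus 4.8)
The plan is to prove the two bounds $r(L_{K_s},K^{(3)}_{n,n,n}) \le \binom{n+s}{s}^{O(n)}$ and $r(L_{K_s},K^{(3)}_{n,n,n}) \ge \binom{n+s}{s}^{\Omega(n)}$ separately, with an absolute constant in each exponent. The upper bound is short. Write $m := r(K_s,K_{2n})$; the Erd\H{o}s--Szekeres bound gives $m \le \binom{s+2n-2}{s-1}$, and since the sequence $k \mapsto \binom{s+k}{k}$ is log-concave and equals $1$ at $k=0$, we get $m \le \binom{s+2n}{s} \le \binom{n+s}{s}^{2}$. Let $F$ be an $L_{K_s}$-free $3$-graph on $N := 2n\,m^{2n}$ vertices; we show $\overline{F}$ contains $K^{(3)}_{n,n,n}$. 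Pick disjoint vertex sets $D$ and $P$ with $|D| = m$ and $|P| = n\binom{m}{2n}$ (possible, as $|D|+|P| \le N$). For each $p \in P$ the link $F_p$ is $K_s$-free, so $F_p[D]$ --- a $K_s$-free graph on $m = r(K_s,K_{2n})$ vertices --- contains an independent $2n$-set $T_p$. Since $|P| > (n-1)\binom{m}{2n}$, some $T^\star \in \binom{D}{2n}$ has $T_p = T^\star$ for at least $n$ values of $p$; let $A$ consist of $n$ of these and split $T^\star = B \sqcup C$ into halves. For any $a \in A$, $b \in B$, $c \in C$ the pair $\{b,c\}$ lies in $T^\star = T_a$, which is independent in $F_a$, so $\{a,b,c\} \notin E(F)$; thus $A,B,C$ span $K^{(3)}_{n,n,n}$ in $\overline{F}$. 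Hence $r(L_{K_s},K^{(3)}_{n,n,n}) \le 2n\,m^{2n} \le 2n\binom{n+s}{s}^{4n} \le \binom{n+s}{s}^{5n}$, with no $s$-dependence in the exponent.

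The lower bound is the heart of the matter, and I would obtain it from a probabilistic construction generalizing that of Theorem~\ref{thm:oddgirthlowerbound}. Fix a suitable $K_s$-free ``seed'' graph $G^\star$: for large $s$, a near-extremal Ramsey graph on about $m = r(K_s,K_{2n}) = \binom{n+s}{s}^{\Theta(1)}$ vertices with independence number at most $2n$; for small $s$ (notably $s=3$, where links must be triangle-free), a locally sparse graph exactly as in Theorem~\ref{thm:oddgirthlowerbound}. Construct $\Gamma$ on $N = \binom{n+s}{s}^{\Omega(n)}$ vertices by equipping each vertex $v$ with an independent random colouring $\phi_v$ of the other vertices by $V(G^\star)$, together with independent random coins on pairs, and declaring $\{u,v,w\} \in E(\Gamma)$ precisely when, from each of the three vertices' viewpoints, the other two receive $G^\star$-adjacent colours and the associated coin is heads. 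Property (i), that every link is $K_s$-free, is immediate: $\Gamma_v$ is a subgraph of the blowup of $G^\star$ along $\phi_v$, and blowing up or taking subgraphs of a $K_s$-free graph preserves $K_s$-freeness. For property (ii), that $\overline{\Gamma}$ has no $K^{(3)}_{n,n,n}$, fix a candidate tripartition $(A,B,C)$ with parts of size $n$: conditioning on the colourings, a transversal triple is \emph{eligible} when all three viewpoints see $G^\star$-adjacent colours, and the probability that no eligible transversal triple survives its coins equals $\E\big[(1-p^{3})^{E}\big]$, where $p$ is the coin probability and $E$ the number of eligible transversal triples. A union bound over the at most $N^{3n}$ tripartitions then reduces everything to showing, for each fixed tripartition, that this probability is at most $\binom{n+s}{s}^{-\Omega(n^{2})}$. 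Using per-vertex rather than global colourings is what makes this plausible: a fully degenerate colouring (one forcing $E$ small) now costs about $(\alpha(G^\star)/m)^{\Theta(n^{2})}$ --- the exponent $n^2$ arising because $n$ viewpoints each independently colour $2n$ vertices --- and this beats $N^{-3n}$ when $N = m^{\Omega(n)}$.

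The hard part, as isolated above, is the tail estimate: for a fixed tripartition, the probability that the colourings leave fewer than (say) half the expected $\Theta(n^{3})$ eligible transversal triples must be bounded by $\binom{n+s}{s}^{-\Omega(n^{2})} = e^{-\Omega(n^{2}\log\binom{n+s}{s})}$. Standard concentration is nowhere near strong enough --- changing one vertex's colouring can alter $E$ by $\Theta(n^{2})$ while $\E[E] = \Theta(n^{3})$, so a bounded-differences argument yields only $e^{-\Omega(n)}$, losing a factor $n$ (and the $\log\binom{n+s}{s}$) in the exponent. This is exactly where entropy methods are needed: one should classify a ``bad'' colouring by which of the three viewpoints is blamed for the failure of each transversal triple and count such colourings through an entropy/counting inequality that is sharp in its dependence on the edge density and independence number of $G^\star$. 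A secondary difficulty, important for the uniformity statement, is calibrating $G^\star$, $p$, and the palette size so that the base of the bound is exactly $\binom{n+s}{s}$ with an $s$-independent exponent constant over all $3 \le s$; I expect this to need separate treatment of the ranges $s \lesssim n$ and $s \gtrsim n$, with a locally sparse seed in the former and a near-extremal Ramsey seed in the latter, and a check that each produces $N = \binom{n+s}{s}^{\Omega(n)}$.
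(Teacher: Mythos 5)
Your upper bound argument is correct and complete: the pigeonhole step (many vertices whose $K_s$-free links, restricted to a common $m$-set $D$, must reuse the same independent $2n$-set) is sound, and the estimate $r(K_s,K_{2n})\le\binom{s+2n}{s}\le\binom{n+s}{s}^2$ checks termwise since $i(2n+i)\le (n+i)^2$. This is a more direct route than the paper, which derives the upper bound from a general blow-up recursion $r(H(v,t),G)\le t\cdot 2^h\, r(H,G)^{h-1}$ applied to $r(L_{K_s},L_{K_n})\le r(K_s,K_n)+1$; the two arguments rest on the same pigeonhole idea, but yours is self-contained and yields the $s$-independent exponent immediately.

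The lower bound, however, has a genuine gap, and it sits exactly where you place it: you never prove the tail estimate $\Pr\bigl[E<\tfrac12\E[E]\bigr]\le\binom{n+s}{s}^{-\Omega(n^2)}$ for a fixed tripartition, and that estimate is the entire content of the theorem. Gesturing at ``entropy methods that classify bad colourings by which viewpoint is blamed'' is not a proof: for a \emph{fixed} seed graph $G^\star$, the colourings that kill all eligible triples (for instance, mapping whole parts into a sparse or independent portion of $G^\star$) must be counted precisely, and small independence number alone does not control them --- one needs quantitative bounds on common non-neighbourhoods and a Shearer-type projection argument, which is exactly what the paper's Section~4 is built around and is by far its most involved part. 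More importantly, you miss the device that makes the regime $s\ge 14$ elementary: the paper takes the seed graph itself to be random, $A=G(m,p)$ with $p=m^{-2/(s-1)}$ below the $K_s$ threshold. After discarding an event of probability $(O(n^2/(\delta m)))^{\Omega(n^2)}$ in which many colours repeat, the remaining $\Omega(n^3)$ triples have all six of their colours pairwise distinct, so the corresponding adjacency events depend on \emph{disjoint} potential edges of $G(m,p)$ and are genuinely mutually independent with probability $p^3$ each; the failure probability is then just $(1-p^3)^{\Omega(n^3)}$ plus the repeated-colour term, with no concentration or entropy input. Your fixed near-extremal Ramsey seed forfeits this independence, and your per-flag coins cannot restore it, since coins only delete eligible triples and are useless on colourings for which $E=0$. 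For small $s$ the paper does fall back on the full machinery of Theorem~\ref{thm:oddgirthlowerbound}, which you would need to import wholesale. As written, your lower bound is a plausible plan, not a proof.
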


Note that Theorem \ref{thm:linkcliquelower} is equivalent to
\[
    r(L_{K_s},K^{(3)}_{n,n,n})= 
\begin{cases}
     \left(\frac{2n}{s}\right)^{\Theta(sn)}& \text{if } s\leq n,\\
    \left(\frac{2s}{n}\right)^{\Theta(n^2)}              & \text{if $s>n$.}
\end{cases}
\]
Furthermore, our proof shows that Theorem \ref{thm:linkcliquelower} also holds for $s\ge 14$ if we replace $K^{(3)}_{n,n,n}$ by any dense hypergraph on $\Theta(n)$ vertices which is a sub-hypergraph of a blow-up of $L_{K_n}$. 

Erd\H{o}s and Hajnal \cite{ErHa} introduced and began the study of the following function in order to get better insight into hypergraph Ramsey numbers. 

\begin{Def}
Let $f_k(N;s,t)$ denote the minimum possible independence number of a $k$-graph on $N$ vertices in which any $s$ vertices contain fewer than $t$ edges.  
\end{Def}

For example, $f_3(N,4,3)$ is essentially the inverse function of $r(K_4^{(3)} \setminus e,K_n^{(3)})$. 

For fixed $k\ge 3$ and $s \ge k+1$, consider the rate of growth of the functions $f_k(N;s,t)$ as $t$ increases from $1$ to ${s\choose k}$. Erd\H{o}s and Hajnal  conjectured that starting from $t=1$, $f_k(N;s,t)$ grows like a power of $N$, then after a point $t = h_1^{(k)}(s)$, it grows like a power of $\log N$, and then there's a point $t = h_2^{(k)}(s)$ at which it grows like a power of $\log \log N$, and so on up to a threshold $t = h_{k-2}^{(k)}(s)$ past which it grows like the $(k-2)$-fold iterated logarithm $\log _{(k-2)} N$. 

Erd\H{o}s and Hajnal further conjectured a specific value for $h_1^{(k)}(s)$, the polynomial to exponential threshold, and Erd\H{o}s later (see \cite{ChGr}) offered \$500 to resolve this conjecture. This conjecture was solved for $k=3$ and infinitely many $s$ by Conlon, Fox, and Sudakov \cite{CoFoSu} and recently solved for $k \geq 4$ by Mubayi and Razborov \cite{MuRa}. Mubayi and Suk \cite{MuSu19+} recently proved the general conjecture in the case $s=k+1$.

Despite this progress on the rough behaviour, the order of $f_k(N;s,t)$ has remained wide open. Using our methods, we determine the order of $f_3(N;s,t)$ for a constant fraction of values of $t$ for each fixed $s$.

\begin{thm}\label{thm:constant-fraction}
There exist functions $e(s) \le (0.266+o(1)){s \choose 3}$ and $E(s) \ge  (0.464 - o(1)){s \choose 3}$ such that for any fixed $s\ge 4$ and $t\in (e(s), E(s)]$, 
\[
f_3(N;s,t) = \Theta(\log N/\log \log N),
\]
where the implicit constants are allowed to depend on $s$.
\end{thm}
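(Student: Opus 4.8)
The plan is to prove the two bounds in $f_3(N;s,t)=\Theta(\log N/\log\log N)$ separately: an upper bound $f_3(N;s,t)=O_s(\log N/\log\log N)$ (a construction), valid once $t>e(s)$, and a matching lower bound $f_3(N;s,t)=\Omega_s(\log N/\log\log N)$ (a Ramsey-type bound), valid once $t\le E(s)$. The functions $e(s)$ and $E(s)$ are best \emph{defined} as the optimal local edge densities attainable by the two arguments; the theorem then reduces to two extremal computations showing $e(s)\le(0.266+o(1))\binom{s}{3}$ and $E(s)\ge(0.464-o(1))\binom{s}{3}$, and in particular $e(s)<E(s)$ so that the range $(e(s),E(s)]$ is nonempty.

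For the lower bound, observe that a $3$-graph on $N$ vertices in which every $s$ vertices span fewer than $t$ edges is exactly an $\mathcal H$-free $3$-graph, where $\mathcal H$ is the finite family of all $s$-vertex $3$-graphs with exactly $t$ edges; it therefore suffices to prove $r(\mathcal H,K_n^{(3)})\le n^{O_s(n)}$. I would establish this with the vertex online Ramsey game of \cite{CoFoSu} that underlies Theorem~\ref{thm:linkcliqueversusclique}: Builder introduces vertices one at a time while maintaining a large blue clique (an independent set in the red $3$-graph), and Painter must avoid completing a copy of some $H\in\mathcal H$, i.e.\ must keep every $s$-set from acquiring $t$ red edges. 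The quantitative heart of the argument is a ``room to extend'' estimate: once $t$ lies below the threshold $E(s)$, at every stage an $n^{-O(1)}$ fraction of the remaining vertices can be added legally, so the game terminates within $n^{O_s(n)}$ rounds. I should stress that this cannot simply be reduced to $L_{K_a}$-freeness: the number of edges $L_{K_a}$ forces on $s$ vertices is at most $\binom{s-1}{2}$, which is far below $0.464\binom{s}{3}$, so denser forbidden configurations genuinely must be handled. The value of $E(s)$ then emerges from optimizing this extension inequality, and beating $0.464\binom{s}{3}$ is the first of the two extremal computations.

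For the upper bound I would generalize the probabilistic construction behind Theorems~\ref{thm:K43minusedge} and~\ref{thm:oddgirthlowerbound}: a random $3$-graph $\Gamma$ on $N=n^{\Omega_s(n)}$ vertices whose link at each vertex is a (random subgraph of a) random blow-up of a fixed auxiliary graph $R=R(s)$, with $R$ chosen so that two events hold simultaneously with positive probability. First, exactly as in Theorem~\ref{thm:oddgirthlowerbound}, the entropy method certifies that the complement of $\Gamma$ contains no $K_{n,n,n}^{(3)}$, hence $\alpha(\Gamma)=O_s(n)=O_s(\log N/\log\log N)$. Second, a union bound over all $\binom{N}{s}$ sets of $s$ vertices, together with a large-deviation estimate for the number of edges a random blow-up places inside a fixed $(s-1)$-set in each link, forces every $s$ vertices of $\Gamma$ to span at most $e(s)$ edges, where $e(s)$ is essentially $\binom{s}{3}$ times the smallest local edge density one can impose on the links of $\Gamma$ while the entropy argument still goes through. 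Choosing $R$ to minimize this local density — so that $R$ is forced to avoid small complete bipartite subgraphs, yet remains rich enough for the entropy bound — is the second extremal computation and yields $e(s)\le(0.266+o(1))\binom{s}{3}$.

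The main obstacle is the construction, and within it the tension between its two requirements: making every $s$-set sparse pushes the links toward sparse, bipartite-like graphs, while forcing a small independence number pushes them toward combinatorially rich ones, and $e(s)\approx0.266\binom{s}{3}$ is precisely the break-even point. Two estimates must be verified together for the \emph{same} choice of $R$ and blow-up randomness: the entropy/counting bound on $\alpha(\Gamma)$, which is delicate because the links of $\Gamma$ are not independent (they must agree on shared triples), and the large-deviation bound on the edge count over every one of the $\approx N^s$ sets of size $s$, which is affordable only because the concentration beats $N^s$. The Ramsey-type lower bound, by contrast, should follow the template of Theorem~\ref{thm:linkcliqueversusclique} fairly closely once the sharpest ``room to extend'' inequality has been isolated, so the residual difficulty there is mainly the extremal optimization that fixes $E(s)$.
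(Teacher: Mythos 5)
Your high-level split — a construction giving $f_3(N;s,t)=O_s(\log N/\log\log N)$ for $t>e(s)$, and a Ramsey bound giving $f_3(N;s,t)=\Omega_s(\log N/\log\log N)$ for $t\le E(s)$, with each threshold defined as the optimum of the corresponding argument — matches the paper's strategy. But both halves of your argument have real gaps, and in each case the key observation in the paper is something you did not reach.

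For the upper bound, the paper does not need any new union bound or large-deviation estimate over $s$-sets. Instead it simply applies Theorem~\ref{thm:oddgirthlowerbound} with $g>s$ and observes that a graph with odd girth greater than $s$ is bipartite when restricted to any $s-1$ vertices. Hence in the resulting $\Gamma$, \emph{deterministically}, every set of $s$ vertices induces a $3$-graph all of whose links are bipartite, so it has at most $e(s)$ edges where $e(s)$ is the Tur\'an-type extremal number of a $3$-graph on $s$ vertices with bipartite links. The value $e(s)\le(0.266+o(1))\binom{s}{3}$ is Razborov's flag-algebra bound, cited rather than derived. Your proposed route — union-bounding over the roughly $N^s=n^{\Omega(sn)}$ subsets and hoping for concentration of the edge count in each one — cannot work: the number of edges inside an $s$-set is a bounded random variable (at most $\binom{s}{3}$), so there is no law of large numbers to invoke, and no feasible tail bound can beat $N^s$. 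You also do not identify $e(s)$ as this specific extremal function, which is what makes the numerical constant attainable.

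For the lower bound, the paper does not attempt to run the online Ramsey game directly against denser forbidden families. Instead it uses Propositions~\ref{prop:blowup} and~\ref{prop:replace} to show that the family $\Fpolyfact$ of $3$-graphs $H$ with $r(H,K_n^{(3)})=n^{O(n)}$ is closed under the blowup operation $H\mapsto H(v,F)$, and it already contains the link hypergraphs $L_{K_a}$ by Theorem~\ref{thm:linkcliqueversusclique}. One then recursively builds a dense member of $\Fpolyfact$ by iterating $L_{K_{s_2}}(v,F)$; this yields the recursion $E(s_1+s_2)\ge E(s_1)+s_1\binom{s_2}{2}$, and optimizing with $s_1=\alpha s$, $\alpha=\frac{\sqrt3}{2}-\frac12$, gives $E(s)\ge(0.464-o(1))\binom{s}{3}$. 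You are right that $L_{K_a}$ alone is too sparse, but the resolution is this blowup-closure machinery, not a re-analysis of Builder's strategy; without it, your ``room to extend'' sketch is too vague to yield the recursion that produces $0.464$, and it is not clear that a direct game argument against the full family of $t$-edge $s$-vertex $3$-graphs can be made to work at all.
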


The function $e(s)$ is the maximum number of edges in a $3$-graph on $s$ vertices in which the link of each vertex is bipartite. The function $E(s)$ is the maximum number of edges in a $3$-graph $H$ on $s$ vertices for which $r(H,K_n^{(3)})=n^{O(n)}$. The bound on $E(s)$ comes from a recursive construction of $3$-graphs with this property. 

\noindent {\bf Organization.} We prove the lower bounds and upper bounds on Ramsey numbers separately. In Section~\ref{sec:setup}, we present the randomized construction that is used for all the lower bound proofs. Section~\ref{sec:large-d} uses this machinery to prove the lower bound in Theorem~\ref{thm:linkcliquelower}, while Section~\ref{sec:main} uses it to prove Theorem~\ref{thm:oddgirthlowerbound}. 

After that, we prove the upper bound in Theorem~\ref{thm:linkcliqueversusclique} in Section~\ref{sec:link-upper-bound} using the vertex online Ramsey game. Finally, we prove the upper bound in Theorem~\ref{thm:linkcliquelower} in Section~\ref{sec:recursive}, and study the more general problem of classifying all $3$-graphs $H$ for which $r(H,K_n^{(3)})=n^{O(n)}$, proving Theorem~\ref{thm:constant-fraction}. We conclude with Section~\ref{sec:closing} describing some related open problems on hypergraph Ramsey numbers.

For the sake of clarity of presentation, we systematically omit floor and ceiling signs whenever they are not crucial.

\section{The Lower Bound Construction}\label{sec:setup}

In this section, we sketch the lower bound proofs, as the construction and some of the details of the proof are the same for the lower bounds in Theorems~\ref{thm:oddgirthlowerbound} and~\ref{thm:linkcliquelower}. Given a non-bipartite graph $G$ and a dense $3$-graph $F$,\footnote{In both Theorems~\ref{thm:oddgirthlowerbound} and~\ref{thm:linkcliquelower}, we take $F=K_{n,n,n}^{(3)}$, although we mainly use that $F$ is dense.} we will construct a large $L_G$-free $3$-graph $\Gamma$, such that $\overline{\Gamma}$ contains no copy of $F$. 

We say that a graph $A$ is $\hom(G)$-free if there does not exist a graph homomorphism from $G$ to $A$. The first step is to construct an auxiliary $\hom(G)$-free graph $A$ that will look like the Erd\H os-R\'enyi random graph $G(m,p)$ where $m\ge 1$ and $p\in (0,1)$ are chosen appropriately. 

Next, we pick a uniform random function $\chi: [N]^2 \rightarrow V(A)$. We simply ignore the values $\chi(v,v)$ along the diagonal, so $\chi$ can be interpreted as a random edge-coloring of the complete directed graph $K_N^*$ (with edges in both directions for each distinct pair of vertices) by elements of $V(A)$. We call a triple $1\le i<j<k\le N$ an {\it adjacent triangle} of $\chi$ if and only if the three adjacencies $\chi(i,j)\sim\chi(i,k)$, $\chi(j,i)\sim\chi(j,k)$,
and $\chi(k,i)\sim\chi(k,j)$ hold simultaneously in $A$. Then the final $\Gamma=\Gamma(\chi)$ is the $3$-graph on vertex set $[N]$ whose edges are the adjacent triangles of $\chi$.

The next lemma motivates the definition of $\Gamma(\chi)$ and our choice of $A$. The upshot is that the link of every vertex in $\Gamma(\chi)$ is some subgraph of a blowup of $A$.

\begin{lem}\label{lem:surj-free}
If $A$ is $\hom(G)$-free, then $\Gamma(\chi)$ is $L_G$-free.
\end{lem}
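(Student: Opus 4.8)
The plan is to use the characterization, recalled earlier in the excerpt, that a $3$-graph is $L_G$-free if and only if the link of each of its vertices is $G$-free. So I would fix an arbitrary vertex $v\in[N]$ and show that the link graph $\Gamma_v$, which has vertex set $[N]\setminus\{v\}$, contains no copy of $G$.

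The key observation is that one should read off the coloring $\chi$ \emph{from the viewpoint of $v$}. Define $\psi_v\colon [N]\setminus\{v\}\to V(A)$ by $\psi_v(w)=\chi(v,w)$. I claim $\psi_v$ is a graph homomorphism from $\Gamma_v$ to $A$. Indeed, if $\{w,x\}$ is an edge of $\Gamma_v$, then $\{v,w,x\}$ is an edge of $\Gamma(\chi)$, i.e.\ an adjacent triangle of $\chi$; in particular the adjacency condition ``centered at $v$'', namely $\chi(v,w)\sim\chi(v,x)$ in $A$, holds, and this says precisely that $\psi_v(w)\sim\psi_v(x)$. (Only one of the three defining adjacencies of an adjacent triangle is needed here.) Equivalently, $\psi_v$ realizes $\Gamma_v$ as a subgraph of the blowup of $A$ in which the class of a vertex $a\in V(A)$ is $\psi_v^{-1}(a)$; this is the ``upshot'' advertised just before the lemma.

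Finally, suppose for contradiction that $\Gamma_v$ contains a copy of $G$, that is, there is an embedding $\iota\colon G\hookrightarrow\Gamma_v$, which is in particular a homomorphism. Then $\psi_v\circ\iota\colon G\to A$ is a graph homomorphism, contradicting the hypothesis that $A$ is $\hom(G)$-free. Hence $\Gamma_v$ is $G$-free; since $v\in[N]$ was arbitrary, every link of $\Gamma(\chi)$ is $G$-free, so $\Gamma(\chi)$ is $L_G$-free.

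I do not expect a genuine obstacle here: essentially all the content is the reformulation in the second paragraph, and the rest is a one-line contradiction. The only point deserving a sentence of care is that $A$ is loopless (being, morally, a subgraph of $G(m,p)$), so an edge of $\Gamma_v$ cannot be collapsed by $\psi_v$ to a single vertex of $A$; but this is automatic, since $a\sim a$ fails in a simple graph and the adjacency condition centered at $v$ therefore already forces $\chi(v,w)\neq\chi(v,x)$ whenever $\{w,x\}$ is an edge of $\Gamma_v$.
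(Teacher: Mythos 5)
Your proposal is correct and is essentially the paper's own argument: both take the map $w\mapsto\chi(v,w)$ on the vertices of a putative copy of $G$ in the link of $v$ and observe that the adjacency condition centered at $v$ makes it a homomorphism from $G$ to $A$, contradicting $\hom(G)$-freeness. The extra remarks about blowups and looplessness are fine but not needed.
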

\begin{proof}
Suppose $\Gamma(\chi)$ contains a copy of $L_G$, so some vertex $v\in [N]$ contains a copy of $G$ in its link. Let the vertices of this copy be $u_1,\ldots, u_s$. By the definition of $\Gamma(\chi)$, we know that $\chi(v, u_i)\sim \chi(v,u_j)$ if $u_i$ and $u_j$ represent adjacent vertices in $G$. Equivalently, we have shown that the map $\phi:\{u_1,\ldots, u_s\} \rightarrow V(A)$ which takes values $\phi(u_i)=\chi(v,u_i)$ is a graph homomorphism from $G$ to $A$. This contradicts our assumption that $A$ is $\hom(G)$-free.
\end{proof}

It remains to prove that with positive probability, $\overline{\Gamma(\chi)}$ contains no copy of $F$. We do this by proving that the probability a given copy of $F$ appears in $\overline{\Gamma(\chi)}$ is extremely small, and then taking the union bound over all possible copies.

It is worth comparing our construction $\Gamma(\chi)$ to the earlier construction of Conlon, Fox, and Sudakov~\cite{CoFoSu}. In our language, their construction is equivalent to considering the $3$-graph $\Gamma'(\chi)$ in which a triple $i<j<k$ is an edge if and only if the single condition $\chi(i,j)\sim\chi(i,k)$ holds. Unlike $\Gamma(\chi)$, $\Gamma'(\chi)$ has the property that the appearances of the edges whose leftmost vertex is $i$ are independent from the appearances of the edges whose leftmost vertex is $j$, whenever $i\ne j$. Thus, by this independence, the probability that $\overline{\Gamma'(\chi)}$ has a large clique can be expanded into a product over one leftmost vertex at a time, and the argument is straightforward from there. Because of the failure of this independence in our construction $\Gamma(\chi)$, we will need to use more delicate arguments to achieve the same type of bounds.

The lower bound proofs are organized as follows. In Section~\ref{sec:large-d}, we show that the lower bound in Theorem~\ref{thm:linkcliquelower} is true when $s\ge 14$. Then, in Section~\ref{sec:main}, we show Theorem~\ref{thm:oddgirthlowerbound}, which also proves the lower bound in Theorem~\ref{thm:linkcliquelower} for absolutely bounded $s$. Although both proofs use the machinery above, the former argument is much easier because $A$ is chosen to be a random graph, so in the analysis of $\Gamma(\chi)$ there are two independent sources of randomness: that of $A$ and that of $\chi$. For the latter proof, we will fix an $A$ and use entropy arguments instead of independence.

\section{The lower bound in Theorem~\ref{thm:linkcliquelower}}\label{sec:large-d}

The lower bound in Theorem \ref{thm:linkcliquelower} for $s\geq 14$ follows from Lemma~\ref{lem:lower-chromatic} below, which gives a bound on the Ramsey number of the link hypergraph $L_G$ of a graph $G$ of sufficiently large chromatic number versus any general dense $3$-graph $F$. Moreover, while the probabilistic construction is based on the same idea as that used to prove Theorem \ref{thm:oddgirthlowerbound}, the proof of the lower bound in Lemma~\ref{lem:lower-chromatic} is simpler. 

For $m\ge 2$, define $\phom(G, m)$ to be the largest value of $p\in [0,1]$ for which the Erd\H os-R\'enyi random graph $A=G(m,p)$ is $\hom(G)$-free with probability at least $\frac 12$. 

\begin{lem}\label{lem:lower-chromatic}
If $G$ is a nonempty graph, $m\ge 2$, and $p \le \phom(G,m)$, then for any $3$-graph $F$ on $n\ge 3$ vertices with  $\delta n^3$ edges,
\[
r(L_G, F) \ge \frac{1}{2} \min\Big\{(1-p^3)^{-n}, \frac{\delta m}{2e n^2}\Big\}^{\delta n/2}.
\]
\end{lem}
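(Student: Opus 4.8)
The plan is to use the probabilistic construction $\Gamma(\chi)$ from Section~\ref{sec:setup} with the auxiliary graph $A = G(m,p)$, conditioning on the (probability-$\ge 1/2$) event that $A$ is $\hom(G)$-free. By Lemma~\ref{lem:surj-free}, on this event $\Gamma(\chi)$ is automatically $L_G$-free, so it remains to show that with positive probability $\overline{\Gamma(\chi)}$ contains no copy of $F$; a union bound over the at most $N^n$ placements of the $n$ vertices of $F$ reduces this to estimating, for a fixed injection of $V(F)$ into $[N]$, the probability that every edge of $F$ is a \emph{non-edge} of $\Gamma(\chi)$. If this probability is at most $q^{\binom{n}{?}}$-type small, then any $N$ below $\big(1/q\big)^{1/n}$ (roughly) works, and optimizing gives the claimed bound; the factor $\tfrac12$ out front absorbs the conditioning on $A$.

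The core computation is the probability that a fixed triple $\{i,j,k\}$ fails to be an adjacent triangle, and more importantly the joint behavior of these events over the $\delta n^3$ edges of $F$. Here I would exploit the product structure of $\chi$: the three values $\chi(i,\cdot),\chi(j,\cdot),\chi(k,\cdot)$ relevant to the triple $\{i,j,k\}$ are determined by coordinates $(i,j),(i,k),(j,i),(j,k),(k,i),(k,j)$, and two triples of $F$ sharing at most one vertex use disjoint coordinate-sets except possibly for a shared vertex's two outgoing colors. The standard way to handle this, following the comparison with the Conlon--Fox--Sudakov construction discussed after Lemma~\ref{lem:surj-free}, is to reveal the coloring one ``left vertex'' at a time: order $V(F) = \{w_1,\dots,w_n\}$ and reveal $\chi(w_\ell,\cdot)$ in order $\ell = 1,2,\dots$. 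When we reveal $\chi(w_\ell,\cdot)$, we learn, for each edge $\{w_a,w_b,w_\ell\}$ of $F$ with $a,b<\ell$, whether the third adjacency condition $\chi(w_\ell,w_a)\sim\chi(w_\ell,w_b)$ holds; conditioned on everything revealed so far, this happens (for each such edge independently over the choice of the single vector $\chi(w_\ell,\cdot)$, after a short argument) with probability at most the edge density of $A$, which is concentrated near $p$. Since an edge of $F$ is an edge of $\Gamma$ only if all three of its adjacency conditions hold, and the ``last'' condition for each edge gets checked exactly once in this ordering, for $\overline{\Gamma}$ to contain the copy of $F$ we need: for each $\ell$, \emph{every} $F$-edge with top vertex $w_\ell$ fails at least one of its three conditions.

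The key step, and the one I expect to be the main obstacle, is turning ``each such edge fails with probability $\ge 1-p^3$, roughly independently'' into a clean product bound, because the failures of different edges sharing the same top vertex $w_\ell$ are \emph{not} independent (they all depend on the single random vector $\chi(w_\ell,\cdot)$). I would handle this by splitting on whether $m$ is ``large'' relative to the degree of $w_\ell$ in $F$: if $w_\ell$ has $d_\ell$ neighbors-pairs (i.e. $d_\ell$ edges of $F$ on top of it), then conditioned on the lower layers, the colors $\chi(w_\ell, w_a)$ for the relevant $a$ are i.i.d. uniform on $V(A)$, so the number of those $F$-edges whose third condition \emph{does} hold is stochastically dominated by a sum over pairs of an indicator of an $A$-adjacency — and one can bound $\Pr[\text{all } d_\ell \text{ edges fail}]$ either by $(1-p^3)^{d_\ell}$-type reasoning when things are independent enough, or by a crude $\le \binom{n}{2}/(\delta m)$-type bound coming from the expected number of colliding colors when $m$ is too small for the $(1-p^3)$ bound to be the binding one. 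This dichotomy is exactly the origin of the $\min\{(1-p^3)^{-n}, \delta m/(2en^2)\}$ in the statement, and the exponent $\delta n/2$ comes from the fact that $F$ has $\delta n^3$ edges so, on average over the $n$ top vertices, there are $\delta n^2$ edges per top vertex, of which a constant fraction (at least $\delta n /2$ worth, after discarding low-degree vertices) must all fail. Assembling: multiply the per-layer bounds, take the union bound over $\le N^n$ copies, and choose $N$ to make the product less than $1$; the stated lower bound on $r(L_G,F)$ falls out after routine simplification.
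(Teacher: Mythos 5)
Your overall framework matches the paper: use the construction $\Gamma(\chi)$ from Section~2 with $A = G(m,p)$, handle $L_G$-freeness via Lemma~\ref{lem:surj-free} and the definition of $\phom$, and take a union bound over the at most $N^n$ placements of $F$, choosing $N$ to make the total failure probability less than one. Also note that the paper does not condition on $A$ being $\hom(G)$-free (which would alter the distribution of $A$); it simply adds the two bad-event probabilities. Your conditioning would only cost a factor of $2$, so that is not itself fatal.

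The real gap is in the single-copy estimate, where the paper's Lemma~\ref{lem:F-free-chromatic} is doing the work. The paper does not reveal $\chi$ row by row with a top-vertex hierarchy — it explicitly abandons the Conlon--Fox--Sudakov per-layer-independence scheme, for the reasons discussed just after Lemma~\ref{lem:surj-free}. Instead it reveals all $n(n-1)$ relevant colors, sets up the event $E$ that fewer than $(1-\delta/2)n^2$ distinct colors appear (bounding $\Pr[E]$ by a counting argument which is the source of the $(2en^2/\delta m)^{\delta n^2/2}$ term), and observes that on $\neg E$ there are at least $\delta n^3/2$ edges $t$ of $F$ whose six colors are all distinct and disjoint from those of any other such edge. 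The events $X_t$ for these $t$ then concern pairwise disjoint triples of potential edges of the \emph{random} graph $A = G(m,p)$, so they are exactly mutually independent with $\Pr[X_t] = p^3$, giving $(1-p^3)^{\delta n^3/2}$. Your sketch misses this device. You treat $A$ as essentially fixed (``the edge density of $A$, which is concentrated near $p$''), and you correctly flag that for a fixed $A$ the indicators ``$\chi(w_\ell,w_a)\sim\chi(w_\ell,w_b)$'' over pairs sharing the top vertex $w_\ell$ are correlated because they evaluate a fixed graph at shared random vertices; but your proposed resolution (a dichotomy on $m$ versus $d_\ell$, plus the stochastic-domination claim) does not actually settle it. Indeed, for a fixed arbitrary $A$ the claim that the number of satisfied conditions is stochastically dominated by a sum of \emph{independent} indicators is false in general, and the correlations can be positive, inflating $\Pr[\text{all fail}]$ — exactly the wrong direction. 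The fix is to use the randomness of $A$ together with the disjointness of the relevant color-pairs, which is precisely what the paper's repeated-color dichotomy packages; the row-by-row organization you proposed is then unnecessary.
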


As in Section~\ref{sec:setup}, $\chi:[N]^2\rightarrow V(A)$ is chosen uniformly at random among all possible such functions, and $\Gamma(\chi)$ is the $3$-graph of adjacent triangles of $\chi$. We first bound the probability a particular copy of $F$ appears in $\overline{\Gamma(\chi)}$. Suppose the vertex set of $F$ is $[n]$, and $U = (u_1, \ldots, u_n)$ is a sequence of distinct vertices in $[N]$. Let $F_{U}$ be the event that the map $i\mapsto u_i$ is an embedding of $F$ into $\overline{\Gamma(\chi)}$. 

\begin{lem}\label{lem:F-free-chromatic}
Suppose $F$ is a $3$-graph on $n$ vertices with $\delta n^3$ edges and $A=G(m,p)$. For each sequence $U\in [N]^n$ of pairwise distinct vertices,
\[
\Pr[F_U] \le (1-p^3)^{\delta n^3/2} + \Big(\frac{2en^2}{\delta m}\Big)^{\delta n^2/2}. 
\]
\end{lem}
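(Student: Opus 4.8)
The plan is to split the event $F_U$ according to whether the random graph $A = G(m,p)$ is $\hom(G)$-free, and to handle the two sources of randomness (that of $A$ and that of $\chi$) essentially independently. Condition on $A$; there are two regimes. First, if $A$ happens to contain \emph{many} edges, then $\chi$ restricted to the relevant coordinates has a good chance of producing an adjacent triangle on some triple of $F$, which would violate $F_U$. Second, if $A$ is sparse (few edges), then for a fixed triple $\{i,j,k\}\in E(F)$ the probability that $\{u_i,u_j,u_k\}$ is \emph{not} an adjacent triangle can be bounded below, but crucially the non-appearance events for different edges of $F$ are not fully independent, so we will instead isolate a large sub-collection of edges of $F$ that are ``independent enough'' — edges no two of which share a vertex-pair after projecting through $\chi$. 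Actually the cleaner route, which I expect the authors to take, is: reveal $\chi$ one ordered pair at a time, and observe that whether a triple $i<j<k$ is an adjacent triangle depends on the three values $\chi(i,j),\chi(i,k)$, $\chi(j,i),\chi(j,k)$, $\chi(k,i),\chi(k,j)$, and these six coordinates are distinct across different triples as long as the triples are vertex-disjoint or share at most one vertex — giving enough independence to take a product bound over a linear-sized matching of edges in $F$.

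Concretely, the key steps are as follows. First I would set a threshold for the edge density of $A$: say $A$ is \emph{good} if $e(A) \ge \tfrac12 p\binom m2$ (roughly its expected value). By a Chernoff/second-moment estimate, $A$ fails to be good with probability at most something like $(\cdot)^{\,\delta n^2/2}$-small — this is where the second term $\big(2en^2/(\delta m)\big)^{\delta n^2/2}$ in the bound will come from, after optimizing constants; the point is that if $p$ is below $\phom(G,m)$ then $pm$ is not too small so $A$ has many edges with overwhelming probability relative to the scales we need. Second, condition on a good $A$ and also (using Lemma~\ref{lem:surj-free}'s hypothesis) on $A$ being $\hom(G)$-free; now estimate $\Pr[F_U \mid A]$. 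For each edge $\{i,j,k\}$ of $F$, the probability that the triple $\{u_i,u_j,u_k\}$ \emph{is} an adjacent triangle of $\chi$ is at least $(e(A)/\binom m2 \cdot \text{something})$; more simply, one shows $\Pr[\text{triple is an adjacent triangle}] \ge p^3$ when $A\sim G(m,p)$ unconditionally, or a comparable constant when conditioning on $A$ good. Third, extract from $F$ a collection of $\delta n$ edges that pairwise share at most one vertex (a linear-sized ``near-matching,'' which exists since $F$ has $\delta n^3$ edges on $n$ vertices by a greedy argument), so that the events ``this triple is an adjacent triangle'' are mutually independent across the collection; then $\Pr[F_U\mid A \text{ good}] \le (1-p^3)^{\delta n}$ or, after tracking the exact count of usable edges, $(1-p^3)^{\delta n^3/2}$ as stated. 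Finally, combine: $\Pr[F_U] \le \Pr[A \text{ not good}] + \Pr[F_U \mid A \text{ good}] \le \big(2en^2/(\delta m)\big)^{\delta n^2/2} + (1-p^3)^{\delta n^3/2}$.

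The main obstacle is the lack of independence among the adjacent-triangle events: unlike the Conlon–Fox–Sudakov construction $\Gamma'(\chi)$ (discussed right before Lemma~\ref{lem:surj-free}), here the event ``$\{u_i,u_j,u_k\}$ is an adjacent triangle'' depends on $\chi$-values at pairs like $(i,j)$ that are shared with other triples, so one cannot naively multiply probabilities over all $\delta n^3$ edges of $F$. The resolution is precisely the combinatorial extraction of a large sub-family of edges of $F$ whose ``link pairs'' are disjoint — and this is why the exponent in the final bound is $\delta n^3/2$ or $\delta n^2/2$ rather than $\delta n^3$: we pay a constant factor in the count of independent edges we can find. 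A secondary technical point is getting the constant in the Chernoff bound for $e(A)$ to line up with the exact shape $\big(2en^2/(\delta m)\big)^{\delta n^2/2}$; I would not grind through this but would note that a crude union bound over the $\le \binom m2$ possible edges, together with the binomial tail, suffices to produce a bound of the form $\big(Cn^2/(\delta m)\big)^{\,c\delta n^2}$, and then one just absorbs constants.
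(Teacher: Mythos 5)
Your proposal diverges from the paper's argument in two places, and both divergences are genuine gaps rather than alternative routes.

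First, the source of the second term. You derive $\bigl(2en^2/(\delta m)\bigr)^{\delta n^2/2}$ from a Chernoff bound on $e(A)$, i.e.\ from the event that $A$ is atypically sparse. But a tail bound on the edge count of $G(m,p)$ produces an error of the form $e^{-\Omega(pm^2)}$, which has the wrong shape entirely: the quantity $n^2/m$ appearing in the lemma is a \emph{collision} probability, not an edge-density deviation. In the paper's proof the exceptional event $E$ is that many of the $n(n-1)$ colors $\chi(i,j)$, $i,j\in U$, repeat: revealing the colors one at a time, each new color collides with an earlier one with probability at most $n^2/m$, and a union bound over the $\binom{n^2}{\delta n^2/2}<(2e/\delta)^{\delta n^2/2}$ choices of which $\delta n^2/2$ pairs are ``repeated'' gives exactly $\bigl(2en^2/(\delta m)\bigr)^{\delta n^2/2}$. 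Nothing about the density of $A$ enters here.

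Second, and more seriously, your mechanism for independence caps the exponent of the first term at $O(n^2)$, which is too weak. A family of triples of $[n]$ pairwise sharing at most one vertex is a partial Steiner system and has at most $\binom n2/3=O(n^2)$ members, so your near-matching argument can only yield $(1-p^3/8)^{O(n^2)}$ (and with the degraded constant $p^3/8$ coming from conditioning on $A$ good, since given a fixed $A$ the per-triple probability is $(2e(A)/m^2)^3$, not $p^3$). That bound does not imply $(1-p^3)^{\delta n^3/2}$, and tracing it through the proof of Lemma~\ref{lem:lower-chromatic} one finds it only gives $N$ of the form $e^{O(p^3 n)}$ rather than $M^{\delta n/2}$ with $M=(1-p^3)^{-n}$; the application fails. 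Note also that vertex-disjointness of the triples does \emph{not} by itself make the events independent when $A$ is random: two disjoint triples whose $\chi$-colors happen to coincide query the same edge of $A$ and are perfectly correlated. The paper's resolution is that the independence comes from the randomness of $A$, conditioned on the colors being distinct: once at most $\delta n^2/2$ pairs are repeated, at least $\delta n^3/2$ edges of $F$ (all but those meeting a repeated pair --- no Steiner restriction needed) have all six of their colors distinct, so their adjacency events query pairwise disjoint triples of potential edges of $G(m,p)$ and are exactly mutually independent with probability $p^3$ each. This is what produces the full exponent $\delta n^3/2$.
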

\begin{proof}
Without loss of generality, $U = (1,\ldots, n)$ and we may identify $U$ with the vertex set of $F$. The relevant values of $\chi$ are $\chi(i,j)$ for all $n(n-1)$ ordered pairs $(i,j)\in [n]^2$ with $i\ne j$. Imagine that the values of $\chi$ are randomly and uniformly picked sequentially in lexicographic order of $(i,j)$.

We first consider the event $E$ that at most $(1-\frac{\delta}{2}) n^2$ vertices of $A$ appear as colors in $\chi|_U$. If $E$ occurs, there is a set $R$ of $\frac{\delta}{2} n^2$ ``repeated edges" $(i,j)$ such that $\chi(i, j)$ has already appeared among the pairs colored earlier. There are at most ${n^2 \choose \frac{\delta}{2} n^2} < (2e/\delta)^{\delta n^2/2}$ ways of choosing such a set $R$. We will fix $R$ and bound the probability of the event $E_R$ that every edge in $R$ is a repeated edge.

For each $(i, j)\in R$, at most $n^2$ colors have already been used from $V(A)$, so the probability that $\chi(i,j)$ is one of these colors is at most $n^2/m$. Hence, 
\[
\Pr[E] < (2e/\delta)^{\delta n^2/2} \max_R\Pr[E_R] \le (2e/\delta)^{\delta n^2/2} (n^2/m)^{\delta n^2/2} = \Big(\frac{2en^2}{\delta m}\Big)^{\delta n^2/2}. 
\]

Next, we condition on $E$ not occurring and let $R$ be the set of repeated pairs, so $|R|\le \delta n^2/2$. Since each repeated pair $(i,j)$ lies in at most $n$ edges of $F$ and $F$ has $\delta n^3$ total edges, at least $\delta n^3/2$ edges of $F$ do not contain any repeated pair. Let $T_R$ denote the set of all such edges, so that $T_R$ is a subset of ${[n] \choose 3}$ of cardinality at least $\delta n^3/2$.

If $t=\{i,j,k\}\in {[n] \choose 3}$, let $X_t$ denote the event that $\{ i,j,k \}$ is an adjacent triangle in $\chi$. The key observation is that all the events $X_t$ for $t\in T_R$ are mutually independent, and $\Pr[X_t]=p^3$ for all such $t$. Indeed, observe that if $t=\{i, j, k\}$, then $X_t$ is just the conjunction
\[
X_t = (\chi(i, j)\sim \chi(i, k)) \wedge (\chi(j, i)\sim \chi(j, k)) \wedge  (\chi(k, i)\sim \chi(k, j)).
\]
Each of the three events above is the event that a particular edge appears in $A=G(m,p)$, and all six colors in $V(A)$ that appear are distinct since $t\in T_R$, so these three events refer to three distinct edges. Thus, $\Pr[X_t]=p^3$. Furthermore, all the $6|T_R|$ colors that appear on edges in some triple $t\in T_R$ must be distinct, since there are no repeated edges in triples of $T_R$. Thus, the events $X_t$ depend on pairwise disjoint triples of edges appearing in $G(m,p)$, so they are all mutually independent. Hence, 
\[
\Pr[F_U | \neg E] \le \Pr\Big[\bigwedge_{t\in T_R} (\neg X_{t}) \Big| |R| \le \delta n^2/2\Big] \le (1-p^3)^{\delta n^3/2}.
\]
In total, we have
\[
\Pr[F_U] \le \Pr[F_U | \neg E] + \Pr[E] \le (1-p^3)^{\delta n^3/2} + \Big(\frac{2en^2}{\delta m}\Big)^{\delta n^2/2},
\]
as desired.
\end{proof}

\noindent We can now prove the main lemma.

\begin{proof}[Proof of Lemma~\ref{lem:lower-chromatic}.]
We are now working under the assumptions of Lemma~\ref{lem:lower-chromatic}. That is, $m\ge 2$, $G$ is a graph on $m$  vertices, $p=\phom(G,m)$, $n \geq 3$, $F$ is a $3$-graph on $n$ vertices with at least $\delta n^3$ edges, and
\begin{equation}\label{eq:M-def}
M = \min\Big((1-p^3)^{-n}, \frac{\delta m}{2e n^2}\Big).
\end{equation}
Let $N = \frac{1}{2} M^{\delta n/2}$.

The setup is the same as in Section~\ref{sec:setup}. With $A=G(m,p)$, pick a uniform random coloring $\chi:[N]^2\rightarrow V(A)$, and define $\Gamma(\chi)$ to be the $3$-graph on $[N]$ whose edges are the adjacent triangles of $\chi$. By Lemma~\ref{lem:surj-free} and the definition of $\phom(G,m)$, the probability that $\Gamma(\chi)$ contains a copy of $L_G$ is at most $\frac{1}{2}$.

As before, we defined $F_U$ to be the indicator random variable of the event that a given ordered copy of $F$ appears in $\overline{\Gamma(\chi)}$. By Lemma~\ref{lem:F-free-chromatic}, 
\[
\Pr[F_{U}] \le (1-p^3)^{\delta n^3/2} + (2en^2/\delta m)^{\delta n^2/2}.
\]
Comparing with equation (\ref{eq:M-def}), this implies
\[
\Pr[F_U] \le 2M^{-\delta n^2/2}.
\]

There are less than $N^n$ total choices of $U$, so by the union bound the probability that $F$ appears in $\overline{\Gamma(\chi)}$ is less than
\[
N^n \cdot 2M^{-\delta n^2/2} \le \frac{1}{2}.
\]

This shows that with positive probability $\Gamma(\chi)$ is an $L_G$-free $3$-graph and $\overline{\Gamma(\chi)}$ contains no copies of $F$, as desired.
\end{proof}

To finish the proof of the lower bound in Theorem~\ref{thm:linkcliquelower}, it suffices to apply known results on the value of $\phom(G,m)$.

\begin{proof}[Proof of the lower bound in Theorem~\ref{thm:linkcliquelower}.]
If $s\le 13$, apply Theorem~\ref{thm:oddgirthlowerbound} (which we prove in the next section) with $g=3$ to obtain the desired lower bound. So we may assume $s\ge 14$.

For a clique $K_s$, being $\hom(K_s)$-free is equivalent to being $K_s$-free. Thus, the threshold $\phom(K_s,m)$ is just the threshold for containing an $s$-clique, one of the first threshold functions determined by the seminal work of Erd\H{o}s and R\'enyi~\cite{ErRe}. For our purposes, it is sufficient to use $\phom(K_s, m) \ge m^{-\frac{2}{s-1}}$, which follows from the union bound. 

We will apply Lemma~\ref{lem:lower-chromatic} with $F = K_{n,n,n}^{(3)}$ on $3n$ vertices and $\delta (3n)^3$ edges, where $\delta = \frac{1}{27}$. This gives
\[
r(L_{K_s}, K_{n,n,n}^{(3)}) \ge \frac{1}{2} \min \Big\{ (1-p^3)^{-3n}, \frac{m}{486en^2}\Big\}^{n/18},
\]
for any $m\ge 2$ and $p = m^{-\frac{2}{s-1}}$. Thus, it suffices to show that there exists $m$ for which
\[
    \min \Big\{ (1-p^3)^{-3n}, \frac{m}{486en^2}\Big\} \ge {n + s \choose s}^{\Omega(1)}.
\]

We prove this holds with $m={n + s \choose s}^{2/13}$. The calculation is carried out in Appendix~\ref{app:threshold}.
\end{proof}

\section{Proof of Theorem~\ref{thm:oddgirthlowerbound}}\label{sec:main}
We come to the most involved argument in this paper, which is the proof of Theorem~\ref{thm:oddgirthlowerbound}. 

For a $k$-graph $G$, we call a copy of the complete balanced $k$-partite $k$-graph $K_{t,\ldots, t}^{(k)}$ in $\overline{G}$ a {\it $k$-partite independent set} of $G$ of order $t$, and let $\alpha_k(G)$ be the largest $t$ such that $G$ has a $k$-partite independent set of order $t$.

Given $g\ge 3$ and $n$ large, the goal of this section is to give a probabilistic construction of a $3$-graph $\Gamma$ on $N=n^{\Omega(n/g)}$ vertices such that with positive probability the link of every vertex in $\Gamma$ has odd girth greater than $g$, and $\alpha_3(\overline{\Gamma}) < n$.

\subsection{The auxiliary graph $A$}\label{subsec:A-construction}
In this section we construct the auxiliary graph $A$ which has  girth greater than $g$ and $\alpha_2(A)$ small. Erd\H os showed that a graph with high girth and relatively small independence number can be obtained by deleting
vertices from all short cycles in an appropriate Erd\H os-R\'enyi
random graph. With a slight modification of his argument, this can be done to make $\alpha_{2}(A)$ small as well.
\begin{lem}
\label{lem:A-construction}For any $g\ge3$ and $m$ sufficiently
large in terms of $g$, there exists a graph $A$ of order $m$ with girth greater
than $g$ such that
\[
\alpha_{2}(A)<\frac{1}{16}m^{1-\frac{1}{3g}}.
\]
\end{lem}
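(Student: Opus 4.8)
The plan is to run Erd\H{o}s's classical deletion argument, but with the random-graph parameters tuned so that the surviving graph controls $\alpha_2$ rather than the ordinary independence number. Recall that $\alpha_2(A)\ge t$ means there are two \emph{disjoint} sets $S,T\subseteq V(A)$ with $|S|=|T|=t$ and no edge of $A$ between $S$ and $T$ (a copy of $K_{t,t}$ in $\overline A$). Fix $g\ge 3$, set $p=\tfrac1g m^{-1+1/g}$, and consider the random graph $G(2m,p)$. The goal is to show that with positive probability $G(2m,p)$ simultaneously has at most $m$ cycles of length $\le g$ and satisfies $\alpha_2(G(2m,p))<\tfrac1{16}m^{1-1/(3g)}$. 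Granting this, I would delete one vertex from each cycle of length at most $g$: this destroys all such cycles, leaves at least $2m-m=m$ vertices, and produces a graph of girth greater than $g$; restricting to any $m$ of these vertices yields $A$, and since $\alpha_2$ is monotone under passing to induced subgraphs (any $K_{t,t}\subseteq\overline{A}$ is a $K_{t,t}$ in the complement of the larger graph) and the girth cannot decrease under this operation, $A$ has the required properties.

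The first ingredient is the standard first-moment bound on short cycles. Writing $X$ for the number of cycles of length at most $g$ in $G(2m,p)$, the number of potential cycles of length $i$ is at most $(2m)^i/(2i)$, so $\E[X]\le\sum_{i=3}^g (2mp)^i/(2i)\le \tfrac g6(2mp)^g$, using that $2mp=\tfrac2g m^{1/g}\ge 1$ once $m\ge g^g$. Substituting $2mp=\tfrac2g m^{1/g}$ gives $\E[X]\le \tfrac{2^g}{6g^{\,g-1}}m\le \tfrac m2$, where the last inequality $2^g\le 3g^{\,g-1}$ holds for every $g\ge 3$. By Markov's inequality, $X\le m$ with probability at least $\tfrac12$.

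The second ingredient is a union bound for $\alpha_2$. Put $t=\tfrac1{16}m^{1-1/(3g)}$. The number of ordered pairs of disjoint $t$-subsets of $[2m]$ is at most $\binom{2m}{t}^2\le (2em/t)^{2t}$, and each such pair spans no edge of $G(2m,p)$ with probability $(1-p)^{t^2}\le e^{-pt^2}$, so
\[
\Pr\bigl[\alpha_2(G(2m,p))\ge t\bigr]\le \exp\bigl(t(2\ln(2em/t)-pt)\bigr).
\]
Here $pt=\tfrac1{16g}m^{2/(3g)}$ grows polynomially in $m$ while $2\ln(2em/t)=2\ln\!\bigl(32e\,m^{1/(3g)}\bigr)$ grows only logarithmically, so for $m$ large in terms of $g$ the bracket is sufficiently negative that this probability is at most $\tfrac14$. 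Combining the two bounds, with probability at least $1-\tfrac12-\tfrac14>0$ the graph $G(2m,p)$ has at most $m$ short cycles and $\alpha_2<t$, and the deletion step described above completes the construction.

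There is no serious obstacle here, only the mild arithmetic point that the two requirements on $p$ must be compatible: sparsity of short cycles forces $p\lesssim m^{-1+1/g}$, while forcing $\alpha_2<\tfrac1{16}m^{1-1/(3g)}$ requires roughly $p\gtrsim m^{-1+1/(3g)}\log m$, and these leave a polynomial factor $m^{2/(3g)}$ of slack --- exactly what drives the exponent in the $\alpha_2$ union bound to $-\infty$. The specific choices (starting vertex count $2m$, $p=\tfrac1g m^{-1+1/g}$, the constant $\tfrac1{16}$) are made so that all of the inequalities above hold uniformly for every $g\ge 3$ once $m$ is large enough in terms of $g$; the remaining work is routine bookkeeping with these constants.
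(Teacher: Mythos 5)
Your proof is correct and follows essentially the same route as the paper: the Erd\H{o}s deletion method applied to $G(2m,p)$ with $p=\Theta(m^{-1+1/g})$, a first-moment bound showing at most $m$ short cycles in expectation, and a union bound showing that bipartite independent sets of order $\tfrac1{16}m^{1-1/(3g)}$ are absent with high probability. The only cosmetic differences are your choice of constant in $p$ and that you handle the bipartite independent sets by showing none exist (rather than folding them into the deletion count), which changes nothing substantive.
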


\begin{proof}
We sample a random graph $G(2m,p)$ with $p=(4m)^{-1+\frac{1}{g}}$, and then delete a vertex from each cycle of length at most $g$ and from
each bipartite independent set of order $t=\frac{1}{16}m^{1-\frac{1}{3g}}$.
Let us compute the expected number of vertices deleted.

By linearity of expectation, in $G(2m,p)$ we have
\[
\mathbb{E}[\#\text{ cycles of length \ensuremath{\ell}}]\le\frac{(2m)^{\ell}}{2\ell}\cdot p^{\ell},
\]
since there are at most $(2m)^{\ell}/2\ell$ ways to choose a cycle
of length $\ell$ to appear, and each cycle of length $\ell$ appears
with probability $p^{\ell}$. It follows that
\[
\mathbb{E}[\#\text{ cycles of length at most \ensuremath{g}}]\le\sum_{\ell=3}^{g}\frac{(2m)^{\ell}}{2\ell}\cdot p^{\ell}\le \sum_{\ell=3}^{g}2^{-\ell}\ell^{-1}(2m)^{\ell/g}\le\frac{m}{2}.
\]
We thus expect to delete at most $m/2$ vertices to destroy all cycles of length at most $g$.

The total number of choices of disjoint sets $(U,W)$ with $|U|=|W|=t$ is at most
$\binom{m}{t}^2 \le 4^{m}$. For each such pair, the probability it induces a bipartite independent set
is $(1-p)^{t^2}$. Thus, by linearity of
expectation again,
\[
\mathbb{E}[\#\text{ bipartite independent sets of order \ensuremath{t}}]\le4^{m}(1-p)^{t^2}.
\]
Also, $1-p\le e^{-p}$ and $t^2=\frac{1}{256}m^{2-\frac{2}{3g}}$, so 
\[
4^{m}(1-p)^{t^2}\le4^{m}e^{-\frac{1}{256}pm^{2-\frac{2}{3g}}}\le4^{m}e^{-\frac{1}{1024}m^{1+\frac{1}{3g}}},
\]
which tends to zero as $m$ tends to infinity. For sufficiently large $m$, we need to delete at most $m$ vertices on average from $G(2m,p)$ to obtain a graph
with girth greater than $g$ and $\alpha_{2}(A)<\frac{1}{16}m^{1-\frac{1}{3g}}$. In particular, with positive probability we delete at most $m$ vertices,
so by deleting additional vertices if necessary we obtain a graph $A$ of order $m$.
\end{proof}

We will actually require an upper bound on the bipartite independence number of the tensor square of $A$, denoted $A^{2}$, which is the graph on $V(A)^{2}$ where
$(u_{1},u_{2})\sim(v_{1},v_{2})$ in $A^{2}$ if and only if $u_{1}\sim v_{1}$ and $u_{2}\sim v_{2}$ in $A$. There is a simple counting argument which relates $\alpha_2(A^{2})$ to $\alpha_2(A)$.

\begin{lem}\label{lem:tensor-square}
For any graph $A$ of order $m$, $\alpha_2(A^{2}) < 4 m(\alpha_2(A)+1)$.
\end{lem}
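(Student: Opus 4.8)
The plan is to show that a large bipartite independent set in $A^2$ forces a large bipartite independent set in $A$ by a pigeonhole/averaging argument on the coordinates. Suppose $(\mathcal{U}, \mathcal{W})$ is a bipartite independent set of $A^2$ with $|\mathcal{U}| = |\mathcal{W}| = t$; this means that for every $(u_1,u_2) \in \mathcal{U}$ and every $(w_1,w_2) \in \mathcal{W}$, it is \emph{not} the case that both $u_1 \sim w_1$ and $u_2 \sim w_2$ in $A$. In other words, for each such pair, at least one of the two ``coordinate non-adjacencies'' $u_1 \not\sim w_1$ or $u_2 \not\sim w_2$ holds (here I will interpret $u_1 \not\sim w_1$ to include the possibility $u_1 = w_1$, since $A$ has no loops). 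Writing $\pi_i$ for the projection onto the $i$-th coordinate, the goal is to extract from $\mathcal U$ and $\mathcal W$ large subsets whose projections to a single coordinate are disjoint and induce no edge of $A$.

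The key step is a two-stage refinement. First, since $\mathcal U \subseteq V(A)^2$ has size $t$, one of its two coordinate projections, say $\pi_1(\mathcal U)$, has size at least $\sqrt{t}$; pick $U' \subseteq \mathcal U$ of size $\lceil \sqrt t\rceil$ with distinct first coordinates, and let $U = \pi_1(U')$, so $|U| = |U'| \ge \sqrt t$ and the elements of $U$ are distinct vertices of $A$. Now for each $w = (w_1,w_2) \in \mathcal W$, consider the set $N_U(w) = \{\, u = (u_1,u_2) \in U' : u_1 \sim w_1 \text{ in } A\,\}$. If $|N_U(w)|$ were large for some $w$ — say at least $\alpha_2(A)+1$ — then since every $u \in N_U(w)$ must have $u_2 \not\sim w_2$, the $\ge \alpha_2(A)+1$ first-coordinate-distinct vertices $\{u_1\}$ would all be non-neighbors of $w_1$; but that alone does not immediately contradict anything, so the correct move is instead to bound how \emph{often} $w_1$ can be adjacent to many elements of $U$, and pass to the complementary regime.

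Here is the cleaner route I would actually take. Call $w \in \mathcal W$ \emph{bad} if $w_1 \in U$ or $w_1$ has a neighbor in $U$ lying in $\pi_1(U')$; more efficiently, for each vertex $x \in V(A)$ let $d_U(x)$ be the number of elements of $U$ equal to or adjacent to $x$. Since $A$ has girth $> g \ge 3$, in particular $A$ is triangle-free, so for any fixed $x$ the set of neighbors of $x$ in $U$ is an independent set, hence has size at most $\alpha_2(A)$ — wait, an independent set has no \emph{within} edges but $\alpha_2$ counts bipartite independent sets; using $\alpha_2(A) \ge \alpha(A)$ is wasteful but legal. Rather than optimize, I would split $\mathcal W$ according to the first coordinate: by pigeonhole a $1/\sqrt t$-fraction, i.e.\ at least $\sqrt t$ elements of $\mathcal W$, share a common first coordinate value $w_1^\star$, OR all first coordinates are distinct; in the ``distinct'' case set $W = \pi_1$ of a size-$\sqrt t$ subset $W'$ and note that for $u \in U', w\in W'$ with $u_1 \sim w_1$ we are forced into $u_2 \not\sim w_2$. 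The pair $(U, W)$ of distinct vertices of $A$ then has the property that the bipartite graph it induces in $A$, intersected with the ``second-coordinate agreement'' structure, is empty; iterating the coordinate pigeonhole once more on the second coordinates of those $u,w$ still adjacent in the first coordinate yields a genuine bipartite independent set of $A$ of size $\ge t/(\text{const})$, giving $t \le 4m(\alpha_2(A)+1)$ after accounting for the $\le m$ choices of the repeated coordinate value and the two square-root losses. I expect the main obstacle to be bookkeeping the case split cleanly — specifically handling the vertices where a coordinate value repeats (which is where the factor $m$, rather than another $\sqrt t$ loss, enters) versus where all coordinate values are distinct — and making sure the two independent-set extractions in the two coordinates are compatible so that the final pair is simultaneously coordinate-disjoint and edge-free in $A$.
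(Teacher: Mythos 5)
Your proposal has a genuine gap, and in fact the key pigeonhole step goes in the wrong direction. You pass to a subset $U'\subseteq\mathcal U$ with \emph{distinct} first coordinates, but what the argument actually needs are \emph{popular} first coordinates: vertices $u_1\in V(A)$ above which many elements $(u_1,\cdot)\in\mathcal U$ sit. Popularity is what lets you later extract, above a single fixed $u_1$, a large set of second coordinates; distinctness gives you nothing in the second coordinate. Relatedly, the correct pigeonhole threshold is $|\mathcal U|/(2m)$, not $\sqrt{|\mathcal U|}$: with $t\approx m(\alpha_2(A)+1)$ the quantity $\sqrt t$ can be far smaller than $\alpha_2(A)+1$, and two ``square-root losses'' would produce a set of size on the order of $t^{1/4}$, which is nowhere near enough to contradict $\alpha_2(A)$. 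Your dichotomy ``at least $\sqrt t$ elements of $\mathcal W$ share a first coordinate OR all first coordinates are distinct'' is also not a valid case split: the negation of the first alternative is merely that every value is hit fewer than $\sqrt t$ times. Finally, the appeal to girth and triangle-freeness is out of place here: the lemma is stated for an arbitrary graph $A$, and indeed taking $A=K_m$ shows $\alpha_2(A^2)=\Omega(m)$ while $\alpha_2(A)=0$, so the factor $m$ is intrinsic and cannot be obtained via an $A$-specific structural property.

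The intended argument is a two-stage ``popular coordinate'' extraction. Suppose $(\mathcal U,\mathcal W)$ is a bipartite independent set in $A^2$ with both parts of size $4m(\alpha_2(A)+1)$. Call $u_1\in V(A)$ \emph{popular for} $\mathcal U$ if it occurs as a first coordinate of at least $2(\alpha_2(A)+1)$ elements of $\mathcal U$; since unpopular first coordinates account for fewer than $2m(\alpha_2(A)+1)=\tfrac12|\mathcal U|$ elements, at least half of $\mathcal U$ lies above popular first coordinates, and since each such $u_1$ carries at most $m$ elements of $\mathcal U$, the set $U_1$ of popular first coordinates has $|U_1|\ge|\mathcal U|/(2m)=2(\alpha_2(A)+1)$. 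Do the same for $\mathcal W$ to get $W_1$. Extract disjoint $U_1'\subset U_1$, $W_1'\subset W_1$ of size $\alpha_2(A)+1$; by definition of $\alpha_2(A)$ some $u_1\in U_1'$ and $w_1\in W_1'$ are adjacent in $A$. Now look at the at least $2(\alpha_2(A)+1)$ second coordinates above $u_1$ in $\mathcal U$, and above $w_1$ in $\mathcal W$; repeat the disjoint-subset extraction and find an edge $u_2\sim w_2$ in $A$. Then $(u_1,u_2)\in\mathcal U$ and $(w_1,w_2)\in\mathcal W$ are adjacent in $A^2$, contradicting independence.
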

\begin{proof}
Suppose
that $A^{2}$ contains a large bipartite independent set induced on two parts $U,W\subseteq V(A)^2$
of order $4m(\alpha_2(A)+1)$ each. Let $U_{1}$ (respectively $W_{1}$) be the
set of vertices in $A$ which appear at least $2(\alpha_2(A)+1)$
times as the first coordinate of a vertex in $U$ (respectively $W$). Since
each $u_{1}\not\in U_{1}$ appears less than $2(\alpha_2(A)+1)$
times as a first coordinate in $U$, there are at most $2m(\alpha_2(A)+1)$
pairs $(u_{1},u_{2})\in U$ where $u_{1}\not\in U_{1}$. That is,
at least half of the vertices of $U$ have one of the ``popular'' first coordinates
$U_{1}$, and so $|U_{1}|\ge|U|/2m\ge2(\alpha_2(A)+1)$. Similarly, $|W_{1}|\ge2(\alpha_2(A)+1)$. 

From these sets $U_1, W_1$, it is possible to pick disjoint subsets $U_1 ' \subset U_1$, $W_1 ' \subset W_1$ of size $|U_1'|=|W_1'|=\alpha_2(A)+1$. By the definition of $\alpha_2(A)$, there must be an edge $(u_{1},w_{1})$ between these sets. Fix these two vertices $u_1, w_1$.

Next, define $U_{2}$ to be the set of all $u_{2}$ for which $(u_{1},u_{2})\in U$, and define $W_2$ similarly for $W$.
By the definitions of $U_{1}$ and $W_{1}$, we have $|U_{2}|,|W_{2}|\ge2(\alpha_2(A)+1)$. Again, we can find disjoint subsets $U_2 ' \subset U_2$, $W_2 ' \subset W_2$ of size $|U_2'|=|W_2'|=\alpha_2(A)+1$, whereby there must be an edge $(u_2,w_2)$ between $U_2$ and $W_2$ as well.

It follows that $(u_{1},u_{2})\sim(w_{1},w_{2})$ is an edge between $U$
and $W$ in $A^{ 2}$, which contradicts the fact that $(U,W)$ induces a bipartite independent set.
\end{proof}

For convenience, we make an observation about $\alpha_2(A)$. 
When $U\subseteq V(A)$, define $\overline N(U)$ to be the set of all vertices of $A$ with no edges to vertices of $U$, so that $\overline N(U)$ is allowed to include vertices of $U$ itself.

\begin{lem}\label{lem:nonneighborhood}
For any graph $A$ and any vertex subset $U\subseteq V(A)$, if $|U| > \alpha_2(A)$ then $|\overline N(U)| \le 2 \alpha_2(A) + 1$.
\end{lem}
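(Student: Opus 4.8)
The plan is to argue by contradiction: assume $|\overline N(U)| \ge 2\alpha_2(A)+2$ and build a bipartite independent set of order $\alpha_2(A)+1$, contradicting the definition of $\alpha_2(A)$. The only subtlety is that $\overline N(U)$ may overlap $U$, so I will have to pass to disjoint subsets.

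First, since $|U| > \alpha_2(A)$, pick any subset $U' \subseteq U$ with $|U'| = \alpha_2(A)+1$. Next, I want to pick $W' \subseteq \overline N(U)$ disjoint from $U'$ with $|W'| = \alpha_2(A)+1$. Writing $W = \overline N(U)$, we have $|W \setminus U'| \ge |W| - |U'| \ge (2\alpha_2(A)+2) - (\alpha_2(A)+1) = \alpha_2(A)+1$, so such a $W'$ exists. By construction $U'$ and $W'$ are disjoint, and every vertex of $W' \subseteq \overline N(U)$ has no edge to any vertex of $U$, hence none to $U' \subseteq U$; thus there are no edges between $U'$ and $W'$.

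This means $(U', W')$ induces a bipartite independent set of order $\alpha_2(A)+1 > \alpha_2(A)$, which is impossible. Therefore $|\overline N(U)| \le 2\alpha_2(A)+1$. I expect no real obstacle here; the argument is a short pigeonhole/counting step, and the only point requiring care is the bookkeeping on the overlap $U' \cap \overline N(U)$ when extracting the disjoint pair.
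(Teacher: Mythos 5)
Your proof is correct and follows essentially the same approach as the paper: pass to a subset $U'\subseteq U$ of size $\alpha_2(A)+1$, note that $\overline N(U)\setminus U'$ is still large enough to extract a disjoint $W'$ of the same size, and observe that $(U',W')$ is a bipartite independent set contradicting the definition of $\alpha_2(A)$.
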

\begin{proof}
If not, let $U$ be a vertex subset for which $|U| \ge \alpha_2(A) + 1$ and $|\overline N(U)| \ge 2 \alpha_2(A) + 2$, and let $U'$ be some subset of $U$ of order $\alpha_2(A) + 1$. Since $|\overline N(U) \setminus U'| \ge \alpha_2(A) + 1$, there must exist a set $W\subseteq \overline N(U)$ disjoint from $U'$ of order $\alpha_2(A) + 1$ which has no edges to $U'$. Thus, $(U',W)$ is a bipartite independent set of order $\alpha_2(A)+1$, contradicting the definition of $\alpha_2(A)$.
\end{proof}

Henceforth, fix $m = n^{1/6}$, and $A$ will refer exclusively to the graph constructed in
Lemma~\ref{lem:A-construction} of this order $m$. In fact, we will only need the following three properties of $A$, which follow from Lemmas~\ref{lem:A-construction} through~\ref{lem:nonneighborhood}.

\begin{enumerate}
    \item The odd girth of $A$ is greater than $g$.
    \item If $U\subseteq V(A)$ and $|U| \ge m^{1-\frac{1}{3g}}$, then $|\overline N(U)| < m^{1-\frac{1}{3g}}$.
    \item If $U\subseteq V(A^{ 2})$ and $|U| \ge m^{2-\frac{1}{3g}}$, then $|\overline N(U)| < m^{2-\frac{1}{3g}}$.
\end{enumerate}

\subsection{Triangle-free colorings\label{subsec:triangle-free-colorings}}
Let $K_{n,n,n}^*$ be the complete tripartite directed graph (digraph), which has an edge in both directions between each pair of vertices in distinct parts. A {\it triangle-free coloring} of $K_{n,n,n}^*$ is a map $\chi: E(K_{n,n,n}^*)\rightarrow V(A)$ with no adjacent triangles. Let $\F$ be the family of all triangle-free colorings of $K_{n,n,n}^*$. The total number of colorings is $m^{6n^{2}}$, and our main lemma
is that the number of triangle-free colorings is much smaller.
\begin{lem}
\label{lem:main} There exists an absolute constant $a>0$ such that if $m=n^{\frac{1}{6}}$ and $n$ is sufficiently large in terms of $g$, then $|\F|\le m^{\left(6-\frac{a}{g}\right)n^{2}}$.
\end{lem}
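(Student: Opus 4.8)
The plan is to bound $|\F|$ by revealing the colors $\chi(i,j)$ one ordered pair at a time, in a carefully chosen order, and at each step counting how many color choices keep the partial coloring triangle-free. Since the total number of colorings is $m^{6n^2}$ and each of the $6n^2$ edges of $K_{n,n,n}^*$ has $m$ possible colors, it suffices to show that for a positive fraction (about $\frac{a}{6g}$) of these $6n^2$ edges, the number of legal colors at that step is at most $m^{1 - c/g}$ for some constant $c>0$; multiplying the per-step bounds then gives the claim. So the real content is: in the right revealing order, \emph{many} edges are "constrained," meaning that conditioned on the colors revealed so far, most colors in $V(A)$ would complete an adjacent triangle and hence are forbidden.

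First I would fix the revealing order to be by vertices: process the $3n$ vertices of $K_{n,n,n}^*$ in some order $v_1, v_2, \ldots, v_{3n}$, and when we reach $v_\ell$, reveal all the colors $\chi(v_\ell, v_t)$ and $\chi(v_t, v_\ell)$ for $t < \ell$ (i.e.\ all edges between $v_\ell$ and previously processed vertices, in both directions). The key point, following the remark after Lemma~\ref{lem:surj-free}, is that the link of a vertex $w$ in $\Gamma(\chi)$ is (a subgraph of) the pullback under $i \mapsto \chi(w,i)$ of the graph $A$; more precisely, a triple $\{w, i, j\}$ is an adjacent triangle iff $\chi(w,i)\sim\chi(w,j)$ \emph{and} $\chi(i,w)\sim\chi(i,j)$ \emph{and} $\chi(j,w)\sim\chi(j,i)$. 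When we are about to reveal $\chi(v_\ell, v_t)$, the colors $\chi(v_t, v_\ell)$ and $\chi(v_t, v_s)$ for $s < \ell$ are already known; if $\chi(v_t,v_\ell) \sim \chi(v_t,v_s)$ in $A$ for some already-processed $v_s$ with $\chi(v_\ell, v_s)$ known and with $\chi(v_s, v_\ell)\sim\chi(v_s,v_t)$, then the color $\chi(v_\ell,v_t)$ is forbidden from being adjacent to $\chi(v_\ell, v_s)$. Summing over the relevant $s$, the set of colors that are adjacent (in $A$) to $\{\chi(v_\ell, v_s) : s \in S\}$ for the appropriate "witness set" $S$ is forbidden, so the number of legal colors is at most $|\overline N(\{\chi(v_\ell,v_s) : s\in S\})|$. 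By property (2) of $A$ (derived from Lemmas~\ref{lem:A-construction}--\ref{lem:nonneighborhood}), as soon as this color set has size $\ge m^{1-\frac{1}{3g}}$, its non-neighborhood, hence the number of legal colors, is $< m^{1-\frac{1}{3g}}$, giving a savings of $m^{-\frac{1}{3g}}$ on that edge.

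The crux is therefore a \emph{counting / pigeonhole argument} showing that for a constant fraction of the ordered pairs $(v_\ell, v_t)$, the witness set $S$ of indices $s$ for which $\chi(v_\ell, v_s)$ is already revealed and the triangle-closing conditions at $v_t, v_s$ hold has its associated color set of size $\ge m^{1-\frac{1}{3g}}$ — i.e.\ the revealed colors on edges at $v_\ell$ are "spread out" in $V(A)$ and the pair $(v_t, \cdot)$ edges are "spread out" in $A^2$. This is where property (3) (the bipartite independence number bound for the tensor square $A^2$, via Lemma~\ref{lem:tensor-square}) enters: the pair $(\chi(v_t,v_\ell), \chi(v_t, v_s))$ lives in $V(A)^2$, and we need that it lies in many edges of $A^2$, which fails only if the coordinates are confined to a small bipartite-independent-type set. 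I expect the main obstacle to be making this spreading argument quantitatively honest: we need to handle the conditioning correctly (the bound on legal colors is conditional on \emph{all} earlier choices, and the witness set $S$ itself depends on those choices), and we need to show that "bad" configurations — where too few edges are constrained — are themselves so rare that even counted with the trivial bound $m$ per edge they contribute a negligible fraction. Concretely, I would split the edge set of $K_{n,n,n}^*$ into "good" edges (where the witness color set has size $\ge m^{1-\frac{1}{3g}}$, contributing factor $\le m^{1-\frac{1}{3g}}$) and "bad" edges (contributing factor $\le m$), argue via properties (2) and (3) of $A$ that in any triangle-free coloring at least an $\Omega(1/g)$-fraction of the $6n^2$ edges must be good — since otherwise one could locate, inside the coloring, a structure that $A$ or $A^2$ forbids — and conclude $|\F| \le m^{(1-\frac{1}{3g})\cdot \Omega(n^2/g)} \cdot m^{6n^2 - \Omega(n^2/g)} = m^{(6 - \frac{a}{g})n^2}$ for a suitable absolute constant $a>0$.
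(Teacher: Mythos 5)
Your high-level intuition is right — the savings must come from property (2) of $A$ forcing most colors to be illegal on many edges — and you correctly identify where the difficulty sits. But the step you defer ("argue via properties (2) and (3) of $A$ that in any triangle-free coloring at least an $\Omega(1/g)$-fraction of the $6n^2$ edges must be good") is the entire content of the lemma, and in the pointwise form you state it, it is false. Take $\chi$ to be the constant coloring by a single vertex $u_0$ of $A$: since $u_0\not\sim u_0$, this coloring is triangle-free, yet every witness set is empty and no edge is ever constrained. So there is no hope of locating "a structure that $A$ or $A^2$ forbids" inside an arbitrary triangle-free coloring; the argument must instead show that colorings with few constrained edges are themselves few, i.e.\ it must trade sparseness of the witness structure against entropy of the coloring. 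Your proposal names this trade-off but does not carry it out, and carrying it out is where essentially all of the work in the paper's proof lives.

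Concretely, two things are missing. First, in a sequential reveal the witness set at each step depends on all earlier choices, so you cannot multiply per-step bounds without first partitioning the family of colorings according to which steps are good; the number of cells of that partition, and the size of each cell, must both be controlled. The paper does this by an explicit decomposition into low-entropy families $\F_1,\F_2,\F_3$ (Lemma~\ref{lem:family-counting}) together with an invariant $\iota$ that separates $\F\setminus\F_1$ from $\F\setminus(\F_2\cup\F_3)$. Second, even after discarding colorings whose colors are concentrated (which is what properties (2) and (3) can punish directly, via $\F_2$ and $\F_3$), one only learns that the \emph{potential} constraint graphs $G_i^*(\chi)$ are dense; the \emph{actual} constraint graph $G_i(\chi)$ for a specific $\chi$ may still be sparse or structured so that the edges out of $i$ admit close to $m^{2n}$ legal recolorings. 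The paper resolves this by resampling $\chi$ on $E_3$ so that $G_i(\chi')$ becomes a genuinely random subgraph of $G_i^*(\chi)$ (Lemma~\ref{lem:random-subgraph}), extracting an $m$-good subgraph by a Chernoff bound (Lemma~\ref{lem:m-good-sub}), and bounding homomorphism counts into $\overline A$ via Shearer's entropy inequality (Lemmas~\ref{lem:Shearer} and~\ref{lem:good-low-hom}). None of these ingredients, or substitutes for them, appear in your outline, so the proposal as written has a genuine gap at its central step.
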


The proof of Lemma~\ref{lem:main} is motivated by entropy counting methods (see the excellent survey of Galvin~\cite{Ga}), in particular Shearer's entropy inequality which is our Lemma~\ref{lem:Shearer} on page~\pageref{lem:Shearer} below. Shearer's inequality states that the cardinality of a finite set $S\subseteq \mathbb{Z}^n$ is bounded above by an appropriate product of the cardinalities of its projections to subspaces of $\mathbb{Z}^n$. Roughly speaking, we treat the family $\F$ of triangle-free colorings as a set of points in $\mathbb{Z}^{6n^2}$. We will show that $\F$ is small by breaking $\F$ into several subfamilies, each of which has small projection to some subspace of $\mathbb{Z}^{6n^2}$.

We first prove that Lemma~\ref{lem:main} implies Theorem~\ref{thm:oddgirthlowerbound}. The rest of the section is then devoted to proving Lemma~\ref{lem:main}.

\vspace{3mm}

\noindent {\it Proof that Lemma~\ref{lem:main} implies Theorem~\ref{thm:oddgirthlowerbound}.} 
Let $a$ be the constant in Lemma~\ref{lem:main}, and take $N = m^{\frac{a}{3g}n}=n^{\frac{a}{18g}n}$.

Recall
the $3$-graph $\Gamma(\chi)$ constructed in Section
\ref{sec:setup}, whose vertex set is $[N]$. We picked a uniform random function $\chi:[N]^{2}\rightarrow V(A)$,
and a triple $(i,j,k)$ appears as an edge of $\Gamma(\chi)$ when it forms an adjacent triangle of $\chi$.

Let $\ell$ be any odd integer between $3$ and $g$ inclusive. Since $A$ has odd girth greater than $g$, we know that $A$ is $\hom(C_\ell)$-free, so Lemma~\ref{lem:surj-free} shows that $\Gamma(\chi)$ is $L_{C_{\ell}}$-free. Thus, the link of each vertex in $\Gamma(\chi)$ has odd girth greater than $g$.

It remains to show that with positive probability, $\alpha_3(\Gamma(\chi)) < n$. We bound the probability that a fixed copy of $K_{n,n,n}^{(3)}$
induced by three disjoint sets $I,J,K\subseteq[N]$ appears in $\overline{\Gamma(\chi)}$. The event $X_{I,J,K}$ that none of the edges $(i,j,k)\in I\times J\times K$ appears
in $\Gamma(\chi)$ is exactly the event that the coloring $\chi$
contains no adjacent triangles $(i,j,k)\in I\times J \times K$. By Lemma~\ref{lem:main}, $\Pr[X_{I,J,K}] \le m^{-\frac{a}{g}n^2}$ for an absolute constant $a>0$.

There are fewer than $N^{3n}$ choices of disjoint sets $I, J, K \in {[N]\choose n}$. Taking a union bound over all these choices,
\[
\Pr[\overline{\Gamma(\chi)}\text{ contains a copy of }K_{n,n,n}^{(3)}]<N^{3n}m^{-\frac{a}{g}n^{2}} = 1.
\]
With positive
probability, $\overline{\Gamma(\chi)}$ contains no copy of $K_{n,n,n}^{(3)}$. In particular, there exists a $\chi$ for which $\alpha_3(\Gamma(\chi)) < n$.
This proves the theorem. \qed

\subsection{Low-Entropy Families \label{subsec:low-entropy-families}}

In this section we prepare for the proof of Lemma~\ref{lem:main} and sketch its main ideas. 

Let $E=E(K_{n,n,n}^{*})$. Given $\chi\in\F$ and a set of
edges $S\subseteq E$, define the family of recolorings of $\chi$ on $S$,
denoted $\chi^{*}(S)$, to be the set of all $\chi'\in\F$
with the property that $\chi|_{E\backslash S}=\chi'|_{E\backslash S}$.
In other words, $\chi^{*}(S)$ is the set of different ways to
recolor $\chi$ on the edges in $S$ while preserving the triangle-free property
of $\chi$.

Partition the set $E$ into three subsets $E_{1}$, $E_{2}$, $E_{3}$
of size $2n^{2}$, where $E_{1}=I\times(J\cup K)$ is the set of edges
out of $I$, $E_{2}=(J\cup K)\times I$ is the set of edges into $I$,
and $E_{3}=(J\times K)\cup(K\times J)$ is the set of edges between
$J$ and $K$. We define three families $\F_{1},\F_{2},\F_{3}\subseteq\F$. The set $\F_{i}$ will be the set of colorings
$\chi$ which are ``low-entropy'' on $E_{i}$. Loosely speaking, this means that if we think of $\F$ as a finite set in $\mathbb{Z}^{6n^2}$, $\F_i$ is chosen so that its projection onto the subspace indexed by $E_i$ is small.

Specifically, let $c=10^{-4}$, 
\begin{align*}
\F_{1} & = \{\chi\in\F:|\chi^{*}(E_{1})|\le m^{\left(2-\frac{c^2}{7g}\right)n^{2}}\},~\textrm{and}\\
\F_{3} & = \{\chi\in\F:|\chi^{*}(E_{3})|\le m^{\left(2-\frac{c^2}{7g}\right)n^{2}}\},
\end{align*}
so $\F_{1},\F_{3}$ are low-entropy in that they have few recolorings on the corresponding edge sets.

We define $\F_{2}$ using a different notion
of low-entropy. Let $\F_{2}\subseteq\F$
be the family of $\chi\in\F$ such that for each of $cn^{2}$
pairs $(j,k)\in J\times K$, there is a set $U_{j,k}\subseteq V(A)^{2}$
of size less than $m^{2-\frac{1}{3g}}$ such that
\[
|\{i\in I:(\chi(j,i),\chi(k,i))\in U_{j,k}\}|\ge cn.
\]
The family $\F_2$ is low-entropy in the sense that there are many triples $(i,j,k)$ for which the values of $(\chi(j,i), \chi(k,i))$ lie in a small set.

Our first lemma is that all three low-entropy families are small.
\begin{lem}
\label{lem:family-counting}For $m$ sufficiently
large,
\begin{equation}\label{eq:F-bound}
|\F_{i}| \le m^{\left(6-\frac{c^2}{7g}\right)n^2} 
\end{equation}
for each $i=1,2,3$.
\end{lem}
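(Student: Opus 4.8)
The plan is to treat the three families separately; the bounds for $\F_1$ and $\F_3$ are immediate, while $\F_2$ needs one extra combinatorial observation. For $\F_1$, I would partition $\F$ according to the restriction $\rho:=\chi|_{E\setminus E_1}$. There are at most $m^{|E\setminus E_1|}=m^{4n^2}$ possible restrictions, and for each the set of $\chi\in\F$ with $\chi|_{E\setminus E_1}=\rho$ is exactly $\chi^*(E_1)$, so its cardinality depends only on $\rho$; hence either every extension of $\rho$ lies in $\F_1$ — which happens precisely when that cardinality is at most $m^{(2-c^2/(7g))n^2}$ — or none does. Summing the fiber sizes over the ``small'' restrictions gives $|\F_1|\le m^{4n^2}\cdot m^{(2-c^2/(7g))n^2}=m^{(6-c^2/(7g))n^2}$, and the same argument with $E_3$ in place of $E_1$ bounds $|\F_3|$.

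For $\F_2$ the first step is to fix the witnessing set of pairs. For each $\chi\in\F_2$, choose $P\subseteq J\times K$ with $|P|=cn^2$ such that every $(j,k)\in P$ has the property in the definition of $\F_2$; there are at most $\binom{n^2}{cn^2}\le 2^{n^2}=m^{o(n^2)}$ choices for $P$, so it suffices to bound, for each fixed $P$, the number of $\chi\in\F$ for which every $(j,k)\in P$ has this property. Call a sequence $\vec w=(w_i)_{i\in I}\in(V(A)^2)^{I}$ \emph{concentrated} if there is a set $U\subseteq V(A)^2$ with $|U|<m^{2-1/(3g)}$ and $|\{i\in I:w_i\in U\}|\ge cn$; then the condition on $(j,k)$ is exactly that $\vec w_{j,k}:=(\chi(j,i),\chi(k,i))_{i\in I}$ is concentrated. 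Since this involves only $\chi|_{E_2}$, and $\chi|_{E_1\cup E_3}$ ranges freely over $m^{4n^2}$ values, it remains to show that the number of choices of $\chi|_{E_2}$ making $\vec w_{j,k}$ concentrated for all $(j,k)\in P$ is at most $m^{(2-\Omega(1/g))n^2}$.

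Two ingredients finish this. First, a crude count: a concentrated sequence is specified by its set $U$ (at most $\binom{m^2}{m^{2-1/(3g)}}=m^{o(n)}$ choices, since $m^{2-1/(3g)}\le m^2=n^{1/3}=o(n)$), a set of $cn$ distinguished coordinates lying in $U$ (at most $\binom{n}{cn}=m^{o(n)}$ choices), the values there (at most $(m^{2-1/(3g)})^{cn}$), and the remaining values (at most $(m^2)^{n-cn}$), so there are at most $m^{2n-cn/(3g)+o(n)}$ concentrated sequences. Second, a large matching: by K\H{o}nig's theorem the matching number of $P$ equals its minimum vertex cover, which is at least $|P|/n=cn$ since each vertex of $P$ has degree at most $n$; fix a matching $(j_1,k_1),\dots,(j_\nu,k_\nu)$ in $P$ with $\nu\ge cn$. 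As the $j_\ell$ are pairwise distinct and the $k_\ell$ are pairwise distinct, the coordinate sets of the sequences $\vec w_{j_\ell,k_\ell}$ are pairwise disjoint blocks of $2n$ coordinates inside $E_2$, so ``$\vec w_{j_\ell,k_\ell}$ concentrated for all $\ell$'' is a conjunction of $\nu$ constraints on disjoint variable blocks. Hence the number of admissible $\chi|_{E_2}$ is at most $(m^{2n-cn/(3g)+o(n)})^\nu\cdot m^{2n^2-2\nu n}=m^{2n^2-\nu cn/(3g)+o(n^2)}\le m^{2n^2-c^2n^2/(3g)+o(n^2)}$. Multiplying by $m^{4n^2}$ and by the $m^{o(n^2)}$ choices of $P$ gives $|\F_2|\le m^{6n^2-c^2n^2/(3g)+o(n^2)}$, which is at most $m^{(6-c^2/(7g))n^2}$ once $n$ is large in terms of $g$, since $\tfrac{c^2}{3g}-\tfrac{c^2}{7g}>0$ absorbs all the $o(n^2)$ losses.

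The only real difficulty is the $\F_2$ bound, and specifically the step where one is \emph{not} allowed to record the witnessing sets $U_{j,k}$ (or the index sets $I_{j,k}$): even a near-optimal encoding of a single such set costs $m^{\Theta(n)}$, so recording all $cn^2$ of them would cost $m^{\omega(n^2)}$ and destroy the bound. Passing to a matching is exactly the device that turns the $cn^2$ overlapping, individually-expensive-to-witness constraints into $\Theta(n)$ genuinely independent ones, each chargeable against the global count of concentrated sequences with no further bookkeeping. I expect nothing else in the lemma to be delicate; in particular the three structural properties of $A$ are not needed here, entering only in the decomposition $\F=\F_1\cup\F_2\cup\F_3$ proved in Lemma~\ref{lem:main}.
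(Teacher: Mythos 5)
Your proof is correct and is essentially the paper's own argument: the $\F_1$ and $\F_3$ bounds come from fibering over the restriction to $E\setminus E_i$, and the $\F_2$ bound comes from passing to a $\Theta(n)$-matching of witnessing pairs so that the constraints act on pairwise disjoint $2n$-edge blocks of $E_2$ (you record $P$ and then extract a matching via K\H{o}nig, whereas the paper records the matching $M$ together with the witnessing families $\mathcal U$, $\mathcal I$ directly; the two encodings are equivalent and give the same $m^{(6-\Omega(1/g))n^2}$ bound). One small correction to the heuristic in your closing paragraph: recording a single $U_{j,k}$ or $I_{j,k}$ actually costs only $m^{o(n)}$, not $m^{\Theta(n)}$ — indeed the paper does record these for the $\Theta(n)$ matching pairs at a total cost of $m^{o(n^2)}$; what is unaffordable is recording them for all $cn^2$ pairs, which costs $m^{\Theta(n^3/\log n)}$, and the matching is precisely what lets you charge the remaining constraints against the global count of ``concentrated'' sequences instead.
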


\begin{proof}
To show (\ref{eq:F-bound}) for $\F_1$, note that there are at most $m^{4n^{2}}$
ways to choose the values of a coloring on $E_{2}\cup E_{3}$, and
at most $m^{\left(2-\frac{c^2}{7g}\right)n^{2}}$ ways to extend such a coloring to $E_{1}$
to a coloring in $\F_{1}$. Thus,
\[
|\F_1|\le m^{\left(6-\frac{c^2}{7g}\right)n^2}.
\]
The same argument with $E_1$ and $E_3$ swapped proves (\ref{eq:F-bound}) for $\F_3$.

It remains to prove the lemma for $\F_2$. Among the $cn^{2}$ pairs
$(j,k)\in E_{2}$ for which $U_{j,k}$ exists, there must be a matching
$\{(j_{1},k_{1}),\ldots,(j_{cn/2},k_{cn/2})\}$ of size $cn/2$, so
that the values of $j_{i}$ are all distinct and the values of $k_{i}$ are all distinct. Therefore, for any $\chi\in \F_2$, there exists a $(cn/2)$-matching $M = \{(j_{1},k_{1}),\ldots,(j_{cn/2},k_{cn/2})\}$, a family $\mathcal U$ of sets $U_1,\ldots, U_{cn/2}\subset V(A)^2$ each of size at most $m^{2-\frac{1}{3g}}$, and a family $\mathcal I$ of sets $I_1,\ldots, I_{cn/2}\subset I$, each of size $cn$, for which $(\chi(j_t, i),\chi(k_t, i))\in U_t$ for all $t=1,\ldots,cn/2$ and all $i\in I_t$. We let $\F_2(M, \mathcal U, \mathcal I)$ be the family of all such $\chi$ for a particular choice of $(M, \mathcal U, \mathcal I)$, so that
\[
\F_2 \subseteq \bigcup_{(M,\mathcal U, \mathcal I)} \F_2(M,\mathcal U, \mathcal I).
\]

The number of choices of the matching $M$ is at most $(n^{cn/2})^2=n^{cn}$. The number of choices of each $U_t$ is at most $2^{m^2}$. The number of choices of each $I_t$ is at most ${n \choose cn} \le 2^n$. Altogether, there are at most
\[
n^{cn} \cdot (2^{m^2})^{cn/2} \cdot (2^{n})^{cn/2} = m^{o(n^2)}
\]
choices of $(M, \mathcal U, \mathcal I)$.

Once the above choices are made, we just need to bound the size of $\F_2(M,\mathcal U, \mathcal I)$. The number of ways to color all the edges $\chi(j_t,i)$ and $\chi(k_t,i)$
for a particular pair $(j_t,k_t)\in M$ is at most
\[
\left(m^{2-\frac{1}{3g}}\right)^{cn}\cdot \left(m^2\right)^{n-cn}=m^{\left(2-\frac{c}{3g}\right)n},
\]
since we must use one of the choices in $U_t$ when $i\in I_t$ and there are at most $m^2$ choices otherwise. Multiplying over the $cn/2$ values of $t$, this makes for a total of at most 
\[
\left(m^{\left(2-\frac{c}{3g}\right)n}\right)^{cn/2} = m^{\left(1-\frac{c}{6g}\right)cn^2}
\]
ways to color all the edges $\chi(j_t,i)$ and $\chi(k_t,i)$ with $1 \le t \le cn/2$ and $i\in I$.

So far we have colored a total of $cn^2$ edges of $K_{n,n,n}^*$. Each of the remaining $6n^2 - cn^2$ edges has at most $m$ possible colors to choose from. Thus,
\[
|\F_2(M, \mathcal U, \mathcal I)| \le m^{\left(1-\frac{c}{6g}\right)cn^2} \cdot m^{6n^2-cn^2} = m^{\left(6-\frac{c^2}{6g}\right)n^2}.
\]

But the number of choices of $(M,\mathcal U, \mathcal I)$ is $m^{o(n^2)}$, so for $n$ sufficiently large in terms of $g$,
\[
|\F_2 | \le m^{o(n^2)} \max_{(M,\mathcal U, \mathcal I)} |\F_2(M, \mathcal U, \mathcal I)| \le m^{\left(6-\frac{c^2}{7g}\right)n^2},
\]
as desired.
\end{proof}
We will soon construct an invariant $\iota:\F\rightarrow[0,1]$
with the property that for $m$ large enough,
\begin{align}
\mathbb{E}_{\F\backslash\F_{1}}[\iota(\chi)] & \ge \frac{3}{4} \label{eq:iota-F1} \\
\mathbb{E}_{\F\backslash(\F_{2}\cup\F_{3})}[\iota(\chi)] & \le \frac{1}{4}, \label{eq:iota-F23}
\end{align}
where $\mathbb{E}_{\mathcal{G}}[\cdot]$ is the averaging operator over a uniform random element of a family $\mathcal{G}$. To see why such an invariant is useful, consider its average value on the family $\F' = \F \setminus (\F_{1}\cup\F_{2}\cup\F_{3})$. If $\mathbb E_{\F'}[\iota(\chi)] \le \frac{1}{2}$, then (\ref{eq:iota-F1}) shows that $\F'$ contains at most half the complement of $\F_1$, so $|\F'| \le |\F_{2}\cup\F_{3}|$. Similarly, if $\mathbb E_{\F'}[\iota(\chi)] > \frac{1}{2}$, then (\ref{eq:iota-F23}) implies that $|\F'| \le |\F_1|$. In either case, at least half of $\F$ must lie in the union $\F_{1}\cup\F_{2}\cup\F_{3}$ of the low-entropy families. For the details of this argument, see the proof of Lemma~\ref{lem:main} on page~\pageref{proof:lem-main} below.

The next two lemmas give product formulas for the number of recolorings $\chi$ has on $E_1$ and $E_3$. The latter is easier to understand, so we begin with it.
\begin{lem}
\label{lem:F3}For any $\chi\in\F$, 
\[
|\chi^{*}(E_{3})|=\prod_{j\in J}\prod_{k\in K}|\chi^{*}(\{(j,k),(k,j)\})|.
\]
\end{lem}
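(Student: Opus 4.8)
The plan is to show that the recoloring choices on the edge pairs $\{(j,k),(k,j)\}$ for distinct $(j,k)\in J\times K$ are completely independent of one another, so that the number of valid recolorings of $\chi$ on all of $E_3$ factors as a product over these pairs. The key structural observation is that a recoloring $\chi'$ agreeing with $\chi$ off $E_3$ is triangle-free if and only if it creates no adjacent triangle; but since only the colors on $E_3=(J\times K)\cup(K\times J)$ are being changed, the only triples whose ``adjacent triangle'' status can change are those with one vertex in $I$, one in $J$, one in $K$ — and for such a triple $\{i,j,k\}$, the relevant three adjacency conditions involve $\chi'(i,j),\chi'(i,k)$ (which are fixed, equal to $\chi(i,j),\chi(i,k)$, both outside $E_3$), together with $\chi'(j,k),\chi'(k,j)$ on the one hand and $\chi'(j,i),\chi'(k,i)$ on the other — wait, I need to be careful: the adjacent-triangle condition for $\{i,j,k\}$ is $\chi'(i,j)\sim\chi'(i,k)$, $\chi'(j,i)\sim\chi'(j,k)$, $\chi'(k,i)\sim\chi'(k,j)$. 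Only $\chi'(j,k)$ and $\chi'(k,j)$ among these six lie in $E_3$.

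So here is the cleaner way to run the argument. First I would note that $\chi^*(E_3)$ is a subset of the full product space $\prod_{(j,k)}(\text{colorings of }\{(j,k),(k,j)\})$, and it suffices to prove that $\chi'\in\chi^*(E_3)$ if and only if for every $(j,k)\in J\times K$ the restriction of $\chi'$ to $\{(j,k),(k,j)\}$ lies in $\chi^*(\{(j,k),(k,j)\})$ (the set of recolorings of that single pair, keeping everything else equal to $\chi$). The forward direction is immediate. For the reverse, suppose $\chi'$ agrees with $\chi$ off $E_3$ and each pair-restriction is individually valid; I must show $\chi'$ has no adjacent triangle. Any adjacent triangle of $\chi'$ must be a triple $\{i,j,k\}$ with $i\in I$, $j\in J$, $k\in K$ (a triple with two vertices in the same part cannot be a triangle since $K_{n,n,n}^*$ has no edges inside a part, and a triple inside $I\cup J\cup K$ using only $I$- and $J$-vertices, etc., would need all colors unchanged, hence would already be an adjacent triangle of $\chi$, contradicting $\chi\in\F$). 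For such $\{i,j,k\}$, the adjacent-triangle condition of $\chi'$ depends only on $\chi'(i,j)=\chi(i,j)$, $\chi'(i,k)=\chi(i,k)$, $\chi'(j,i)=\chi(j,i)$, $\chi'(k,i)=\chi(k,i)$, and $\chi'(j,k),\chi'(k,j)$ — i.e. only on $\chi$ together with the restriction of $\chi'$ to the single pair $\{(j,k),(k,j)\}$. But the coloring $\chi''$ which equals $\chi$ everywhere except that it takes the values $\chi'(j,k),\chi'(k,j)$ on that pair is in $\chi^*(\{(j,k),(k,j)\})$ by hypothesis, so it is triangle-free; since $\chi''$ and $\chi'$ give the triple $\{i,j,k\}$ the identical six colors, $\{i,j,k\}$ is not an adjacent triangle of $\chi''$, hence not of $\chi'$ either. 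This holds for all $\{i,j,k\}$, so $\chi'\in\F$ and therefore $\chi'\in\chi^*(E_3)$.

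Having established this iff characterization, the product formula follows at once: the map $\chi'\mapsto(\chi'|_{\{(j,k),(k,j)\}})_{(j,k)\in J\times K}$ is a bijection from $\chi^*(E_3)$ onto $\prod_{j\in J}\prod_{k\in K}\chi^*(\{(j,k),(k,j)\})$ (surjectivity is the reverse direction just proved, injectivity is clear since these restrictions together with $\chi|_{E\setminus E_3}$ determine $\chi'$), and taking cardinalities gives $|\chi^*(E_3)|=\prod_{j\in J}\prod_{k\in K}|\chi^*(\{(j,k),(k,j)\})|$.

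I do not expect a serious obstacle here; the only point requiring care is the case analysis showing that the only triples whose adjacent-triangle status can differ between $\chi$ and $\chi'$ are the ``transversal'' triples $\{i,j,k\}$ with one vertex in each part, and that for those the status is a function of $\chi$ and one coordinate-pair of $\chi'$ alone. Once that localization is in hand, the bijection and the product formula are formal.
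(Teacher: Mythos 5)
Your proposal is correct and follows essentially the same approach as the paper: both rest on the observation that, once $\chi$ is fixed off $E_3$, whether a transversal triple $\{i,j,k\}$ is an adjacent triangle depends only on the values of $\chi'$ on the single pair $\{(j,k),(k,j)\}$, so the recoloring choices factor over pairs. The paper states this in two sentences and leaves the bijection implicit; you spell out the ``iff'' characterization and the bijection in full, which is a more careful rendering of the identical idea.
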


\begin{proof}
This is just the observation that if we only change the values of
$\chi$ on edges between $J$ and $K$, whether or not any particular triple $(i,j,k)$ forms an adjacent
triangle depends only on the choice of $\chi(j,k)$ and $\chi(k,j)$, and not on
any other choices. Thus, we can pick a valid recoloring of those two pairs independently for each $(j,k)$.
\end{proof}
To prove the same kind of result for $E_{1}$, we associate a family
of bipartite graphs $(G_{i}(\chi))_{i\in I}$ to a given $\chi\in\F$.
The bipartite graph $G_{i}(\chi)$ has parts $J$ and $K$,
where $j\sim k$ in $G_{i}(\chi)$ if and only if $\chi(j,i)\sim\chi(j,k)$ and
$\chi(k,i)\sim\chi(k,j)$. That is, $j\sim k$ in $G_{i}(\chi)$ if
$(i,j,k)$ is two-thirds of the way to making an adjacent triangle,
only missing the adjacency $\chi(i,j)\sim\chi(i,k)$ centered at $i$. Observe that $G_i(\chi)$ depends only on $\chi|_{E_2 \cup E_3}$.

We claim that to count the number of ways to recolor $\chi$ on $E_1$, it suffices to enumerate homomorphisms out of $G_i(\chi)$. Write $\hom(G,A)$ for the set of all graph homomorphisms from $G$ to $A$.

\begin{lem}
\label{lem:F1}For any $\chi\in\F$,
\begin{equation}
|\chi^{*}(E_{1})|=\prod_{i\in I}|\hom(G_{i}(\chi),\overline{A})|.\label{eq:E1-prod}
\end{equation}
\end{lem}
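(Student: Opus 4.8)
The plan is to decompose a recoloring of $\chi$ on $E_1$ into independent choices, one for each $i\in I$, and to identify the valid choices at $i$ with homomorphisms from $G_i(\chi)$ to $\overline A$. First I would observe that $E_1 = I\times(J\cup K)$ is a disjoint union over $i\in I$ of the ``stars'' $S_i = \{i\}\times(J\cup K)$, and that changing $\chi$ only on $E_1$ leaves $\chi|_{E_2\cup E_3}$ fixed, hence leaves each graph $G_i(\chi)$ unchanged (as noted, $G_i(\chi)$ depends only on $\chi|_{E_2\cup E_3}$). So a recoloring $\chi'\in\chi^*(E_1)$ is precisely a choice, for each $i\in I$, of new values $\chi'(i,v)\in V(A)$ for $v\in J\cup K$, subject only to the constraint that no triple $(i,j,k)\in I\times J\times K$ becomes an adjacent triangle. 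The crucial point is that whether $(i,j,k)$ is an adjacent triangle under $\chi'$ depends only on the three values $\chi'(i,j),\chi'(i,k)$ and on $\chi|_{E_2\cup E_3}$ through the third adjacency; in particular the constraint at index $i$ involves only the star $S_i$. Therefore the constraints decouple across $i\in I$, and $|\chi^*(E_1)| = \prod_{i\in I} c_i$, where $c_i$ is the number of valid assignments $\phi_i:(J\cup K)\to V(A)$ with $\phi_i(v)=\chi'(i,v)$.

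Next I would pin down $c_i$. Fix $i$. A triple $(i,j,k)$ is forbidden from being an adjacent triangle; by definition of adjacent triangle this means we must NOT have all three of $\chi'(i,j)\sim\chi'(i,k)$, $\chi(j,i)\sim\chi(j,k)$, $\chi(k,i)\sim\chi(k,j)$. The last two conditions are exactly the definition of $j\sim k$ in $G_i(\chi)$. Hence the constraint is: for every edge $jk$ of $G_i(\chi)$, we must have $\phi_i(j)\not\sim\phi_i(k)$ in $A$, i.e. $\phi_i(j)\sim\phi_i(k)$ or $\phi_i(j)=\phi_i(k)$ in $\overline A$ — wait, one must be careful here, since $\overline A$ as a simple graph has no loops, so ``$\not\sim$ in $A$'' means ``equal or adjacent in $\overline A$''. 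I would resolve this by checking the convention for $\hom(G,\overline A)$ used in the paper: a graph homomorphism $G\to H$ maps adjacent vertices to adjacent-or-equal vertices when $H$ is loopless, or the paper may intend $\overline A$ to carry a loop at every vertex; in either reading, $\phi_i$ is a valid assignment if and only if $\phi_i$ is a homomorphism $G_i(\chi)\to\overline A$. So $c_i = |\hom(G_i(\chi),\overline A)|$, and multiplying over $i\in I$ gives \eqref{eq:E1-prod}.

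The main obstacle I anticipate is precisely this loop/convention issue at the level of non-edges of $A$: the non-adjacency relation in a simple graph is reflexive, so ``$\phi_i(j)$ and $\phi_i(k)$ are not adjacent in $A$'' includes the case $\phi_i(j)=\phi_i(k)$, whereas an edge of $\overline A$ between distinct vertices does not include that case. Getting the statement exactly right requires either (a) the convention that homomorphisms may collapse edges (send an edge to a ``loop''), which is standard when one thinks of $\overline A$ as the complement including all loops, or (b) an explicit argument that the diagonal contributions match up. I would handle this by adopting convention (a) explicitly, remarking that $\hom(G,\overline A)$ counts all maps $V(G)\to V(\overline A)$ sending each edge of $G$ to a pair of equal or $\overline A$-adjacent vertices; with this reading the bijection between valid assignments $\phi_i$ and elements of $\hom(G_i(\chi),\overline A)$ is immediate, and the lemma follows. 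A secondary, purely bookkeeping point is to make sure the decoupling across $i$ is genuinely valid — i.e., that no triple constraint links two different indices in $I$ — which is clear since every edge of $\Gamma(\chi)$ meeting $I\times J\times K$ uses exactly one vertex of $I$, so the star $S_i$ carries all of $i$'s constraints and those of distinct $i$ are on disjoint edge sets.
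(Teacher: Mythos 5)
Your proof is correct and follows essentially the same route as the paper's: decompose $\chi^*(E_1)$ as a product over the stars $E_1(i)=\{i\}\times(J\cup K)$, then identify the valid recolorings at each $i$ with maps sending edges of $G_i(\chi)$ to non-edges of $A$. The loop-convention point you raise is genuine — the equality requires reading $\hom(G_i(\chi),\overline{A})$ as counting maps that send each edge to an equal-or-$\overline{A}$-adjacent pair — and the paper silently adopts this convention (harmlessly, since every later use of $|\hom(\cdot,\overline{A})|$ is an upper bound valid under either reading).
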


\begin{proof}
The left hand side counts the number of ways to simultaneously recolor all
the edges $(i,j)$ and $(i,k)$ pointed out of $I$, while keeping the rest of $\chi$ fixed. Let $E_1(i)= \{i \}\times\{J\cup K\}$ be the set of edges pointed out of $i$. Since the edges out of $i$ can be recolored independently
of the edges out of $i'$, for any $i\ne i'$, we have the product formula
\begin{equation}\label{eq:E1i-prod}
|\chi^*(E_1)|=\prod_{i\in I}|\chi^*(E_1(i))|.
\end{equation}

It remains to show $|\chi^*(E_1(i))| = |\hom(G_{i}(\chi),\overline{A})|$. Since $G_{i}(\chi)$ keeps track
of which triples already have two adjacent pairs, a recoloring $\chi'$ of $\chi$ on $E_1(i)$ will be triangle-free if and only if $\chi'(i,j)\not\sim\chi'(i,k)$ whenever $j\sim k$
in $G_{i}(\chi)$. It follows that the map $v\mapsto \chi'(i,v)$ defined from $J\cup K$ to $V(A)$ must send edges of $G_{i}(\chi)$
to non-edges of $A$. This is exactly the definition of a graph homomorphism
from $G_{i}(\chi)$ to $\overline{A}$, so we have exhibited an injection $\chi^*(E_1(i))\hookrightarrow \hom(G_{i}(\chi),\overline{A})$.

Conversely, given any $\phi \in \hom(G_i(\chi),\overline{A})$, recolor $\chi$ on $E_1(i)$ to give a coloring $\chi'$ with $\chi'(i,v)=\phi (v)$. This $\chi'$ will always be triangle-free, so we have found a bijection between $\hom(G_i(\chi),\overline{A})$ and $\chi^*(E_1(i))$. Together with (\ref{eq:E1i-prod}), this completes the proof of (\ref{eq:E1-prod}).
\end{proof}
We are now ready to define the invariant $\iota$ mentioned at the beginning of this section, which should ``separate'' $\F\backslash \F_1$ from $\F \backslash (\F_2 \cup \F_3)$ in the sense of inequalities (\ref{eq:iota-F1}) and (\ref{eq:iota-F23}). The value of $\iota$ is a
probability computed over a uniform random choice of $i\in I$:
\[
\iota(\chi)\coloneqq\Pr_{i\in I}\left[|\hom(G_{i}(\chi),\overline{A})|\ge m^{\left(2-\frac{c^2}{g}\right)n}\right].\]
Recall that $\F_1$ is the family of colorings $\chi$ for which $|\chi^*(E_1)|$ is small, so by the preceding lemma this function $\iota(\chi)$ must be large for elements of $\F \backslash \F_1$. The next lemma formalizes this intuition by proving inequality (\ref{eq:iota-F1}).
\begin{lem}
\label{lem:iota-F1}If $\chi\in\F\backslash\F_{1}$,
then
\[
\iota(\chi)>\frac{3}{4}.
\]
\end{lem}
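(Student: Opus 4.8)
The plan is to argue by contradiction: suppose $\iota(\chi) \le \frac 34$. We want to show this forces $|\chi^*(E_1)|$ to be small, contradicting $\chi \notin \F_1$. By Lemma~\ref{lem:F1} we have the product formula
\[
|\chi^*(E_1)| = \prod_{i\in I} |\hom(G_i(\chi),\overline A)|,
\]
so the only information we need is an a priori upper bound on each factor $|\hom(G_i(\chi),\overline A)|$. The factors split into two types: those $i$ for which $|\hom(G_i(\chi),\overline A)| \ge m^{(2-c^2/g)n}$ (the ``large'' factors, whose number is $\iota(\chi)\cdot n$ by definition), and those for which $|\hom(G_i(\chi),\overline A)| < m^{(2-c^2/g)n}$ (the ``small'' factors).

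For the small factors, we use the bound $m^{(2-c^2/g)n}$ directly. For the large factors, we need a crude but universal upper bound valid for \emph{every} bipartite graph on parts $J,K$: since $|J\cup K| = 2n$, any map $J\cup K \to V(A)$ has at most $m^{2n}$ possibilities, so trivially $|\hom(G_i(\chi),\overline A)| \le m^{2n}$. Thus if $\iota(\chi)\le \frac 34$, at most $\frac 34 n$ indices $i$ contribute a factor of at most $m^{2n}$ and the remaining at least $\frac 14 n$ indices contribute a factor less than $m^{(2-c^2/g)n}$, giving
\[
|\chi^*(E_1)| \le \left(m^{2n}\right)^{3n/4}\cdot \left(m^{(2-c^2/g)n}\right)^{n/4}
= m^{(2 - \frac{c^2}{4g})n^2}.
\]
Since $\frac{c^2}{4g} > \frac{c^2}{7g}$, this is strictly less than $m^{(2 - c^2/(7g))n^2}$ for $m$ large, placing $\chi$ in $\F_1$ — a contradiction. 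Hence $\iota(\chi) > \frac 34$.

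The only subtlety I expect is making sure the exponent arithmetic goes through cleanly, i.e.\ that the ``large'' fraction bounded by $\frac 34$ and the trivial factor $m^{2n}$ combine to beat $m^{(2-c^2/(7g))n^2}$ — but this is immediate from $3/4 \cdot 2 + 1/4 \cdot (2 - c^2/g) = 2 - c^2/(4g)$, and the slack between $1/4$ and $1/7$ absorbs any lower-order terms. No entropy inequality or structural fact about $A$ is needed here; the real work is deferred to the companion bound~\eqref{eq:iota-F23}, where the properties of $A$ (large odd girth, small bipartite independence number of $A$ and $A^2$) will be essential. So this lemma should be short, and the main obstacle is merely bookkeeping the constants $c = 10^{-4}$ consistently with the definitions of $\F_1$ and $\iota$.
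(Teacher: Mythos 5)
Your proposal is correct and is essentially identical to the paper's proof: both argue by contradiction, use the product formula from Lemma~\ref{lem:F1}, bound at most $\tfrac34 n$ factors trivially by $m^{2n}$ and at least $\tfrac14 n$ factors by $m^{(2-c^2/g)n}$, and conclude $|\chi^*(E_1)| \le m^{(2-c^2/(4g))n^2} < m^{(2-c^2/(7g))n^2}$, contradicting $\chi\notin\F_1$. No gaps.
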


\begin{proof}
If not, there are at least $\frac{1}{4}n$ values of $i\in I$
for which $|\hom(G_{i}(\chi),\overline{A})|<m^{\left(2-\frac{c^2}{g}\right)n}$. For the other $i$, there is the trivial bound $|\hom(G_{i}(\chi),\overline{A})|\le m^{2n}$. By (\ref{eq:E1-prod}), we get
\begin{align*}
|\chi^{*}(E_{1})| & = \prod_{i\in I}|\hom(G_{i}(\chi),\overline{A})|\\
 & < \left(m^{2n}\right)^{3n/4}\cdot\left(m^{\left(2-\frac{c^2}{g}\right)n}\right)^{n/4}\\
 & = m^{\left(2-\frac{c^2}{4g}\right)n^2} \\
 & < m^{\left(2-\frac{c^2}{7g}\right)n^2},
\end{align*}
contradicting the fact that $\chi\not\in\F_{1}$.
\end{proof}

It remains to prove (\ref{eq:iota-F23}), which states that $\iota$ is typically small on $\F_2\cup \F_3$.

\subsection{Random Recoloring\label{subsec:Random-Recoloring}}

We come to one of the key ideas in this proof, which is that if we
randomly sample $\chi\in\F\backslash(\F_{2}\cup\F_{3})$,
then $G_{i}(\chi)$ behaves like a random subgraph of a dense bipartite graph, and each edge appears with probability at least $m^{-2}$. Such a random graph has with high probability very
few homomorphisms to $\overline{A}$, so $\iota(\chi )$
is usually small for these $\chi$.

We next define another family of bipartite graphs $(G_{i}^{*}(\chi))_{i\in I}$ with parts $J$ and $K$.
We have $j\sim k$ in $G_i^* (\chi)$ whenever there exists $\chi'\in\chi^{*}(\{(j,k),(k,j)\})$
for which $j\sim k$ in $G_{i}(\chi')$. Since $\chi \in \chi^{*}(\{(j,k),(k,j)\})$ itself, $G_{i}(\chi)$
is a subgraph of $G_{i}^{*}(\chi)$. In fact, we can say something
much stronger.
\begin{lem}\label{lem:random-subgraph}
If $\chi\in\F$ and $\chi'$ is a uniform random element
of $\chi^{*}(E_{3})$, then $G_{i}(\chi')$ is a random subgraph of
$G_{i}^{*}(\chi)$ where each edge of $G_{i}^{*}(\chi)$ appears in
$G_{i}(\chi')$ independently with probability at least $m^{-2}$.
\end{lem}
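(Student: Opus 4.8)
The plan is to analyze how the recoloring $\chi'$ of $\chi$ on $E_3$ affects each bipartite graph $G_i(\chi')$ one pair $(j,k)\in J\times K$ at a time, and to show that the choice at each pair independently flips the edge $jk$ into $G_i(\chi')$ with probability at least $m^{-2}$, provided $jk$ is present in $G_i^*(\chi)$. The first step is to invoke Lemma~\ref{lem:F3}, which factors $\chi^*(E_3)$ as a product $\prod_{j\in J}\prod_{k\in K}\chi^*(\{(j,k),(k,j)\})$; this means a uniform random $\chi'\in\chi^*(E_3)$ is obtained by, independently for each $(j,k)$, choosing a uniform random valid recoloring of the two edges $(j,k)$ and $(k,j)$. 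Crucially, whether $j\sim k$ in $G_i(\chi')$ depends \emph{only} on $\chi'(j,i),\chi'(k,i)$ (which are in $E_2$ and hence unchanged, equal to $\chi(j,i),\chi(k,i)$) together with $\chi'(j,k),\chi'(k,j)$. So the events $\{j\sim k\text{ in }G_i(\chi')\}$ over distinct pairs $(j,k)$ are mutually independent, and we only need a lower bound on each individual probability.

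The second step is the per-pair estimate. Fix $(j,k)$ with $jk\in E(G_i^*(\chi))$. By definition of $G_i^*(\chi)$ there is at least one recoloring $\chi''\in\chi^*(\{(j,k),(k,j)\})$ with $\chi''(j,i)\sim\chi''(j,k)$ and $\chi''(k,i)\sim\chi''(k,j)$ in $A$ — i.e.\ a pair of colors $(\chi''(j,k),\chi''(k,j))\in V(A)^2$ that (a) lies in the neighborhood of $(\chi(j,i),\chi(k,i))$ in the tensor square $A^2$ (using that $\chi(j,i),\chi(k,i)$ are fixed), and (b) keeps $\chi$ triangle-free. Thus at least one good color-pair exists; the point is that the total number of \emph{valid} recolorings of the two edges is at most $m^2$ (trivially there are only $m^2$ color-pairs), so a uniform random valid recoloring hits a fixed good pair with probability at least $1/m^2$. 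One must check that any single good color-pair, substituted in, does indeed produce a triangle-free coloring — but this is immediate because triangle-freeness on recoloring only the pair $(j,k),(k,j)$ is governed solely by the triples through $\{j,k\}$, and the witness $\chi''$ already certifies it; so a fixed good pair lies in $\chi^*(\{(j,k),(k,j)\})$, and the probability of selecting it is exactly $1/|\chi^*(\{(j,k),(k,j)\})| \ge 1/m^2$.

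The third step assembles these: for each edge $jk$ of $G_i^*(\chi)$, independently over $(j,k)$, the edge appears in $G_i(\chi')$ with probability at least $m^{-2}$; and no edge outside $G_i^*(\chi)$ can ever appear (that is exactly the definition of $G_i^*(\chi)$). Hence $G_i(\chi')$ is stochastically a random subgraph of $G_i^*(\chi)$ in which edges are retained independently with probability at least $m^{-2}$, which is the claim. The main subtlety — really the only place care is needed — is the bookkeeping in the per-pair step: one must be careful that the notion of ``valid recoloring'' in $\chi^*(\{(j,k),(k,j)\})$ and the notion of ``$j\sim k$ in $G_i(\chi')$'' both factor cleanly through the single color-pair $(\chi'(j,k),\chi'(k,j))$ (with everything else held at its $\chi$-value), so that independence across pairs and the $1/m^2$ lower bound are simultaneously valid. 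Once that factorization is articulated, the rest is routine. Note the probability bound need not be exactly $m^{-2}$: it can be larger, but $m^{-2}$ suffices and is all the downstream argument requires.
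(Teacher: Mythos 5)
Your proof is correct and follows essentially the same route as the paper's: use the product structure of $\chi^*(E_3)$ established in (the proof of) Lemma~\ref{lem:F3} to get independence across pairs $(j,k)\in J\times K$, note that the event $j\sim k$ in $G_i(\chi')$ is determined by the fixed $E_2$-values together with $(\chi'(j,k),\chi'(k,j))$, and then use that the set $\chi^*(\{(j,k),(k,j)\})$ has size at most $m^2$ and (by definition of $G_i^*(\chi)$) contains at least one choice activating the edge. The extra checks you make --- that the good color-pair is itself a valid recoloring, and that no edge outside $G_i^*(\chi)$ can appear --- are the same points the paper leaves more implicit; nothing is missing.
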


\begin{proof}
Every edge of $G_{i}(\chi')$ is an edge of $G_{i}^{*}(\chi)$
by the definition of $G_{i}^{*}$. On the other hand, a given edge $\{j, k\}$
appears in $G_{i}^*(\chi)$ if and only if there exists an element $\chi'\in\chi^{*}(\{(j,k),(k,j)\})$
for which $\chi'(j,i)\sim\chi'(j,k)$ and $\chi'(k,i)\sim\chi'(k,j)$.
Thus the appearance of $j\sim k$ in $G_{i}(\chi')$ depends only
on $\chi'(j,k)$ and $\chi'(k,j)$, whose values are independent
from those random choices associated to all the other ordered pairs in $E_3$.

The total number of choices for $(\chi'(j,k),\chi'(k,j))$ is at most $m^2$, and the choice is made uniformly among all possible ones. Since at least one choice makes $j\sim k$ in $G_i(\chi')$, the probability that this happens is at least $m^{-2}$.
\end{proof}

We next show that $G_{i}^{*}(\chi)$ is usually dense when $\chi\not \in \F_{2}\cup\F_{3}$.

\begin{lem}
\label{lem:Gi-dense}If $\chi\in\F\backslash(\F_{2}\cup\F_{3})$
and $n$ is sufficiently large, then for at least $(1-3c)n^{3}$
triples $(i,j,k)\in I\times J\times K$, we have $j\sim k$ in $G_{i}^{*}(\chi)$.
\end{lem}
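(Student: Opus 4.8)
The plan is to bound, for a fixed $\chi\in\F\setminus(\F_2\cup\F_3)$, the number of triples $(i,j,k)\in I\times J\times K$ for which $j\not\sim k$ in $G_i^*(\chi)$, and to show this number is at most $3cn^3$. First I would unpack the definition: $j\not\sim k$ in $G_i^*(\chi)$ means that for \emph{every} recoloring $\chi'\in\chi^*(\{(j,k),(k,j)\})$, at least one of the two adjacencies $\chi'(j,i)\sim\chi'(j,k)$, $\chi'(k,i)\sim\chi'(k,j)$ fails. Equivalently, letting $U=\overline N(\chi(j,i))$ and $W=\overline N(\chi(k,i))$ be the non-neighborhoods in $A$ of the two relevant colors, the failure says that the pair $(\chi(j,k),\chi(k,j))$ cannot be changed to any pair in $\big(V(A)\setminus U\big)\times\big(V(A)\setminus W\big)$ while staying triangle-free. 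But by Lemma~\ref{lem:F3}, recolorings on $\{(j,k),(k,j)\}$ are unconstrained by the rest of $\chi$ except through the triples using that pair, so the only obstruction is from other vertices $i'\in I$: a recoloring $(\chi(j,k),\chi(k,j))\to(x,y)$ is valid iff for no $i'$ do we have simultaneously $x\in V(A)\setminus\overline N(\chi(j,i'))$ and $y\in V(A)\setminus\overline N(\chi(k,i'))$. This is where I would use the tensor-square graph $A^2$: thinking of the pair $(x,y)\in V(A)^2$ as a vertex of $A^2$, the condition ``$(x,y)$ is a valid recoloring'' is exactly ``$(x,y)$ is a non-neighbor in $A^2$ of $(\chi(j,i'),\chi(k,i'))$ for every $i'$,'' i.e. $(x,y)\in\overline N\big(\{(\chi(j,i'),\chi(k,i')):i'\in I\}\big)$.

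With this reformulation, fix $(j,k)$ and let $P_{j,k}=\{(\chi(j,i'),\chi(k,i')):i'\in I\}\subseteq V(A^2)$. Since $\chi$ is triangle-free the current pair $(\chi(j,k),\chi(k,j))$ is itself a valid recoloring, so $\overline N(P_{j,k})\neq\emptyset$; in particular the projections of $P_{j,k}$ cannot cover $V(A)$ in both coordinates, but more to the point I would now split into two cases according to the size of $P_{j,k}$. \emph{Case 1: $|P_{j,k}|\ge m^{2-\frac{1}{3g}}$.} Then property (3) of $A$ (from Section~\ref{subsec:A-construction}) gives $|\overline N(P_{j,k})|<m^{2-\frac{1}{3g}}$; since $\chi\notin\F_2$, the set $U_{j,k}\coloneqq\overline N(P_{j,k})$ witnesses membership in $\F_2$ unless the relevant inequality fails, so there are fewer than $cn$ values of $i$ with $(\chi(j,i),\chi(k,i))\in\overline N(P_{j,k})$. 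The point is that $j\not\sim k$ in $G_i^*(\chi)$ forces $(\chi(j,i),\chi(k,i))\in\overline N(P_{j,k})$: indeed if $j\not\sim k$ in $G_i^*$, then in particular taking $\chi'=\chi$ we see $(\chi(j,i),\chi(k,i))$ is a non-neighbor in $A^2$ of every element of $P_{j,k}$. Hence at most $cn$ values of $i$ are bad for this $(j,k)$. \emph{Case 2: $|P_{j,k}|<m^{2-\frac{1}{3g}}$.} Here I would argue differently — the set $P_{j,k}$ is small, so I need a separate bound on the number of bad $i$; but in fact if $j\not\sim k$ in $G_i^*(\chi)$ then $(\chi(j,i),\chi(k,i))\in P_{j,k}$ is false only if... — I would instead observe that $j\not\sim k$ in $G_i^*(\chi)$ implies the edge $\{(\chi(j,i)),(\chi(k,i))\}$-related pattern still lands in a small set, and directly: at most $|P_{j,k}|\le m^{2-\frac{1}{3g}}\le cn^2/\text{(something)}$... the cleaner route is to note $G_i^*(\chi)$ contains $G_i(\chi)$ and bound non-edges of $G_i^*$ using that $\chi\notin\F_3$, which forces $|\chi^*(E_3)|$ large, forcing (via Lemma~\ref{lem:F3}) many pairs $(j,k)$ to have many recoloring options, hence many to satisfy $j\sim k$ in $G_i^*$. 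Summing the $\le cn$ bad $i$ per pair from Case 1 over all $n^2$ pairs $(j,k)$, plus $\le 2cn^3$ from the pairs handled by the $\F_3$ argument, gives at most $3cn^3$ bad triples, i.e. at least $(1-3c)n^3$ good ones.

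The main obstacle I anticipate is making the two cases interact cleanly and getting the constant exactly $3c$: the $\F_2$-condition is phrased with a fixed threshold $cn^2$ on the number of pairs and $cn$ on the number of $i$'s, while the $\F_3$-condition is phrased in terms of the total recoloring count $m^{(2-\frac{c^2}{7g})n^2}$, and translating the latter into ``for most $i$, $G_i^*(\chi)$ has few non-edges'' requires a careful averaging using the product formula in Lemma~\ref{lem:F3}, together with property (2) of $A$ to convert ``few recoloring options for the pair $(j,k)$'' into ``the forbidden sets $\overline N(\chi(j,i))$ for varying $i$ must jointly be large,'' which then feeds back into the $\F_2$-type count. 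Keeping track of which pairs $(j,k)$ are charged to the $\F_2$ bound versus the $\F_3$ bound, and ensuring no triple is double-counted or missed, is the delicate bookkeeping step; I expect the numerology to work out precisely because $c=10^{-4}$ is chosen small enough that $3c$ is a comfortable loss.
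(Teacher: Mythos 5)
Your high-level plan is the right one, and the key conceptual move — passing to the tensor square $A^2$ and using Property (3) of $A$ to set up a dichotomy, then charging the two branches to the $\F_3$- and $\F_2$-conditions respectively — is exactly what the paper does. But the central membership claim in your Case 1 is incorrect, and it is precisely the step where the two non-neighborhood operations must be distinguished.

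You define $P_{j,k}=\{(\chi(j,i'),\chi(k,i')):i'\in I\}$ and correctly observe (modulo the missing restriction to those $i'$ with $\chi(i',j)\sim\chi(i',k)$, which is needed for the identification to be exact) that the set of valid recolorings of $\chi$ on $\{(j,k),(k,j)\}$, viewed as pairs in $V(A)^2$, is $\overline N(P_{j,k})$. But then you assert that $j\not\sim k$ in $G_i^*(\chi)$ forces $(\chi(j,i),\chi(k,i))\in\overline N(P_{j,k})$, i.e.\ that the pair associated to a bad index $i$ is itself a valid recoloring. That is not what non-adjacency in $G_i^*$ says. Unwinding the definition: $j\not\sim k$ in $G_i^*(\chi)$ means that for \emph{every} valid recoloring pair $(x,y)\in\overline N(P_{j,k})$, we have $(\chi(j,i),\chi(k,i))\not\sim(x,y)$ in $A^2$. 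Hence the correct conclusion is $(\chi(j,i),\chi(k,i))\in\overline N\bigl(\overline N(P_{j,k})\bigr)$, one more non-neighborhood than you took. Your justification, ``taking $\chi'=\chi$,'' only yields non-adjacency of $(\chi(j,i),\chi(k,i))$ to the single pair $(\chi(j,k),\chi(k,j))$, which is one element of $\overline N(P_{j,k})$ — it says nothing about $P_{j,k}$ itself, and does not yield the displayed claim.

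This error is not cosmetic: it breaks Case 1. In Case 1 you assume $|P_{j,k}|\ge m^{2-\frac{1}{3g}}$, so by Property (3) the set $\overline N(P_{j,k})$ is small. But the set that actually contains the bad pairs, namely $\overline N(\overline N(P_{j,k}))$, contains $P_{j,k}$ and is therefore \emph{large} in this case — Property (3) gives you no control on it, and you cannot feed $\overline N(P_{j,k})$ into the $\F_2$-condition as the witness set $U_{j,k}$. The paper's trichotomy is organized to avoid exactly this trap: it sets $U_{j,k}=\overline N(\{(\chi'(j,k),\chi'(k,j)):\chi'\in\chi^*(\{(j,k),(k,j)\})\})$, so that $i\in I_{j,k}$ is \emph{equivalent} to $(\chi(j,i),\chi(k,i))\in U_{j,k}$, and then applies Property (3) to $\chi^*(\{(j,k),(k,j)\})$: either $|\chi^*(\{(j,k),(k,j)\})|<m^{2-\frac{1}{3g}}$ (Type 1, at most $cn^2$ pairs by the $\F_3$-condition via Lemma~\ref{lem:F3}), or $|U_{j,k}|<m^{2-\frac{1}{3g}}$, in which case one splits further on whether $|I_{j,k}|\ge cn$ (Type 2, at most $cn^2$ pairs by the $\F_2$-condition) or $|I_{j,k}|<cn$ (Type 3, at most $cn$ indices per pair). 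Each type contributes at most $cn^3$ triples. Note that in your decomposition, ``$|P_{j,k}|$ large'' corresponds to the paper's Type 1 branch and should be charged to $\F_3$, not $\F_2$ — you have the two branches swapped. Finally, your Case 2 is, by your own admission, left unfinished; the double-counting and the role of Lemma~\ref{lem:nonneighborhood} (Property (2)) that you gesture at are not actually used in the paper's argument, which handles the residual case simply via the trivial $|I_{j,k}|<cn$ bound.
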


\begin{proof}
For each $(j,k)\in J\times K$, define $U_{j,k}$ to be the set of all $(u_{1},u_{2})\in V(A^{2})$
non-adjacent in $A^2$ to all pairs of the form $(\chi'(j,k), \chi'(k,j))$ for $\chi' \in \chi^{*}(\{(j,k),(k,j)\})$. By Property 3 of $A$ defined at the end of Section~\ref{subsec:A-construction}, either $|U_{j,k}| < m^{2-\frac{1}{3g}}$ or $|\chi^{*}(\{(j,k),(k,j)\})| < m^{2-\frac{1}{3g}}$.

Also, let $I_{j,k}$ be the set of $i\in I$ for which $j\not \sim k$ in $G_i^*(\chi)$. By the definition of $G_i^*(\chi)$, $i\in I_{j,k}$ is equivalent to $(\chi(j,i),\chi(k,i))\in U_{j,k}$.

We bound the number of triples $(i,j,k)$ for which $j\not\sim k$ in $G_i^*(\chi)$ by breaking them up into three types. Type 1 triples are those for which $|\chi^{*}(\{(j,k),(k,j)\})| < m^{2-\frac{1}{3g}}$. Type 2 triples are those for which $|U_{j,k}| < m^{2-\frac{1}{3g}}$ and $|I_{j,k}|\ge cn$. Type 3 triples are the remaining ones, which satisfy $|I_{j,k}|< cn$.

Using Lemma~\ref{lem:F3} and the fact that $\chi\not\in\F_{3}$,
we get
\[
\prod_{j\in J}\prod_{k\in K}|\chi^{*}(\{(j,k),(k,j)\})| = |\chi^*(E_3)| > m^{(2-\frac{c}{7g})n^2} > m^{(2-\frac{c}{3g})n^2}.
\]
It follows that there are less than $cn^{2}$ pairs $(j,k)\in J\times K$ for which $|\chi^{*}(\{(j,k),(k,j)\})|<m^{2-\frac{1}{3g}}$, as otherwise the above product would be less than $\left(m^{2-\frac{1}{3g}}\right)^{cn^2}\left(m^2\right)^{n^2-cn^2}=m^{(2-\frac{c}{3g})n^2}$, a contradiction. Thus there are at most $cn^3$ Type 1 triples.

As for Type 2 triples, note that if there are at least $cn^2$ pairs $(j,k)$ for which $|U_{j,k}| < m^{2-\frac{1}{3g}}$ and $|I_{j,k}|\ge cn$, then this contradicts the assumption that $\chi \not \in \F_2(c)$. Thus there are fewer than $cn^2$ choices of $j$ and $k$ for Type $2$ triples, for a total of at most $cn^3$.

Finally, Type 3 triples satisfy $|I_{j,k}|< cn$, so there are at most $cn$ choices of $i$ for each pair $(j,k)$. The number of Type 3 triples is also at most $cn^3$.

In total, there are at most $3cn^3$ triples $(i,j,k)$ for which $j\not\sim k$ in $G_i^*(i)$.
\end{proof}

Define a bipartite graph $G=(U,V,E)$ to be $m$-good if
every vertex of $U$ has degree $m$ and for every $V'\subseteq V$ of
size $|V'|\ge (1-c)|V|$, at least $\frac{1}{4}|E|$ edges
are incident to $V'$. The next lemma is the only place in the proof where we need to pick $c$ fairly small; everywhere else $c=\frac{1}{36}$ would suffice.

When $\chi'$ is a uniform random element of $\chi^*(E_3)$ and $i$ is a uniform random element of $I$, by the previous two lemmas we know that $G_i(\chi')$ is distributed like a random subgraph of the dense bipartite graph $G_i^*(\chi)$. Using this, we show that with high probability $\chi'$ has few homomorphisms to $\overline{A}$, so that $\iota(\chi')$ is usually small. 

\begin{lem}
\label{lem:m-good-sub}Suppose $\chi\in\F$, $G_{i}^{*}(\chi)$
has at least $n^{2}/3$ edges, and $\chi'$ is a uniform random element
of $\chi^{*}(E_{3})$. Then, with high probability $G_{i}(\chi')$ contains an $m$-good
subgraph whose vertex set contains at least $n/6$ vertices of $J$ and all the
vertices of $K$.
\end{lem}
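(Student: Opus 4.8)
The plan is to extract an $m$-good subgraph from $G_i(\chi')$ by a two-step argument: first pass to a large induced subgraph in which every remaining vertex of $J$ still has reasonably large degree in $G_i^*(\chi)$, then use the random subgraph structure from Lemma~\ref{lem:random-subgraph} to show that most of these high-degree vertices retain degree at least $m$ in $G_i(\chi')$, and finally thin out each such vertex's neighborhood to exactly $m$. The role of the hypothesis that $G_i^*(\chi)$ has at least $n^2/3$ edges is to guarantee, by a standard averaging/deletion argument, that we can find a subset $J'\subseteq J$ with $|J'|\ge n/6$ such that every $j\in J'$ has degree $\Omega(n)$ in $G_i^*(\chi)$ when restricted to $K$; one deletes vertices of $J$ of degree below some threshold $\tau n$ (with $\tau$ a small absolute constant), and since at most $\tau n^2$ edges are lost this way while at least $n^2/3$ were present, a constant fraction of $J$ survives — in particular at least $n/6$ of them if $\tau$ is chosen appropriately. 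All of $K$ is kept.

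Next I would invoke Lemma~\ref{lem:random-subgraph}: conditioned on $\chi$, each edge $\{j,k\}$ of $G_i^*(\chi)$ appears independently in $G_i(\chi')$ with probability at least $m^{-2}$. For a fixed $j\in J'$ with $d(j)\ge \tau n$ edges to $K$ in $G_i^*(\chi)$, the expected degree of $j$ in $G_i(\chi')$ is at least $\tau n/m^2 = \tau n^{2/3}$, which is $\gg m = n^{1/6}$ since $n$ is large. By a Chernoff bound, the probability that $j$ has degree less than $m$ in $G_i(\chi')$ is exponentially small in $n^{2/3}$, hence $o(1/n)$. A union bound over the at most $n$ vertices of $J'$ shows that with high probability every $j\in J'$ has $G_i(\chi')$-degree at least $m$. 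On this event, for each $j\in J'$ choose an arbitrary subset of exactly $m$ of its $G_i(\chi')$-neighbors in $K$; the resulting bipartite graph $H$ on vertex set $J'\cup K$ is $m$-regular on the $J'$-side.

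It remains to check the second defining property of $m$-goodness: for every $K'\subseteq K$ with $|K'|\ge(1-c)|K|$, at least $\frac14|E(H)|$ edges of $H$ meet $K'$. Here I would want to set up the deletion step more carefully so that this is automatic — for instance, by additionally arguing that in $G_i^*(\chi)$ (and hence with high probability in $G_i(\chi')$, after another Chernoff+union-bound step) no small set of vertices in $K$ is incident to too large a fraction of the edges. Concretely, if each $j\in J'$ has degree exactly $m$ in $H$ and $|E(H)| = m|J'|$, then removing $c|K| = cn$ vertices from $K$ destroys at most $cn$ times the maximum $K$-degree in $H$; so one needs the maximum degree of a vertex of $K$ in $H$ to be at most, say, $\frac{m|J'|}{4cn}$, which holds if the $J'$-neighborhoods are spread out. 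This can be arranged either by choosing the size-$m$ neighborhoods of the $j\in J'$ greedily to keep $K$-degrees balanced, or — cleaner — by first deleting from $K$ any vertex of excessively high degree in $G_i^*(\chi)$ before running the random-subgraph argument, noting that only few such vertices exist since $G_i^*(\chi)\subseteq J\times K$ has at most $n^2$ edges. The main obstacle is precisely this $m$-goodness condition: the random-subgraph/Chernoff machinery straightforwardly produces an $m$-regular $J'$-side, but ensuring that no $cn$-subset of $K$ carries $\frac34$ of the edges requires a degree-regularization on the $K$-side that must be threaded through both the deletion step and the concentration step without damaging the bound $|J'|\ge n/6$. Once the setup is right, this is exactly where the smallness of $c$ (as flagged in the paragraph preceding the lemma) gets used, since $c$ must beat the constant $\frac14$ with room to spare.
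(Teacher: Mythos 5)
Your first half matches the paper: average to find $J'\subseteq J$ with $|J'|\ge n/6$ and $\deg_{G_i^*(\chi)}(j)\ge n/6$ for all $j\in J'$, use Lemma~\ref{lem:random-subgraph} plus Chernoff to get every $j\in J'$ degree at least $m$ in $G_i(\chi')$, then keep exactly $m$ edges per $j\in J'$. The genuine gap is in the second condition of $m$-goodness, and you correctly identify it as the main obstacle --- but neither of your proposed repairs closes it. Choosing the $m$-subsets \emph{arbitrarily} fails outright (all $mn/6$ edges could land on $o(cn)$ vertices of $K$, so some $K'$ of size $(1-c)n$ sees no edges); the greedy balancing is left unanalyzed; and deleting high-degree vertices of $K$ from $G_i^*(\chi)$ does not address the problem, since the concentration can be created entirely at the selection step, and moreover the lemma requires the $m$-good subgraph to contain \emph{all} of $K$, so you cannot discard $K$-vertices.

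The missing idea is to make the thinning step itself random: for each $j\in J'$, select a \emph{uniformly random} $m$-subset of its $G_i(\chi')$-neighborhood (equivalently, of its $G_i^*(\chi)$-neighborhood, after conditioning --- this is why the paper first passes to the subgraph $L^*$ where each edge appears with probability exactly $m^{-2}$, so that the retained neighborhood is exchangeable). Then for any fixed $K'$ with $|K'|\ge(1-c)n$, since each $j\in J'$ has at least $n/6$ neighbors in $G_i^*(\chi)$ and $|K\setminus K'|\le cn\ll n/12$, at least half of each neighborhood lies in $K'$, so the number of selected edges meeting $K'$ is a sum of $n/6$ independent $[0,m]$-valued variables with mean at least $mn/12$. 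Chernoff gives failure probability at most $e^{-n/192}$, which must then beat a union bound over all $\binom{n}{(1-c)n}=\binom{n}{cn}$ choices of $K'$ --- this, rather than "beating the constant $\tfrac14$," is where $c=10^{-4}$ is needed: one needs $\binom{n}{cn}\le 1.002^n < e^{n/192}$. Without exponential-in-$n$ concentration per fixed $K'$ and this entropy comparison, the "for every $K'$" quantifier cannot be discharged.
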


\begin{proof}
By Lemma~\ref{lem:random-subgraph}, $G_i(\chi')$ is a random subgraph of $G_i^*(\chi)$ where edges appear independently, and the probability any given edge appears is at least $m^{-2}$. Define $L^*$ to be the random subgraph of $G_i^*(\chi)$ where each edge appears with probability exactly $m^{-2}$. Since every subgraph is at least as likely to appear in $G_i(\chi')$ as in $L^*$, it suffices to show that with high probability $L^*$ contains an $m$-good subgraph with the given properties. 

Since $G_{i}^{*}(\chi)$ has at least $n^{2}/3$ edges, there is a subset $J' \subset J$ of $n/6$ vertices of degree at least $n/6$. For $n$ sufficiently
large, since $m=n^{1/6}$ and each edge of $G_{i}^{*}(\chi)$ appears
in $L^*$ with probability $m^{-2}=n^{-1/3}$, by the union bound, with high probability every vertex of $J'$ has degree at least $m$ in $L^*$. We may condition on this event occurring.

Let $L$ be the graph obtained from $L^*[J'\cup K]$ by independently
and uniformly selecting exactly $m$ edges incident to each $j\in J'$. By the definition of $L^*$, the neighborhood of any given $j\in J'$ in $L^*$ is a uniform random subset of size at least $m$ of the neighborhood of $j$ in $G_i^*(\chi)$. Thus, the neighborhood of any given $j\in J'$ in the graph $L$ is exactly a uniform random $m$-subset of its neighborhood in $G_i^*(\chi)$. We claim that $L$ is $m$-good with high probability.

For each $K'\subseteq K$ of order at least $\frac{11}{12}n$, write
$e_L(J',K')$ for the number of edges in $L$ incident to $K'$. We
bound the probability that $e_L(J',K')\le\frac{1}{4}|E(L)|=mn/24$
edges. First note that since each vertex of $J'$ has degree at least $n/6$ in $G_i^*(\chi)$, at least half of its neighbors are in $K'$. We claim that the number of edges $K'$ receives in $G$ will be tightly concentrated about its mean, which is
at least $mn/12$. In fact, if $X$ is the number of edges incident
to $K'$, then 
\[
X= \sum_{j\in J'} X_j,
\]
where $X_{j}$ is the random variable counting the
edges of $L$ between $j$ and $K'$. Each $X_{j}$ takes values
in $[0, m]$, and since at least half of the neighbors of $j$ are in $K'$, $\mathbb{E}[X_j] \ge m/2$. Thus, $\mathbb{E}[X] \ge (m/2)(n/6)=mn/12$. We wish to show that $X$ is tightly concentrated about this mean.

One standard form of the Chernoff bound (see Lemma A.1.16 of Alon and Spencer~\cite{AlSp}) states if $Y$ is the sum of $t$ mutually independent random variables $Y_1,\ldots, Y_t$ satisfying $\mathbb E[Y_j]=0$ and $|Y_j|\le 1$, then
\[
\Pr[Y < -a] < e^{-a^2/2t}
\]
for all $a\ge 0$.

Taking $t = n/6$, $a=n/24$, and $Y_j = \frac1m (X_j - \mathbb E[X_j])$, we find
\[
\Pr[e_L(J',K') < mn/24] \le \Pr[Y < -a] < e^{-a^2/2t} = e^{-n/192} \le 1.005^{-n}.
\]

On the other hand, it is easy to check that for $c=10^{-4}$, the number of subsets $K'\subseteq K$ of order
$(1-c)n$ is at most
\[
\binom{n}{(1-c)n}\le1.002^{n}.
\]
By the union bound over all $K'$ of this size, we see that with high probability, $e_L(J',K')\ge mn/24$
for every such $K'$, proving that $L$ is $m$-good.
\end{proof}
Finally, we need to prove that $m$-good graphs have few homomorphisms
to $\overline{A}$. To this end, we will use the following corollary of an entropy lemma of Shearer~\cite{ChFrGrSh}
(see also Corollary 15.7.5 of Alon and Spencer~\cite{AlSp}).
\begin{lem}
\label{lem:Shearer}Let $S_1, \ldots, S_n$ be finite sets, and let $\F$ be a family of $n$-tuples in $S_{1}\times S_{2}\times\cdots\times S_{n}.$
Let $\mathcal{C}=\{C_{1},\ldots,C_{r}\}$ be a collection of subsets
of $\{1,\ldots,n\}$ and suppose that each $1\le i\le n$ belongs
to at least $k$ members of $\mathcal{C}$. For each $1\le j\le r$
let $\F_{j}$ be the set of all projections of $\F$
onto the coordinates in $C_{j}$. Then,
\[
|\F|^{k}\le\prod_{j=1}^{r}|\F_{j}|.
\]
\end{lem}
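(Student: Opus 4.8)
The plan is to deduce this counting inequality from the entropy form of Shearer's lemma by the usual ``uniform distribution'' trick. First I would let $X = (X_1, \dots, X_n)$ be a random variable distributed uniformly on $\F$, so that its Shannon entropy (to any fixed base) satisfies $H(X) = \log |\F|$. For each $j$, write $X_{C_j}$ for the restriction of $X$ to the coordinates indexed by $C_j$; then $X_{C_j}$ is supported on $\F_j$, and since the entropy of a random variable never exceeds the logarithm of the size of its support, $H(X_{C_j}) \le \log |\F_j|$. Consequently the desired bound $|\F|^k \le \prod_{j=1}^r |\F_j|$ will follow once we establish the entropy inequality $k\,H(X) \le \sum_{j=1}^r H(X_{C_j})$, so the rest of the work is to prove this.

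To prove the entropy inequality I would expand each $H(X_{C_j})$ using the chain rule and the fact that conditioning does not increase entropy. Fix $j$ and write $C_j = \{i_1 < i_2 < \cdots < i_t\}$. The chain rule gives $H(X_{C_j}) = \sum_{\ell=1}^{t} H(X_{i_\ell} \mid X_{i_1}, \dots, X_{i_{\ell-1}})$, and since $\{i_1,\dots,i_{\ell-1}\}\subseteq\{1,\dots,i_\ell-1\}$ we may enlarge the conditioning to obtain
\[
H(X_{C_j}) \ \ge\ \sum_{\ell=1}^{t} H\!\left(X_{i_\ell} \mid X_1, \dots, X_{i_\ell - 1}\right)\ =\ \sum_{i \in C_j} H\!\left(X_i \mid X_1, \dots, X_{i-1}\right).
\]
Summing over all $j$ and interchanging the order of summation, the quantity $H(X_i \mid X_1,\dots,X_{i-1})$ appears with coefficient $|\{j : i \in C_j\}| \ge k$; since all these conditional entropies are nonnegative, this yields
\[
\sum_{j=1}^r H(X_{C_j}) \ \ge\ k \sum_{i=1}^n H\!\left(X_i \mid X_1, \dots, X_{i-1}\right)\ =\ k\,H(X),
\]
the last equality again being the chain rule for $X = (X_1,\dots,X_n)$. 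Combining with the support bounds $H(X_{C_j}) \le \log|\F_j|$ and $H(X)=\log|\F|$ gives $k\log|\F| \le \sum_j \log|\F_j|$, i.e. $|\F|^k \le \prod_{j=1}^r |\F_j|$.

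I do not expect a genuine obstacle here: the argument is entirely the chain rule, the bound $H \le \log(\text{support size})$, and the monotonicity $H(Y \mid Z, W) \le H(Y \mid Z)$. The one point to get right is the \emph{direction} of that monotonicity step — one must enlarge, not shrink, the set of conditioned variables — since this is exactly what makes the inequality point the correct way after summing over $j$. As an alternative to writing out the entropy argument, one could simply invoke the entropy lemma of Shearer from~\cite{ChFrGrSh} (or Corollary 15.7.5 of~\cite{AlSp}) and carry out only the translation between entropies and cardinalities described in the first paragraph.
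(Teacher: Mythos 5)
Your proof is correct and is the standard entropy proof of Shearer's lemma: uniform distribution on $\F$, chain rule, monotonicity of conditional entropy under enlarged conditioning, sum over $j$ and interchange, then convert back to cardinalities. All the steps check out, and in particular you apply the monotonicity $H(Y \mid Z, W) \le H(Y \mid Z)$ in the correct direction and invoke nonnegativity of conditional entropy to pass from the coefficient $|\{j : i \in C_j\}| \ge k$ to the factor $k$. The paper itself does not supply a proof of this lemma; it simply cites it as a corollary of the entropy lemma of Shearer from Chung, Frankl, Graham, and Shearer (and points to Corollary 15.7.5 of Alon and Spencer), which is exactly the shortcut you mention at the end of your write-up. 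So your self-contained argument is a fully correct expansion of the citation the paper relies on, not a different route.
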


We are ready to show the last step.
\begin{lem}
\label{lem:good-low-hom}If $m$ is sufficiently large in terms of $g$ and $L=(U,V,E)$ is an $m$-good graph with
$|U|= n/6$ and $|V|=n$, then
\[
|\hom(L,\overline{A})|<m^{\left(\frac{7}{6}-\frac{c}{4g}\right)n}.
\]
\end{lem}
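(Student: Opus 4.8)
The plan is to bound $|\hom(L,\overline{A})|$ by applying Shearer's inequality (Lemma~\ref{lem:Shearer}) to the family $\F = \hom(L,\overline{A}) \subseteq V(A)^{U \cup V}$, using as the covering collection $\mathcal{C}$ the family of ``stars'' $C_u = \{u\} \cup N_L(u)$ for $u \in U$. Since $L$ is $m$-good, every vertex of $U$ has degree exactly $m$, so each $u\in U$ lies in exactly one set $C_u$, and each $v\in V$ lies in $\deg_L(v)$ of the sets $C_u$. The issue is that low-degree (or isolated) vertices of $V$ might lie in very few sets $C_u$, so the multiplicity $k$ in Shearer's inequality would be too small. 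To fix this, I would first discard from $V$ all vertices of degree $0$ — these contribute only a factor $m$ each to $|\hom(L,\overline{A})|$ and can be absorbed — and more importantly use the $m$-good property: for any set $V'$ of size $\ge (1-c)n$, at least $\frac14|E| = mn/24$ edges hit $V'$. I would apply Shearer not to all of $\mathcal C$ but argue that the ``bad'' part of $V$ (vertices covered too few times) is small, so that restricting to a large subset $V'$ of $V$ we get a clean multiplicity bound.

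Concretely, here is the cleaner route I would take. For each star $C_u$, the projection $\F_u$ of $\hom(L,\overline{A})$ onto $C_u$ is contained in the set of pairs $(\text{color of } u, \text{colors of } N_L(u))$ such that the color of each neighbor is a non-neighbor in $A$ of the color of $u$; hence $|\F_u| \le \sum_{w \in V(A)} |\overline N(\{w\})|^{\,m} \le m \cdot \max_w |\overline N_A(w)|^m$. But this is too weak by itself. The key gain comes from Property~2 of $A$: whenever we fix colors on many neighbors of $u$ spanning a set $U'\subseteq V(A)$ of size $\ge m^{1-1/3g}$, the color of $u$ is forced into $\overline N_A(U')$, which has size $< m^{1-1/3g}$. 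So I would instead bound $|\F_u|$ by splitting on whether the $m$ neighbor-colors of $u$ span fewer than $m^{1-1/3g}$ distinct values of $V(A)$ or at least that many: in the first case there are at most $\binom{m}{m^{1-1/3g}}(m^{1-1/3g})^m \le m^{(1-\frac{1}{3g}+o(1))m}$ choices for the neighbors and $\le m$ for $u$; in the second case there are $< m^{1-1/3g}$ choices for $u$ (by Property~2) and $\le m^m$ for the neighbors. Either way, $|\F_u| \le m^{(1-\frac{1}{3g}+o(1))m + 1}$ roughly, and more carefully $|\F_u| \le m^{m - \frac{m}{3g} + o(m)}$ in the balance.

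Then Shearer's inequality with multiplicity $k$ gives $|\hom(L,\overline{A})|^k \le \prod_{u\in U} |\F_u| \le m^{(|U|)(m - \frac{m}{3g} + o(m))} = m^{\frac{n}{6}(m - \frac{m}{3g}) + o(nm)}$. Since $m = n^{1/6}$, the right-hand exponent is $\frac{n}{6} n^{1/6}(1 - \frac{1}{3g}) + o(n^{7/6})$; taking $k$-th roots where $k$ is the minimum multiplicity — which I need to show is close to the average multiplicity $\frac{|E|}{|V|} = \frac{(n/6)m}{n} = m/6$, using $m$-goodness to throw away the few vertices of $V$ covered much less often — yields $|\hom(L,\overline{A})| \le m^{\,6/(m/6) \cdot (\text{stuff})}$, and after bookkeeping this comes out to $m^{(\frac76 - \frac{c}{4g})n}$ for the claimed constant. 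The main obstacle I anticipate is the multiplicity issue: Shearer needs a uniform lower bound $k$ on how many stars cover each coordinate in $V$, but isolated or low-degree vertices of $V$ ruin this. The $m$-good hypothesis is precisely designed to handle this — it guarantees that deleting a $c$-fraction of $V$ still retains a constant fraction of the edges — so the delicate part is to combine ``delete the badly-covered vertices of $V$'' with ``the remaining vertices are each covered $\approx m/6$ times'' without losing more than a $(1-O(c))$ factor in the exponent, and to verify the constant $\frac76 - \frac{c}{4g}$ survives this loss. I would organize the argument by first proving a clean ``set version'': if $V^* \subseteq V$ consists of vertices of degree $\ge d$ in $L$, then Shearer on the stars restricted to $L[U \cup V^*]$ gives a bound with $k \ge $ (suitable function of $d$), and then optimize the choice of $d$ against the $m$-good property.
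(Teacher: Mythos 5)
Your plan is to apply Shearer directly to the family $\F=\hom(L,\overline A)\subseteq V(A)^{U\cup V}$ with the cover $\mathcal C=\{C_u\}_{u\in U}$ of stars $C_u=\{u\}\cup N_L(u)$. This orientation cannot work: you yourself observe that each $u\in U$ lies in exactly \emph{one} set $C_u$, so the uniform multiplicity $k$ in Shearer's inequality is forced to be $1$ no matter what you do to $V$. Deleting low-degree vertices of $V$, restricting to $V^*$, and the like cannot raise $k$ above $1$, because the bottleneck is the $U$-coordinates, not the $V$-coordinates. With $k=1$, Shearer reduces to the trivial product bound $|\F|\le\prod_u|\F_u|$, which loses the game. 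The subsequent claim ``$|\hom(L,\overline A)|^k\le\prod_{u\in U}|\F_u|$ with $k\approx m/6$'' is therefore unjustified. The paper's proof uses the opposite orientation: it projects onto $U$ only, taking the family $\F(V')$ of restrictions $\phi|_U\in V(A)^U$, and covers $U$ by $\{C_v=N_L(v)\}_{v\in V}$. Then $m$-goodness says \emph{exactly} that each $u\in U$ has degree $m$, so every coordinate of the family has multiplicity precisely $m$, and Shearer gives $|\F(V')|^m\le\prod_{v\in V}|\F_v(V')|$ cleanly. The irregularity of $\deg_L(v)$ for $v\in V$ then only affects the right-hand side, where it is controlled by the edge-incidence property of $m$-goodness ($V'$ meets at least $mn/24$ edges) rather than by a minimum-multiplicity requirement.

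A second gap is in your per-star bound. In the case where the neighbor colors of $u$ span at least $m^{1-1/3g}$ values of $V(A)$, you correctly note that Property~2 restricts $\phi(u)$ to fewer than $m^{1-1/3g}$ choices, but the $m$ neighbors are unconstrained, so $|\F_u|\le m^{1-1/3g}\cdot m^m=m^{m+O(1)}$, not $m^{m(1-1/3g)+o(m)}$. The power savings is only realized on the single coordinate $u$, which is exactly why you want $u$ (and not its neighbors) to appear with high multiplicity in the Shearer cover. The paper exploits this by the definition of $V'$: for $\phi$ with $|\phi^*(V)|$ large, the set $V'$ of $v$ with $|\phi^*(\{v\})|\ge m^{1-1/3g}$ has size at least $(1-c)n$, and for $v\in V'$ Property~2 gives $|\phi(N_L(v))|<m^{1-1/3g}$, so the projection onto $C_v=N_L(v)$ is small: $|\F_v(V')|\le 2^m\bigl(m^{1-1/3g}\bigr)^{\deg_L(v)}$. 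The pre-processing step, splitting on whether $|\phi^*(V)|$ is below or above $m^{(1-c/3g)n}$ and discarding the first case by a crude count, is also essential and is not present in your sketch.

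In short, the right move is to flip your projection and cover: restrict the family to $U$-coordinates, cover $U$ by the $V$-side neighborhoods $N_L(v)$, and use Property~2 to shrink those projections. Your proposed orientation cannot achieve nontrivial Shearer multiplicity and so cannot close.
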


\begin{proof}
If $\phi\in\hom(L,\overline{A})$ and $S\subseteq U\cup V$, define $\phi^{*}(S)$ to
be the set of all homomorphisms $\phi'$ which agree with $\phi$
outside $S$. Then, we see
by the trivial bound $m^{n/6}$ on the number of mappings from $U$ to the vertex set of $\overline{A}$ that
\begin{equation}\label{eq:phi-negligible}
|\{\phi:|\phi^{*}(V)|<m^{\left(1-\frac{c}{3g}\right)n}\}| <  m^{n/6}m^{\left(1-\frac{c}{3g}\right)n} < \frac{1}{2}m^{\left(\frac{7}{6}-\frac{c}{4g}\right)n}
\end{equation}
when $m$ is sufficiently large. It therefore suffices to count $\phi$ for which $|\phi^{*}(V)|\ge m^{(1-\frac{c}{3g})n}$. For such a $\phi$, we claim that the set $V'$ of vertices $v \in V$ for which $|\phi^{*}(\{v\})|\ge m^{1-\frac{1}{3g}}$ satisfies $|V'| \geq (1-c)n$. Indeed, if otherwise, as $V$ is an independent set, then
\[
|\phi^*(V)| = \prod_{v\in V} |\phi^*(\{v\})| = \prod_{v\in V'} |\phi^*(\{v\})| \cdot \prod_{v\in V\setminus V'} |\phi^*(\{v\})| \le m^{|V'|n+|V\setminus V'|\left(1-\frac{1}{3g}\right)n} < m^{\left(1-\frac{c}{3g}\right)n},
\]
which is a contradiction.

Applying Property 2 of $A$ defined at the end of Section~\ref{subsec:A-construction}, for each $v\in V'$, the size of $\phi(N(u))$ must be less than $m^{1-\frac{1}{3g}}$.
Also, since $L$ is $m$-good and $|V'|\ge (1-c)n$, $V'$ is incident
to at least $mn/24$ edges of $L$.

We condition on the choice of $V'$. Let $\F(V')$ be
the set of possible restrictions $\phi|_{U}$ of homomorphisms $\phi$ for
which $|\phi^{*}(\{v\})|\ge m^{1-\frac{1}{3g}}$ on $v\in V'$. Letting $C_{v}=N(v)$ for each $v\in V$, we will use Lemma~\ref{lem:Shearer}
to bound the size of $\F(V')$. We think of $\F(V')$ as a family of vectors in $V(A)^{U}$, and $\F_{v}(V')$ will be the projection of $\F(V')$ onto the coordinates in $C_v = N(v)$.

Since $L$ is $m$-good, each $u\in U$ appears in exactly $m$ of the sets $C_{v}$.
Thus,
\[
|\F(V')|^{m}\le\prod_{v\in V}|\F_{v}(V')|.
\]

We also know that if $v\in V'$, then
\[
|\F_{v}(V')|<2^{m}\left(m^{1-\frac{1}{3g}}\right)^{\deg v},
\]
since there are at most $2^m$ choices of the set of colors $\phi(u)$ to appear in $N(v)$, and at most $m^{1-\frac{1}{3g}}$ colors to use on each vertex. Using the trivial bound for vertices outside $V'$, we get
\[
|\F(V')|^{m} \le\prod_{v\in V}|\F_{v}(V')| < 2^{mn}\prod_{v\in V'}m^{\left(1-\frac{1}{3g}\right)\deg v}\prod_{v\not\in V'}m^{\deg v}.
\]
Because the total degree from $V$ is just $|E|=mn/6$ and at least $mn/24$ of these edges are incident to $V'$, this inequality reduces to
\begin{align*}
|\F(V')|^m & \le 2^{mn}m^{\frac{1}{6}mn - \frac{1}{3g}\cdot \frac{1}{24}mn}, \\
|\F(V')| & \le 2^n m^{\left(\frac{1}{6} - \frac{1}{72g}\right)n}.
\end{align*}
There are at most $2^n$ choices of $V'$, and for each element $\phi_U\in \F(V')$ there are at most $m^n$ ways to extend it to $V$. Thus, for $m$
sufficiently large,
\begin{align*}
|\{\phi:|\phi^{*}(V)|\ge m^{\left(1-\frac{c}{3g}\right)n}\}| & \le m^{n}\sum_{V'}|\F(V')|\\
 & \le 2^{2n}m^{\left(\frac{7}{6}-\frac{1}{72g}\right)n}\\
 & \le \frac{1}{2}m^{\left(\frac{7}{6}-\frac{c}{4g}\right)n}.
\end{align*}
Thus, with (\ref{eq:phi-negligible}), we see that there are at most $m^{\left(\frac{7}{6}-\frac{c}{4g}\right)n}$ homomorphisms $\phi$ from $L$ to $\overline{A}$.
\end{proof}

We have all the ingredients to complete the proof of the main lemma.

\begin{proof}[Proof of Lemma~\ref{lem:main}.] \label{proof:lem-main}
Explicitly, we will show that if $m=n^{\frac16}$, $n$ is sufficiently large in terms of $G$, and $\F$ is the family of triangle-free colorings of $K_{n,n,n}^*$, then
\[
|\F| \le m^{\left(6-\frac{c^2}{8g}\right)n^2},
\]
where $c=10^{-4}$ as before.

With 
\[
\iota(\chi)=\Pr_{i\in I}\left[|\hom(G_{i}(\chi),\overline{A})|\ge m^{2-\frac{c^2}{g}}\right],
\]
Lemma~\ref{lem:iota-F1} shows
\begin{equation}\label{eq:iota-big}
\mathbb{E}_{\F\backslash\F_{1}}[\iota(\chi)]\ge\frac{3}{4}.
\end{equation}
On the other hand, if $\chi\in\F\backslash(\F_{2}\cup\F_{3})$
we know by Lemma~\ref{lem:Gi-dense} that there are a total of at least
$(1-3c)n^{3}\ge\frac{11}{12}n^{3}$ edges among the graphs $G_{i}^{*}(\chi)$.
In particular, for at least $\frac{7}{8}n$ values of $i$, $|E(G_{i}^{*}(\chi))|\ge n^{2}/3$.
By Lemma~\ref{lem:m-good-sub}, for such an $i$, if $\chi'$ is
randomly resampled from $\chi^{*}(E_{3})$ then with high probability $G_{i}(\chi')$
contains an $m$-good subgraph $L$ satisfying the conditions of Lemma
\ref{lem:good-low-hom}. Thus, for these $i$, with high probability
\[
|\hom(G_{i}(\chi'),\overline{A})|\le m^{\frac{5}{6}n}\cdot |\hom(L,\overline{A})|\le m^{\left(2-\frac{c}{4g}\right)n} < m^{\left(2-\frac{c^2}{g}\right)n}
\]
by Lemma~\ref{lem:good-low-hom}, and thus $\iota(\chi')=0$. For the remaining at most $\frac{1}{8}n$ values of $i$, we just bound $\iota(\chi')\le 1$. It follows that
\[
\mathbb{E}_{\chi' \in \chi^*(E_3)}[\iota(\chi')]\le\frac{1}{8}\cdot 1 + \frac{7}{8} \cdot o(1) = \frac{1}{8} + o(1),
\]
where the error term goes to zero as $m\rightarrow\infty$.

But $\chi'$ is just a uniform random element of $\chi^*(E_3)$, so $\chi'|_{E_1 \cup E_2} = \chi|_{E_1 \cup E_2}$. In particular, $\chi'(E_3)=\chi(E_3)$, and if $\chi \not\in \F_{2}(c)$ we also know $\chi \not\in \F_{2}(c)$. If originally $\chi$ was chosen out of $\F\backslash(\F_{2}\cup\F_{3})$ uniformly at random, then the marginal distribution of $\chi'$ is also uniformly random from $\F\backslash(\F_{2}\cup\F_{3})$,
so we have
\begin{equation}
\mathbb{E}_{\F\backslash(\F_{2}\cup\F_{3})}[\iota(\chi)]\le\frac{1}{8}+o(1) \le \frac{1}{4},\label{eq:iota-small}
\end{equation}
when $m$ is large enough.

Suppose $\F' = \F \setminus (\F_{1}\cup\F_{2}\cup\F_{3})$. If $\mathbb E_{\F'}[\iota(\chi)] \le \frac{1}{2}$, then we claim that $\F'$ contains at most half the complement of $\F_1$. If not,
\[
\E_{\F \setminus \F_1}[\iota(\chi)] < \frac{1}{2} \E_{\F'}[\iota(\chi)] + \frac{1}{2} \cdot 1 \leq \frac{3}{4},
\]
which contradicts~(\ref{eq:iota-big}). Thus, $|\F'| \le |\F_{2}\cup\F_{3}|$. If $\mathbb E_{\F'}[\iota(\chi)] > \frac{1}{2}$, then we can show by the same argument that (\ref{eq:iota-small}) implies $|\F'| \le |\F_1|$.

Either way, we see that
\[
|\F'|\le\max(|\F_{1}|,|\F_{2}\cup\F_{3}|)\le|\F_{1}\cup\F_{2}\cup\F_{3}|,
\]
whence by Lemma~\ref{lem:family-counting},
\[
|\F|\le2|\F_{1}\cup\F_{2}\cup\F_{3}|\le m^{\left(6-\frac{c^2}{8g}\right)n^{2}},
\]
for $m$ sufficiently large, completing the proof.
\end{proof}

\section{Proof of Theorem~\ref{thm:linkcliqueversusclique}} \label{sec:link-upper-bound}
In this section, we prove Theorem~\ref{thm:linkcliqueversusclique} using the vertex online Ramsey game defined by Conlon, Fox, and Sudakov~\cite{CoFoSu}. We generalize their definition of the game to all (vertex) ordered graphs.

For two ordered graphs $H_1, H_2$, the vertex online Ramsey game is a graph-building game played between two players Builder and Painter. The games starts from the empty graph, and at step $i$ a new vertex $v_i$ is revealed. For every existing vertex $v_j$, $j=1,\ldots, i-1$, Builder decides, in order, whether or not to draw the edge $\{v_j, v_i\}$. If he does draw the edge, Painter has to color it either red or blue immediately. Builder wins when the current graph contains either a red (ordered) copy of $H_1$ or a blue (ordered) copy of $H_2$.

Conlon, Fox and Sudakov studied the game in the case when $H_1$ and $H_2$ are complete graphs. In our application, we will pick $H_1$ to be the {\it forward star} $K_{1,s-1}^*$, which we define to be the ordered graph on $s$ vertices and $s-1$ edges where the first vertex is adjacent to each of the others. The other graph $H_2$ we will pick to be a complete graph $K_{n-1}$. Thus Builder wins when there is either a vertex $v_i$ with $s-1$ red edges to later vertices $v_j > v_i$, or a blue $K_{n-1}$.

The next lemma is a straightforward modification of Theorem 2.1 from \cite{CoFoSu}, which connects the vertex online Ramsey game to the Ramsey numbers of $3$-graphs.

\begin{lem}\label{lem:vertex-online}
Suppose in the vertex on-line Ramsey game that Builder has a strategy which ensures
a red $K_{1,s-1}^*$ or a blue $K_{n-1}$ using at most $v$ vertices, $r$ red edges, and in total $m$ edges. Then, for any $0 < \alpha \le \frac{1}{2}$,
\[
r(L_{K_s}, K_n^{(3)}) \leq (v+1)\alpha^{-r}(1-\alpha)^{r-m}.
\]
\end{lem}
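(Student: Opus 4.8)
The plan is to reduce the off-diagonal hypergraph Ramsey bound to the vertex online Ramsey game by a probabilistic coloring argument, exactly in the style of Conlon, Fox, and Sudakov. Suppose Builder has the claimed strategy using at most $v$ vertices, $r$ red edges, and $m$ edges total. I would take $N = (v+1)\alpha^{-r}(1-\alpha)^{r-m}$ and suppose for contradiction that we are given a $3$-graph $\mathcal H$ on $[N]$ which is $L_{K_s}$-free; the goal is to find an independent set of size $n$, i.e. a clique in $\overline{\mathcal H}$. Recall that $\mathcal H$ being $L_{K_s}$-free means the link of every vertex is $K_s$-free, i.e. contains no $s$-clique.

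First I would set up the recursive/greedy vertex-selection procedure. We build a sequence of vertices $w_1 < w_2 < \cdots$ from $[N]$ together with a shrinking ``candidate set'' of later vertices, and simulate the online game: the vertices $v_i$ of the game correspond to the chosen $w_i$, and when Builder draws an edge $\{v_j, v_i\}$ we color it red if $\{w_j, w_i, x\} \in E(\mathcal H)$ tends to happen ``often'' over candidates $x$, and blue otherwise — more precisely, at each step we pick the next vertex $w_i$ from the current candidate set so as to survive, and then split the remaining candidate set according to Painter's responses, keeping a proportion roughly $\alpha$ of the candidates on a red edge and roughly $1-\alpha$ on a blue edge. The key point is that a red edge $\{v_j,v_i\}$ should encode that $w_i$ lies in the link of $w_j$ (so that a red $K_{1,s-1}^*$ at $v_j$ forces an $s$-clique in the link of $w_j$, contradicting $L_{K_s}$-freeness), while a blue edge $\{v_j,v_i\}$ encodes $\{w_j, w_i, x\} \notin E(\mathcal H)$ for all surviving candidates $x$, so that a blue $K_{n-1}$ together with one extra surviving candidate yields a clique of size $n$ in $\overline{\mathcal H}$. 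Since Builder's strategy guarantees one of these two outcomes within $v$ vertices, and each step multiplies the candidate set size by at least $\alpha$ (on red edges, of which there are at most $r$) or $1-\alpha$ (on the at most $m-r$ blue edges), plus loses at most one vertex per step, one checks that starting from $N$ candidates we never run out: $N \geq (v+1)\alpha^{-r}(1-\alpha)^{r-m}$ is exactly what is needed for the process to reach the end of the game. Hence $\mathcal H$ has an independent set of size $n$, proving $r(L_{K_s}, K_n^{(3)}) \leq N$.

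I expect the main obstacle to be bookkeeping the ``two-sided'' nature of the coloring rule so that both winning conditions translate correctly. The subtlety is that whether $\{w_j, w_i, x\}$ is an edge depends on the triple, not on pairs, so to make ``red'' mean ``$w_i$ is in the link of $w_j$'' we need the decision for the edge $\{v_j, v_i\}$ to be made consistently for the single vertex $w_i$ against many candidates $x$; this is handled by processing Builder's edges in the prescribed order and, for each new edge, partitioning the current candidate pool into those $x$ with $\{w_j, w_i, x\}$ an edge (the ``red side'') and those without (the ``blue side''), then letting Painter color according to which side is larger — or rather, choosing to keep whichever side has size at least an $\alpha$ (resp. $1-\alpha$) fraction, with Painter forced into the corresponding color. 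One must check that at the end, a blue $K_{n-1}$ on $\{w_{i_1},\ldots,w_{i_{n-1}}\}$ plus any single remaining candidate $x$ gives $n$ vertices spanning no edge of $\mathcal H$: for each pair $w_{i_a}, w_{i_b}$ the blue coloring guarantees $\{w_{i_a}, w_{i_b}, y\}\notin E(\mathcal H)$ for all candidates $y$ surviving past that step, in particular for all the later $w_{i_c}$ and for $x$, which covers all triples.

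The only remaining routine step is the arithmetic verifying that the candidate set never empties: after all $v' \le v$ vertices are played with $r' \le r$ red edges and $m' - r'$ blue edges (where $m' \le m$), the surviving candidate count is at least $N \cdot \alpha^{r'} (1-\alpha)^{m'-r'} - (v'+1)$ or so — one tracks the ``$-1$ per vertex'' loss carefully and uses $\alpha \le 1/2 \le 1-\alpha$ together with monotonicity in $m'$ and $r'$ to conclude this stays positive. I would state this as a short inductive claim on the number of game steps and then close the argument by invoking Builder's winning guarantee. Overall the proof is a direct adaptation of the Conlon–Fox–Sudakov framework, with the link-hypergraph structure of $L_{K_s}$ replacing the clique $K_s^{(3)}$ in the red winning condition.
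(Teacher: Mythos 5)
Your proposal is essentially the paper's own proof: you maintain a shrinking candidate set, define Painter's color by whether the fraction of remaining candidates $x$ with $\{w_j,w_i,x\}$ an edge is at least $\alpha$, keep the corresponding side of the partition, and translate a red $K_{1,s-1}^*$ into a clique in a link and a blue $K_{n-1}$ into an independent triple system. One small slip to fix in the write-up: the red forward star $K_{1,s-1}^*$ at $w_j$ by itself only gives an $(s-1)$-clique in the link of $w_j$ (from the $s-1$ leaves), so to reach a $K_s$ in the link (and hence an $L_{K_s}$) you must append one extra surviving candidate, exactly as you do in the blue case; with that addendum the argument matches the paper's.
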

\begin{proof}
Let $N=(v+1)\alpha^{-r}(1-\alpha)^{r-m}$, and let $\Gamma$ be a $3$-graph on $N$ vertices. We will use the given Builder strategy to find either a $L_{K_s}$ in $\Gamma$ or a $K_n^{(3)}$ in $\overline{\Gamma}$. 

To do so, we will pick out vertices $v_1,\ldots, v_h$ of $\Gamma$ one at a time in a certain well-defined vertices explained later. Builder uses his strategy to build an auxiliary graph $G$ on these $v_i$ as they arrive, and Painter will color the edges using a deterministic rule that we explain later. The two-coloring of $\Gamma$ will have the property that if $1\le i < j \le h$ and $\{v_i,v_j\} \in E(G)$ is red, then for all $j < k \le h$, $\{v_i,v_j,v_k\} \in E(\Gamma)$. Also, if $\{v_i,v_j\} \in E(G)$ is blue, then $\{v_i,v_j,v_k\} \not\in E(\Gamma)$ for all $j < k \le h$. 

After the $a$-th step of the process, we have picked $v_1,\ldots, v_a$, and we keep track of a set $S$ of candidates for vertex $v_{a+1}$. This set $S$ is defined to contain all $v\in V(\Gamma)\setminus \{v_1,\ldots, v_a\}$ such that for all $1\le i < j \le h$, $\{v_i,v_j,v\} \in E(\Gamma)$ if $\{v_i,v_j\} \in E(G)$ is red and $\{v_i,v_j,v\} \not\in E(\Gamma)$ if $\{v_i,v_j\} \in E(G)$ is blue. Thus every $w\in S$ is a valid choice for $v_{a+1}$, and in the beginning of step $a+1$, we pick any such $w$ to be $v_{a+1}$ and delete it from $S$. To start the process, in the first step we pick vertex $v_1$ arbitrarily and we have $S= V(\Gamma) \setminus \{v_1\}$. 

Then, when Builder draws a new edge between $v_i$ and $v_j$, Painter counts the number of triples of the form $\{v_i, v_j, v\}$ which are edges of $\Gamma$, where $v$ ranges through the available candidates $S$. If at least $\alpha |S|$ of these triples are edges, then Painter colors $\{v_i, v_j\}$ red. Otherwise, Painter colors it blue. In both cases, $S$ is replaced by the appropriate subset of valid candidates for $v_{a+1}$ after the new edge is drawn and colored.

The rules above imply that each time a red edge is drawn, $|S|$ shrinks by at most a factor of $\alpha$, and each time a blue edge is drawn, $|S|$ shrinks by at most a factor of $1-\alpha$. Also, a single vertex is removed from $S$ every time a new vertex $v_{a+1}$ is added to $G$.

Builder and Painter continue playing this game until either Builder wins or we run out of candidate vertices in $S$.

Builder's strategy ensures finding a red $K_{1,s-1}^*$ or a blue $K_{n-1}$ using at most $v$ vertices, $r$ red edges, and $m$ total edges. Recall that $\alpha \le \frac{1}{2}$ and initially we chose
\[
|S| = N = (v+1)\alpha^{-r}(1-\alpha)^{r-m}.
\]
The size of the candidate set $S$ is reduced to at least $\alpha (|S|-1)$ whenever an edge is colored red, and at least $(1-\alpha)(|S|-1)$ whenever an edge is colored blue. Also, $S$ decreases by one at the beginning of each step when we pick the vertex to add to the sequence. It follows from our choice of $N$ that Builder will be able to win before $S$ becomes empty. If $h$ vertices have been built at the end of the game, since $S$ is still nonempty we can pick an arbitrary vertex of $S$ and call it $v_{h+1}$.

Suppose at the end of the game that $G$ contains a red forward star $K_{1,s-1}^*$ on vertices $v_{i_1},\ldots, v_{i_s}$, where $v_{i_1}$ has a red edge to each of the remaining vertices and $i_1<i_2<\ldots<i_s \leq h$. Then, every triple $(v_{i_1}, v_j, v_k)$, where $j < k$ are elements of $\{i_2,\ldots, i_s, h+1\}$, must be an edge of $\Gamma$. Thus $\Gamma$ contains a copy of $L_{K_s}$ on the vertices $v_{i_1},\ldots, v_{i_s}, v_{h+1}$.

Otherwise, suppose $G$ contains a blue clique $K_{n-1}$ on vertices $v_{i_1},\ldots, v_{i_{n-1}}$ with $i_1<i_2<\ldots<i_{n-1}<h+1$. Then, no triple $(v_i, v_j, v_k)$, where $i<j<k$ are elements of $\{i_1,\ldots, i_s, h+1\}$, can be an edge of $\Gamma$. Thus $\overline{\Gamma}$ contains a copy of $K_n^{(3)}$ on the vertices $v_{i_1},\ldots, v_{i_{n-1}}, v_{h+1}$.
\end{proof}

To prove Theorem~\ref{thm:linkcliqueversusclique}, it suffices to show there is an appropriate Builder strategy to apply Lemma~\ref{lem:vertex-online}, which is given by the following lemma. 

\begin{lem}\label{lem:vertex-online1}
In the vertex on-line Ramsey game, Builder has a strategy which ensures a red $K_{1,s-1}^*$ or a blue $K_{n-1}$ using at most $n-1+(s-2)(n-2)$ vertices, $(s-2)(n-2)+1$ red edges, and in total $(s-1){n-1 \choose 2}$ edges. 
\end{lem}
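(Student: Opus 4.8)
The plan is to let Builder greedily grow a blue clique. Builder maintains a blue clique $C=\{c_1,c_2,\ldots\}$, listed in the order vertices were added to $C$, starting from $C=\emptyset$. When a new vertex $v$ is revealed and $|C|=t$, Builder draws the edges $\{c_1,v\},\{c_2,v\},\ldots$ in this order; the instant Painter colors one of them red, say $\{c_j,v\}$, Builder stops drawing to $v$, permanently discards $v$, and records one more red forward edge at $c_j$. If instead all $t$ drawn edges are blue, $v$ becomes $c_{t+1}$, and if $t+1=n-1$ Builder has a blue $K_{n-1}$ and wins. If at any point some $c_j$ has accumulated $s-1$ red forward edges, Builder has a red $K_{1,s-1}^*$ centered at $c_j$ (using later vertices) and wins. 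First I would check that the game must end in a Builder win: as long as no win has occurred, every clique vertex carries at most $s-2$ red forward edges and $|C|\le n-2$ (reaching $n-1$ is an immediate blue win), and each new vertex either enlarges $C$ or increments some red count, so after finitely many revealed vertices one of the two winning configurations appears.

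Next I would establish the vertex and red-edge bounds. Since Builder stops at the first red edge of each discarded vertex, there is a bijection between discarded vertices and red edges, so the number of vertices used equals $|C_{\mathrm{final}}|+(\#\,\textrm{red edges})$. If the game ends with a blue $K_{n-1}$, then the last clique vertex $c_{n-1}$ is created and the game halts immediately, so $c_{n-1}$ has $0$ red forward edges while $c_1,\ldots,c_{n-2}$ have at most $s-2$ each; hence at most $(s-2)(n-2)$ red edges and at most $(n-1)+(s-2)(n-2)$ vertices. If the game ends with a red $K_{1,s-1}^*$, then at that moment $|C|=t\le n-2$, one clique vertex carries $s-1$ red forward edges and the other $t-1$ carry at most $s-2$ each, giving at most $(s-1)+(t-1)(s-2)\le (s-2)(n-2)+1$ red edges and at most $t+(s-2)(n-2)+1\le (n-1)+(s-2)(n-2)$ vertices.

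Finally I would count the total number of edges, which I expect to be the delicate step. The drawn edges fall into three classes: the $\binom{|C_{\mathrm{final}}|}{2}\le\binom{n-1}{2}$ blue edges inside the final clique; the red edges, at most $(s-2)(n-2)+1$; and the blue edges "wasted'' on discarded vertices. A vertex discarded because of a red edge from $c_i$ received exactly $i-1$ blue edges beforehand (the edges to $c_1,\ldots,c_{i-1}$), and $c_i$ is responsible for at most $s-2$ such discards, with the single exception of $s-1$ for the clique vertex that triggers a red win; moreover the last clique vertex $c_{n-1}$ causes no discards. Summing, the wasted blue edges total at most $(s-2)\sum_{i=1}^{n-2}(i-1)=(s-2)\binom{n-2}{2}$ in the blue-win case, and an analogous bound in the red-win case. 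Adding the three contributions and invoking the identity $\binom{n-2}{2}+(n-2)=\binom{n-1}{2}$ collapses everything to exactly $(s-1)\binom{n-1}{2}$ in both cases. The main obstacle is precisely this bookkeeping: one must exploit that the final clique vertex contributes no red or wasted edges, that the worst case for Painter is to place each red edge on the highest-index available clique vertex, and the above telescoping identity; the naive bound that ignores these points is off by a constant factor in the exponent-relevant quantities.
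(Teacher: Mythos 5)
Your proposal is correct and follows essentially the same strategy as the paper: Builder greedily grows a blue clique, discards each new vertex at its first red edge, wins by pigeonhole once some clique vertex accumulates $s-1$ red forward edges or the clique reaches size $n-1$, and the edge count collapses via $\binom{n-2}{2}+(n-2)=\binom{n-1}{2}$ exactly as in the paper's accounting. The only cosmetic difference is that you split the total-edge count into blue-win and red-win cases, whereas the paper gives a single unified tally.
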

\begin{proof}
Consider the following Builder strategy in the vertex online Ramsey game. Builder keeps track of two vertex subsets, $U$ and $W$, where $U$ is a blue clique of order at most $n-1$ and every vertex of $W$ has a red edge to some vertex in $U$. 

Initially, $U=\{v_1\}$ is a single vertex and $W$ is empty. Each time a new vertex $v_a$ is introduced, Builder draws all edges between $v_a$ and $U$, stopping if an edge is painted red. If Painter paints all of them blue, then $v_a$ is added to $U$. Otherwise, $v_a$ is added to $W$. In this way, $U$ and $W$ partition the set of vertices that are introduced. We let $G$ denote the current graph, which has vertex set $U \cup W$.  

We claim that Builder has already won if $|U|\ge n-1$ or $|W| > (s-2)(n-2)$. Indeed, if $|U| \ge n-1$, then $G[U]$ contains a blue $K_{n-1}$. Otherwise, if $|W| > (s-2)(n-2) \ge (s-2) |U|$, then some vertex in $U$ must have at least $s-1$ red edges to $W$. These edges form a red forward star $K_{1,s-1}^*$ because red edges out of a vertex of $U$ can only be drawn to vertices of $W$ that appeared later in the game.

Therefore, Builder's strategy ensures a red $K_{1,s-1}^*$ or a blue $K_{n-1}$ using at most $ n-1 + (s-2)(n-2)$ vertices. Since there are no red edges within $U$ and each vertex of $W$ is in at most one red edge, Builder uses at most $(s-2)(n-2)+1$ red edges. Before the last vertex is added, there are at most $n-2$ vertices in $U$, and the $i^{\textrm{th}}$ vertex in $U$ is in at most $s-2$ red edges, and each of these at most $s-2$ vertices are in exactly $i$ edges. So, before the last vertex, at most ${n-2 \choose 2}$ edges are in $U$, and at most $\sum_{i=1}^{n-2}i(s-2)={n-1 \choose 2}(s-2)$ edges contain a vertex in $W$. The last vertex added in the game is in at most $n-2$ edges. In total, the number of edges in the game is at most 
\[{n-2 \choose 2}+{n-1 \choose 2}(s-2)+n-2=(s-1){n-1 \choose 2}.\] 
\end{proof}

This is sufficient to prove the upper bound for Theorem~\ref{thm:linkcliqueversusclique}.

\begin{proof}[Proof of Theorem~\ref{thm:linkcliqueversusclique}.]

Apply Lemma~\ref{lem:vertex-online} with $\alpha=1/n$, $v=n-1+(s-2)(n-2)<sn$, $r=(s-2)(n-2)+1$, and $m=(s-1){n-1 \choose 2}$ to get
\[
r(L_{K_s}, K_n^{(3)}) < (sn)\alpha^{-r}(1-\alpha)^{-m}<
(n\sqrt{e})^{sn}<(2n)^{sn},\]
where in the second inequality we used  $(1-\frac{1}{n})^{n-1}>1/e$. 
\end{proof}

\section{Recursive bounds on hypergraph Ramsey numbers} \label{sec:recursive}

Let $H,G,F$ be $k$-graphs. For a vertex $v$ of $H$ and a positive integer $t$, let $H(v,t)$ be the $k$-graph on $|H|+t-1$ vertices formed by adding $t-1$ copies of $v$ to $H$. Let $H(v,F)$ be the $k$-graph on $|H|+|F|-1$ vertices formed by adding $|F|-1$ copies of $v$ to $H$ which together with $v$ induce a copy of $F$.

We have the following recursive bounds on Ramsey numbers of blow-up hypergraphs versus other graphs. Special cases of these results have already been observed in \cite{CoFoSu,ErHa,MuRo}. The proof is essentially the same, and included here for completeness. 

\begin{prop}
\label{prop:blowup}
If $H$ is a $k$-graph on $h$ vertices, $v$ is a vertex of $H$, $G$ is a $k$-graph, and $t$ is a positive integer, then $r(H(v,t),G) \leq t\cdot 2^h \cdot r(H,G)^{h-1}$. 
\end{prop}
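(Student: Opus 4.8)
The plan is to apply Ramsey's theorem iteratively to build up a large blown-up copy of the vertex $v$ inside a coloring that avoids both $H(v,t)$ in red and $G$ in blue. Set $R = r(H,G)$ and suppose $\Gamma$ is a red/blue coloring of the complete $k$-graph on $N = t\cdot 2^h\cdot R^{h-1}$ vertices with no red $H(v,t)$ and no blue $G$. I would first fix an ordering $v = v_h, v_{h-1}, \dots, v_1$ of the vertices of $H$, placing the special vertex $v$ last, and process them in the order $v_1, v_2, \dots, v_{h-1}$. Maintain a ``live'' vertex set $W$, initially all of $V(\Gamma)$, together with a partial embedding that has already placed $v_1, \dots, v_{i-1}$ as \emph{single} vertices $w_1, \dots, w_{i-1}$ outside $W$, with the property that every $k$-subset of $\{w_1,\dots,w_{i-1}\}\cup W$ meeting $W$ in the ``right'' number of points has already been forced to the correct color dictated by $H$.

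The key step is the cleanup at each stage. When we come to place $v_i$, we want to pick $w_i\in W$ so that, for every choice of a $(k-1)$-subset $S$ of $\{w_1,\dots,w_{i-1}\}$ (there are at most $2^{h}$ such subsets, crudely), the color of $S\cup\{w_i, w\}$-type edges is the same for all remaining $w\in W$ — more precisely, we split $W$ according to, for each relevant small subset, whether the edge is red or blue, and pass to the majority class. Each such split costs at most a factor $2$ in $|W|$, and there are fewer than $2^{h}$ constraints to satisfy across all stages combined (I would track the bookkeeping so the \emph{total} loss over all $h-1$ stages is a single factor $2^h$), so after placing all of $v_1,\dots,v_{h-1}$ we still have a live set $W'$ of size at least $t\cdot R^{h-1}/(\text{something bounded by }2^h)$ — arrange the constants so that $|W'|\ge t\cdot R$. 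Now within $W'$, every $k$-set that uses one vertex from $W'$ (or two copies thereof) behaves uniformly with respect to the already-placed $w_1,\dots,w_{h-1}$, so each vertex of $W'$ is a legitimate candidate to play the role of $v = v_h$ \emph{and} of all its $t-1$ clones simultaneously — the clones impose no new edge constraints among themselves since they are copies of the same vertex of $H$ (here one uses that $H$ is a simple $k$-graph, so a $k$-set with a repeated vertex is not an edge). The only remaining obstruction is the edges internal to $W'$: among any $R = r(H,G)$ vertices of $W'$ there is a red $H$ or a blue $G$; the blue $G$ is an immediate contradiction, and a red copy of $H$ — whose image of $v$ lies in $W'$ — extends, by choosing $t-1$ further vertices of $W'$ (possible since $|W'|\ge tR > R + t - 1$) to a red $H(v,t)$, the final contradiction. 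I would iterate the ``find a red $H$ or blue $G$, if red $H$ stash its $v$-image and delete it, repeat'' loop: each success removes one vertex, and after $t-1$ removals together with the $h$ vertices of the last red $H$ we have consumed fewer than $tR$ vertices, so the loop cannot stall; this yields the red $H(v,t)$.

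The arithmetic to close the bound is then routine: starting from $N = t\cdot 2^h\cdot R^{h-1}$, the majority-vote cleanup over the first $h-1$ vertices loses a factor of at most $2^h$ total and passes to subsets — I would be a little careful here and note that when placing $v_i$ we also \emph{delete} $w_1,\dots,w_{i-1}$ and one chosen $w_i$ from $W$, but this removes at most $h$ vertices overall, negligible against the multiplicative losses — leaving $|W'|\ge t\cdot R$, which is exactly what the extension argument in the previous paragraph needs.

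The main obstacle I anticipate is purely bookkeeping: making sure the factor-of-$2$ losses from the majority-vote splittings are organized so their product is bounded by $2^h$ rather than something like $2^{h^2}$ (since naively each of the $h-1$ stages could involve up to $2^{h-1}$ separate binary splits). The resolution is that at stage $i$ one only needs uniformity with respect to the \emph{particular} small subsets of $\{w_1,\dots,w_{i-1}\}$ that could complete to an edge of $H$ together with $v$ and some future clones — and in fact it suffices to do all the splitting in one pass at the very end relative to $\{w_1,\dots,w_{h-1}\}$, where the number of relevant $(k-1)$- and $(k-2)$-subsets is at most $\binom{h-1}{k-1}+\binom{h-1}{k-2} < 2^{h-1}$, giving the clean factor $2^h$ after accounting for a final factor $2$ of slack. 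I would present this as the one careful computation and leave the rest to the reader as ``straightforward.''
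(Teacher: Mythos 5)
Your proposal takes a fundamentally different route from the paper's and, as written, does not work. The paper's proof is a short counting argument: form the auxiliary $h$-graph $\Gamma_H$ on $V(\Gamma)$ whose edges are the $h$-sets spanning a copy of $H$; since $\Gamma$ is $H(v,t)$-free, each copy of $H\setminus\{v\}$ extends to fewer than $t$ copies of $H$, so $\Gamma_H$ has at most $tN^{h-1}$ edges; then the standard deletion method (sample a random subset of density $p$ with $p^{h-1}=N^{2-h}/2t$, delete a vertex from each surviving edge) produces an independent set of $\Gamma_H$ of size $r(H,G)$, i.e.\ an $H$-free vertex set, and Ramsey on that set yields $G$ in $\overline{\Gamma}$. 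No embedding is built incrementally and no uniformity/cleanup over a live set is needed.

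Your greedy stepping-up argument has several genuine gaps. (i) Majority-vote splitting only yields \emph{uniformity} of colors from the placed base $w_1,\dots,w_{h-1}$ to $W'$; it does not make those colors red, and you need them red (on the $(k-1)$-subsets corresponding to edges of $H$ through $v$) to build a red $H(v,t)$. Nothing in the proposal addresses the case where the majority color is blue. (ii) There is also no mechanism to ensure the induced coloring on the placed base itself is a red $H\setminus\{v\}$ --- you place $w_1,\dots,w_{h-1}$ and then clean up ``relative to'' them, but you never argue that the edges of $H$ within the base are red, which is itself a Ramsey-type task. (iii) The concluding ``stash the $v$-image and iterate'' step is incorrect: each iteration finds a red copy of $H$ entirely \emph{inside} $W'$, with its own image of $H\setminus\{v\}$ in $W'$ rather than on the placed base $w_1,\dots,w_{h-1}$. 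Stashed $v$-images from different iterations therefore extend different, unrelated bases and cannot be assembled into a single red $H(v,t)$. (This also makes the earlier placement of $w_1,\dots,w_{h-1}$ irrelevant to the final contradiction.) (iv) The bookkeeping does not produce $|W'|\ge tR$ from $N=t\cdot 2^h\cdot R^{h-1}$ by factor-of-$2$ losses alone --- you would get $tR^{h-1}$, and the extra $R^{h-2}$ is neither needed nor explained by the sketch. The cleanest fix is to abandon the incremental embedding entirely and argue via the auxiliary $h$-graph $\Gamma_H$ as above, where the factor $R^{h-1}$ appears naturally as the number of $(h-1)$-tuples and the factor $2^h$ comes from the deletion-method constants.
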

\begin{proof}
Let $N = t \cdot 2^h \cdot r(H,G)^{h-1}$ and suppose $\Gamma$ is an $H(v,t)$-free $k$-graph on $N$ vertices. Let $\Gamma_H$ be the $h$-graph on $V(\Gamma)$ where an $h$-tuple $\{v_1,\ldots, v_h\}$ is an edge of $\Gamma_H$ if the induced subhypergraph $\Gamma[\{v_1,\ldots,v_h\}]$ contains $H$. Because $\Gamma$ is $H(v,t)$-free, it follows that for every copy of $H\setminus \{v\}$ in $\Gamma$, there are fewer than $t$ ways to extend it to a copy of $H$. As the total number of copies of $H\setminus \{v\}$ in $\Gamma$ is at most $N^{h-1}$, we see that $\Gamma_H$ has at most $tN^{h-1}$ edges.

Now, we can find an independent set of $\Gamma_H$ by picking a random vertex subset $S$, each vertex with probability $p$, and then removing at most one vertex from each edge in $S$. The expected number of edges in $S$ is at most $p^h t N^{h-1}$. We pick $p$ to satisfy $p^{h-1} = N^{2-h}/2t$, whence the expected number of vertices deleted is at most
\[
p^h tN^{h-1} = \frac{1}{2} p N,
\]
and therefore $\Gamma_H$ has an independent set $I$ of size at least
\[
\frac{1}{2}pN = \frac{1}{2}\Big(\frac{N}{2t}\Big)^{\frac{1}{h-1}} = r(H, G).
\]
By definition of $\Gamma_H$, $\Gamma[I]$ is an $H$-free $k$-graph of size $r(H,G)$, so $\overline{\Gamma[I]}$ contains a copy of $G$, as desired.
\end{proof}

The previous proposition allows us to replace a vertex $v$ of $H$ by an independent set. The next one generalizes this to replacing $v$ by any $k$-graph.

\begin{prop}\label{prop:replace}
For $k$-graphs $H$, $G$, and $F$ with $H$ having $h$ vertices, vertex $v$ of $H$, and positive integer $t$, we have $r(H(v,F),G) \leq r(F,G)\cdot 2^h \cdot r(H,G)^{h-1}$. 
\end{prop}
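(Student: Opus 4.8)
The plan is to imitate the proof of Proposition~\ref{prop:blowup}, replacing the pigeonhole step that bounds the number of extensions by a Ramsey-type dichotomy. Set $N = r(F,G)\cdot 2^{h}\cdot r(H,G)^{h-1}$ and suppose $\Gamma$ is an $H(v,F)$-free $k$-graph on $N$ vertices; we must find a copy of $G$ in $\overline{\Gamma}$. As before, let $\Gamma_H$ be the $h$-graph on $V(\Gamma)$ in which an $h$-set $T$ is an edge exactly when $\Gamma[T]$ contains a copy of $H$.

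The key step is to analyze, for each copy $\psi$ of $H\setminus\{v\}$ in $\Gamma$, the set $W(\psi)$ of vertices $w\in V(\Gamma)$ not used by $\psi$ for which $\psi$ together with $v\mapsto w$ is a copy of $H$. I claim $\Gamma[W(\psi)]$ is $F$-free. Indeed, a copy of $F$ in $\Gamma[W(\psi)]$ is given by distinct vertices $w_1,\dots,w_{|F|}\in W(\psi)$ (with $w_1$ in the role of the distinguished vertex of $F$), and then mapping $V(H)\setminus\{v\}$ by $\psi$, $v$ to $w_1$, and the $|F|-1$ clones of $v$ to $w_2,\dots,w_{|F|}$ yields a copy of $H(v,F)$ in $\Gamma$: the edges of $H\setminus\{v\}$ are preserved by $\psi$; for every edge $e\ni v$ of $H$ and each $i$, the set $\psi(e\setminus\{v\})\cup\{w_i\}$ is an edge of $\Gamma$ since $w_i\in W(\psi)$; and the edges of $F$ among $\{v\}\cup\{\text{clones}\}$ map into the copy of $F$ on $\{w_1,\dots,w_{|F|}\}$. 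These $(h-1)+|F|=|H(v,F)|$ vertices are distinct because the $w_i$ are distinct and lie outside the image of $\psi$, so this contradicts $H(v,F)$-freeness of $\Gamma$.

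Now I split into two cases. If some $W(\psi)$ has at least $r(F,G)$ vertices, then because $\Gamma[W(\psi)]$ is $F$-free the complement $\overline{\Gamma[W(\psi)]}$, and hence $\overline{\Gamma}$, contains a copy of $G$, and we are done. Otherwise every copy of $H\setminus\{v\}$ extends to fewer than $r(F,G)$ copies of $H$; since there are at most $N^{h-1}$ copies of $H\setminus\{v\}$, the number of copies of $H$ in $\Gamma$ — an upper bound for the number of edges of $\Gamma_H$ — is fewer than $r(F,G)\,N^{h-1}$. From here the argument is exactly that of Proposition~\ref{prop:blowup}: taking a random vertex subset with each vertex present independently with probability $p$ satisfying $p^{h-1}=N^{2-h}/(2r(F,G))$ and deleting one vertex from each edge it contains produces an independent set of $\Gamma_H$ of size at least $\tfrac12\bigl(N/(2r(F,G))\bigr)^{1/(h-1)}=r(H,G)$ by the choice of $N$. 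Such an independent set $I$ satisfies that $\Gamma[I]$ is $H$-free, so $\overline{\Gamma[I]}\subseteq\overline{\Gamma}$ contains a copy of $G$.

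The substance beyond Proposition~\ref{prop:blowup} is the $F$-freeness claim for $W(\psi)$; the main thing to be careful about is the bookkeeping showing that a copy of $F$ inside $W(\psi)$ really does glue with $\psi$ into a copy of $H(v,F)$ — in particular that the $F$-edges among the clones and the $H$-edges through $v$ land on the intended, pairwise distinct vertices — after which the first-moment deletion computation is a verbatim repetition of the earlier proof.
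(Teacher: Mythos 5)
Your proof is correct. It differs from the paper's in organization rather than in substance: the paper proves Proposition~\ref{prop:replace} as a two-line black-box reduction, first applying Proposition~\ref{prop:blowup} with $t=r(F,G)$ to find a red $H(v,t)$ or a blue $G$, and then applying the Ramsey property once more to the $t$ vertices playing the role of $v$ to find a red $F$ (which assembles into a red $H(v,F)$) or a blue $G$. Your argument is the "unpacked" version of this: your set $W(\psi)$ of valid extensions is exactly what makes Proposition~\ref{prop:blowup} tick, and your dichotomy (some $W(\psi)$ has size at least $r(F,G)$ and is $F$-free, versus all $W(\psi)$ small so $\Gamma_H$ has few edges) replays its deletion argument with the pigeonhole step replaced by the $F$-freeness observation. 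Both routes give the identical bound; the paper's is shorter and more modular, while yours has the small advantage of being self-contained and of making explicit the verification that a copy of $F$ inside $W(\psi)$ glues with $\psi$ into a copy of $H(v,F)$ — a point the paper leaves implicit. One cosmetic remark: $F$ itself has no distinguished vertex, so rather than saying $w_1$ plays "the distinguished vertex of $F$," you should say that you label the copy of $F$ so that $w_1$ corresponds to the vertex of $F$ that $v$ occupies inside $H(v,F)$; since every $w_i$ lies in $W(\psi)$, any labeling works, so this is harmless.
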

\begin{proof}
Let $t=r(F,G)$. Consider a red-blue edge-coloring of the complete $k$-graph on $N=t\cdot 2^h \cdot r(H,G)^{h-1}$ vertices. We will show that such a coloring must have a red $H(v,F)$ or a blue $G$. Apply Proposition \ref{prop:blowup}, so the red-blue edge-coloring has a red $H(v,t)$ or a blue $G$. In the latter case, we are done, so we may assume there is a red $H(v,t)$. Among the $t=r(F,G)$ vertices forming copies of $v$ in the red $H(v,t)$, there is a red $F$ or a blue $G$. If there is a red $F$, we get that this red $F$ together with the vertices forming the copy of $H \setminus \{v\}$ in the red $H(v,t)$ form a red $H(v,F)$. Otherwise, there is a blue $G$, in which case we are also done. 
\end{proof} 

We now prove the upper bound in Theorem~\ref{thm:linkcliquelower} using Proposition~\ref{prop:blowup}.

\begin{prop}\label{Ramseylinkobv}
For graphs $G_1,G_2$, we have $r(L_{G_1},L_{G_2}) \leq r(G_1,G_2)+1$. In particular, $$r(L_{K_s},L_{K_n}) \leq r(K_s,K_n)+1 \leq {s+n-2 \choose s-1}+1.$$
\end{prop}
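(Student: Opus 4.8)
The plan is to prove the inequality $r(L_{G_1},L_{G_2}) \le r(G_1,G_2)+1$ directly from the definition of a link hypergraph, and then the ``In particular'' clause follows immediately by specializing to cliques and invoking the classical bound $r(K_s,K_n) \le \binom{s+n-2}{s-1}$. The key observation is that the link hypergraph $L_G$ has a distinguished ``apex'' vertex $u$ whose link (in the $3$-graph sense) is exactly $G$, so a $3$-graph $\Gamma$ contains a copy of $L_{G}$ precisely when some vertex of $\Gamma$ has $G$ in its link.

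First I would set $N = r(G_1,G_2)+1$ and let $\Gamma$ be an arbitrary $3$-graph on $N$ vertices; the goal is to show $\Gamma$ contains a copy of $L_{G_1}$ or $\overline{\Gamma}$ contains a copy of $L_{G_2}$. Pick any vertex $w \in V(\Gamma)$ and consider the link $\Gamma_w$, which is a graph on the remaining $N-1 = r(G_1,G_2)$ vertices. By the definition of the Ramsey number $r(G_1,G_2)$, either $\Gamma_w$ contains a copy of $G_1$ or its complement (within the vertex set $V(\Gamma)\setminus\{w\}$), which is exactly $\overline{\Gamma}_w$, contains a copy of $G_2$. In the first case, the copy of $G_1$ in $\Gamma_w$ together with $w$ forms a copy of $L_{G_1}$ in $\Gamma$ by the definition of $L_{G_1}$; in the second case, the copy of $G_2$ in $\overline{\Gamma}_w$ together with $w$ forms a copy of $L_{G_2}$ in $\overline{\Gamma}$. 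Either way we are done, so $r(L_{G_1},L_{G_2}) \le N = r(G_1,G_2)+1$.

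For the ``in particular'' part, take $G_1 = K_s$ and $G_2 = K_n$; since $L_{K_s}$ is by definition the $3$-graph on $s+1$ vertices whose edges are all triples containing the apex plus an edge of $K_s$ (equivalently $L_{K_s} = K_{s+1}^{(3)}$ with all edges through a fixed vertex, i.e.\ $L_{K_s}$ in the paper's notation), the bound reads $r(L_{K_s},L_{K_n}) \le r(K_s,K_n)+1$. Finally substitute the standard Erd\H{o}s--Szekeres estimate $r(K_s,K_n) \le \binom{s+n-2}{s-1}$ to obtain $r(L_{K_s},L_{K_n}) \le \binom{s+n-2}{s-1}+1$.

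There is essentially no obstacle here: the only thing to be careful about is the bookkeeping of which vertex set the complement is taken over, making sure that ``$\overline{\Gamma}_w$'' (the link in the complement $3$-graph) coincides with ``the complement of the link graph $\Gamma_w$'' on the vertex set $V(\Gamma)\setminus\{w\}$ — this is true because a pair $\{x,y\}$ is a non-edge of $\Gamma_w$ iff $\{w,x,y\}\notin E(\Gamma)$ iff $\{w,x,y\}\in E(\overline{\Gamma})$ iff $\{x,y\}$ is an edge of $\overline{\Gamma}_w$. Once that identification is in hand, the argument is a one-line reduction to the graph Ramsey number.
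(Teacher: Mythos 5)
Your proof is correct and takes essentially the same approach as the paper: fix a vertex, apply the graph Ramsey bound to its link on the remaining $r(G_1,G_2)$ vertices, and lift the resulting monochromatic $G_i$ back to a copy of $L_{G_i}$. The extra remark about identifying the complement of the link with the link in the complement $3$-graph is a correct clarification of a step the paper leaves implicit.
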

\begin{proof}
The link of a vertex $v$ of a $3$-graph on $N=r(G_1,G_2)+1$ vertices is a graph on $r(G_1,G_2)$ vertices and hence contains a copy of $G_1$ or its complement contains a copy of $G_2$, which together with $v$ forms a copy of $L_{G_1}$ in the $3$-graph or $L_{G_2}$ in the complement of the $3$-graph. 
\end{proof}

From Propositions \ref{Ramseylinkobv} and \ref{prop:blowup}, we have the following immediate corollary. 

\begin{cor}\label{cor:uppblowup}
If $L(m,n)$ denotes the blowup $L_{K_n}(v, m)$ where $v$ is the distinguished vertex of $L_{K_n}$, then
\[
r(L_{K_s},L(m,n)) \leq m \cdot 2^{n+1} \cdot \left({s+n-2 \choose s-1}+1\right)^{n}.
\]
\end{cor}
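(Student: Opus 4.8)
The plan is to combine Propositions~\ref{Ramseylinkobv} and~\ref{prop:blowup} directly; this is genuinely an immediate corollary, so the proof is a short substitution. First I would use the symmetry of Ramsey numbers to write $r(L_{K_s}, L(m,n)) = r(L(m,n), L_{K_s})$, and recall that by definition $L(m,n) = L_{K_n}(v,m)$ is the blow-up of the $3$-graph $H := L_{K_n}$ at its distinguished vertex $v$ by a factor of $m$. Since $L_{K_n}$ has $h := n+1$ vertices, Proposition~\ref{prop:blowup} applied with this $H$, this $v$, the $3$-graph $G := L_{K_s}$, and the integer $t := m$ yields
\[
r\big(L(m,n), L_{K_s}\big) \le m \cdot 2^{n+1} \cdot r(L_{K_n}, L_{K_s})^{n}.
\]

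It then remains only to bound the base $r(L_{K_n}, L_{K_s})$. By symmetry and Proposition~\ref{Ramseylinkobv}, $r(L_{K_n}, L_{K_s}) = r(L_{K_s}, L_{K_n}) \le r(K_s, K_n) + 1$, and the classical Erd\H{o}s--Szekeres bound gives $r(K_s, K_n) \le \binom{s+n-2}{s-1}$. Substituting $r(L_{K_n}, L_{K_s}) \le \binom{s+n-2}{s-1} + 1$ into the displayed inequality, and using symmetry once more to return to $r(L_{K_s}, L(m,n))$, gives exactly the claimed bound.

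There is no real obstacle in this argument; the single point requiring attention is lining up the parameters of Proposition~\ref{prop:blowup} correctly. In particular one must remember that $L_{K_n}$ has $n+1$ vertices rather than $n$, so that the exponent appearing on the Ramsey number is $h-1 = n$ and the leading power of two is $2^h = 2^{n+1}$, matching the statement of the corollary.
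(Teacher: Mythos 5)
Your proof is correct and is exactly the route the paper intends: the paper presents this as an immediate corollary of Propositions~\ref{Ramseylinkobv} and~\ref{prop:blowup}, obtained by applying the blow-up bound with $H=L_{K_n}$ on $h=n+1$ vertices and then bounding $r(L_{K_s},L_{K_n})$ via Proposition~\ref{Ramseylinkobv}. Your attention to the parameter $h=n+1$ (giving exponent $n$ and factor $2^{n+1}$) is precisely the one point that needs care, and you handled it correctly.
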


Observe that any $3$-graph on $n$ vertices which is a subgraph of a blow-up of a link hypergraph is a subgraph of $L(n,n)$. Thus, the upper bound in Theorem~\ref{thm:linkcliquelower} (and the comment immediately afterwards) follows from Corollary~\ref{cor:uppblowup}.

For the rest of this section, we study a quite general problem on hypergraph Ramsey numbers: characterize $k$-graphs $H$ for which the off-diagonal Ramsey number $r(H,K_n^{(k)})$ has a certain growth rate. For example, for which $H$ do these Ramsey numbers grow polynomially in $n$? Of course, the function $r(H,K_n)$ grows polynomially in $n$ for every graph $H$. However, already for $3$-graphs there is no known characterization. 

Let $\Fpoly$ be the family of $3$-graphs $H$ for which there exists a constant $c(H)$ such that $r(H,K_n^{(3)}) \leq n^{c(H)}$ for all $n$. Proposition~\ref{prop:replace} allows us to build members of $\Fpoly$ recursively.

\begin{prop}
The family $\Fpoly$ contains the single edge $3$-graph $K_{3}^{(3)}$, is closed under taking subgraphs, and if $H,F \in \Fpoly$ then $H(v,F) \in \Fpoly$ for every $v\in V(H)$.
\end{prop}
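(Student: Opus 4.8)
The plan is to verify the three claimed closure properties in turn, with the last one being the substantive step. For the first property, I would observe that $K_3^{(3)}$ is a single edge, so $r(K_3^{(3)}, K_n^{(3)}) = n$ for all $n \ge 3$ (any $2$-coloring of the triples of $[n]$ either uses a red edge, giving a red $K_3^{(3)}$, or is entirely blue, giving a blue $K_n^{(3)}$). Hence $c(K_3^{(3)}) = 1$ works and $K_3^{(3)} \in \Fpoly$.

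For closure under subgraphs, suppose $H' \subseteq H$ with $H \in \Fpoly$. Any $H$-free $3$-graph is also $H'$-free, so $r(H', K_n^{(3)}) \le r(H, K_n^{(3)}) \le n^{c(H)}$, and we may take $c(H') = c(H)$. This gives $H' \in \Fpoly$.

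The main content is the recursive step: if $H, F \in \Fpoly$, then $H(v,F) \in \Fpoly$ for every $v \in V(H)$. Here I would apply Proposition~\ref{prop:replace} with $G = K_n^{(3)}$, which gives
\[
r(H(v,F), K_n^{(3)}) \le r(F, K_n^{(3)}) \cdot 2^h \cdot r(H, K_n^{(3)})^{h-1},
\]
where $h = |V(H)|$. Since $H, F \in \Fpoly$, there are constants $c(H), c(F)$ with $r(H, K_n^{(3)}) \le n^{c(H)}$ and $r(F, K_n^{(3)}) \le n^{c(F)}$ for all $n$. Substituting, for $n \ge 2$ we get
\[
r(H(v,F), K_n^{(3)}) \le n^{c(F)} \cdot 2^h \cdot n^{(h-1)c(H)} \le n^{c(F) + h + (h-1)c(H)},
\]
using $2^h \le n^h$ once $n$ is large enough, and absorbing the finitely many small values of $n$ into a slightly larger exponent if necessary. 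Thus $c(H(v,F)) = c(F) + h + (h-1)c(H)$ suffices, and $H(v,F) \in \Fpoly$.

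The only mild subtlety — and the step I'd expect to need the most care — is confirming that Proposition~\ref{prop:replace} applies with $G = K_n^{(3)}$ uniformly in $n$ (it does, since $h$ and the constants depend only on $H$ and $F$, not on $n$) and handling the $2^h$ factor together with the requirement that the bound hold \emph{for all} $n$, not just large $n$. Both are routine: the base cases of small $n$ contribute only a multiplicative constant, which can be folded into the exponent since $r(H(v,F),K_n^{(3)}) \le N^{(3)}$ is trivially at most some constant for each fixed small $n$.
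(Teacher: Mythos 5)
Your proposal correctly reconstructs the argument; the paper itself does not prove this proposition but defers to the references \cite{CoFoSu,ErHa}, so your write-up fills in the standard argument and is essentially the intended one (base case, monotonicity in $H$, and an application of Proposition~\ref{prop:replace} with $G = K_n^{(3)}$).

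One direction error to fix in the subgraph step: you write ``Any $H$-free $3$-graph is also $H'$-free,'' but this is backwards when $H' \subseteq H$ --- a $3$-graph can avoid $H$ while containing $H'$ (e.g.\ a single edge is $K_4^{(3)}$-free but not $K_3^{(3)}$-free). The correct implication is that any $H'$-free $3$-graph is $H$-free, since containing $H$ forces containing its subgraph $H'$. This correctly yields the monotonicity $r(H', K_n^{(3)}) \le r(H, K_n^{(3)})$ that you then use, so the conclusion is fine; only the stated implication needs to be reversed. The rest, including the recursive step via Proposition~\ref{prop:replace} and the absorption of the $2^h$ factor into the exponent for $n \ge 2$, is correct.
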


These facts are already observed in \cite{CoFoSu,ErHa}. 

In the other direction, it follows from an argument of Erd\H{o}s and Hajnal \cite{ErHa} (see also \cite{CoFoSu}) that if $H$ is a $3$-graph on $s$ vertices and there is no edge-coloring $C$ of the complete graph on $V(H)$ with colors $I,II,III$ and vertex ordering of $H$ so that for each edge $\{u,v,w\}$ of $H$ with $u<v<w$, $(u,v)$ is color $I$, $(v,w)$ is color $II$, and $(u,w)$ is color $III$, then $r(H,K_n^{(3)}) \geq 2^{cn}$ for some absolute constant $c$.  We have shown $H$ is not in $\Fpoly$. For all other $3$-graphs besides those discussed above is it not known whether or not they lie in $\Fpoly$. 

We remark that the $3$-graphs $H$ obtained in the preceding paragraph from ``ordered rainbow triangles'' naturally arise not only in hypergraph Ramsey problems, but also in hypergraph Tur\' an problems, see the recent work of Reiher, R\"odl, and Schacht~\cite{ReRoSc}.

Let $\Fpolyfact$ be the family of $3$-graphs $H$ for which $r(H,K_n^{(3)}) = n^{O(n)}$ for all $n$ (the implied constant may depend on $H$). In other words, $\Fpolyfact$ consists of those $3$-graphs for which the off-diagonal Ramsey number grows at most polynomially in $n!$. The family $\Fpolyfact$ is closed under the same blowup operation as $\Fpoly$, and by Theorem~\ref{thm:linkcliqueversusclique}, link hypergraphs lie in it as well.

\begin{prop}\label{prop:polyfact}
The family $\Fpolyfact$ contains all link $3$-graphs $L_G$, is closed under taking subgraphs, and if $H,F \in \Fpolyfact$ then $H(v,F) \in \Fpolyfact$ for every $v\in V(H)$.
\end{prop}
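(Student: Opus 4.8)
The plan is to verify each of the three claimed properties of $\Fpolyfact$ in turn, mirroring the structure of the analogous statement for $\Fpoly$ but substituting the appropriate quantitative bounds.

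\textbf{Link hypergraphs lie in $\Fpolyfact$.} This is immediate from Theorem~\ref{thm:linkcliqueversusclique}. If $G$ is a fixed graph on $s$ vertices, then $L_G$ is a subgraph of $L_{K_s}$, and Theorem~\ref{thm:linkcliqueversusclique} gives $r(L_{K_s},K_n^{(3)}) < (2n)^{sn} = n^{O(n)}$, where the implied constant depends on $s$ and hence on $G$. Since a $3$-graph Ramsey number is monotone under passing to subgraphs in the first coordinate, $r(L_G,K_n^{(3)}) \le r(L_{K_s},K_n^{(3)}) = n^{O(n)}$.

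\textbf{Closure under subgraphs.} This is the same trivial monotonicity observation: if $H' \subseteq H$ then any $H$-free coloring is also $H'$-free, so $r(H',K_n^{(3)}) \le r(H,K_n^{(3)})$; thus if the latter is $n^{O(n)}$ so is the former.

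\textbf{Closure under the blowup operation $H(v,F)$.} This is the only step requiring real (though routine) work, and it is where I would spend the effort. Suppose $H$ has $h$ vertices and $H, F \in \Fpolyfact$, so $r(H,K_n^{(3)}) \le n^{c_1 n}$ and $r(F,K_n^{(3)}) \le n^{c_2 n}$ for suitable constants depending on $H$ and $F$. Apply Proposition~\ref{prop:replace} with $G = K_n^{(3)}$:
\[
r(H(v,F),K_n^{(3)}) \le r(F,K_n^{(3)}) \cdot 2^h \cdot r(H,K_n^{(3)})^{h-1} \le n^{c_2 n}\cdot 2^h \cdot \left(n^{c_1 n}\right)^{h-1} = 2^h \cdot n^{(c_2 + c_1(h-1))n}.
\]
Since $h$ is a constant depending only on $H$, the right-hand side is $n^{O(n)}$, with the implied constant depending on $H$ and $F$ (through $c_1$, $c_2$, and $h$). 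Hence $H(v,F) \in \Fpolyfact$.

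I do not anticipate any genuine obstacle here: the proposition is an essentially formal consequence of Proposition~\ref{prop:replace} and Theorem~\ref{thm:linkcliqueversusclique}, and the only thing to be careful about is tracking that the constants stay finite and depend only on the fixed graphs $H$ and $F$, not on $n$. The $(h-1)$-th power in Proposition~\ref{prop:replace} is harmless because it only multiplies the exponent's linear-in-$n$ coefficient by the constant $h-1$, keeping the growth rate within the $n^{O(n)}$ regime that defines $\Fpolyfact$.
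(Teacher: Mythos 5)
Your proof is correct and follows exactly the reasoning the paper intends: the paper leaves Proposition~\ref{prop:polyfact} without an explicit proof, noting only that $\Fpolyfact$ is closed under the same blowup operation as $\Fpoly$ (via Proposition~\ref{prop:replace}) and that link hypergraphs lie in it by Theorem~\ref{thm:linkcliqueversusclique}, which is precisely what you wrote out. One small slip worth fixing: in the subgraph-closure step you wrote that if $H' \subseteq H$ then ``any $H$-free coloring is also $H'$-free,'' which has the implication backwards (it is $H'$-free that implies $H$-free). The correct justification for $r(H',K_n^{(3)}) \le r(H,K_n^{(3)})$ is that any $3$-graph on $r(H,K_n^{(3)})$ vertices contains a copy of $H$, hence of $H'$, or its complement contains $K_n^{(3)}$; your stated conclusion is right, just the one-line reasoning needs flipping.
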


We will use Proposition~\ref{prop:polyfact} to prove Theorem~\ref{thm:constant-fraction}.
\begin{proof}[Proof of Theorem~\ref{thm:constant-fraction}.]
Let $E(s)$ be the maximum number of edges of a $3$-graph on $s$ vertices in $\Fpolyfact$. So there is a $3$-graph $F$ in $\Fpolyfact$ on $s$ vertices with $E(s)$ edges. If $t \le E(s)$ and $H$ is a $3$-graph such that among any $s$ vertices of $H$, there are fewer than $t$ edges, then $H$ is $F$-free and hence the independence number of $H$ is $\Omega(\log N/\log \log N)$. Thus, $f_k(N;s,t) \ge \Omega(\log N/ \log \log N)$ for all $t\le E(s)$.

Note that $L_{K_{s_2}} \in \Fpolyfact$ for any positive integer $s_2$ by Theorem \ref{thm:linkcliqueversusclique}. By Proposition~\ref{prop:polyfact}, whenever $s_1+s_2=s$ and $F\in \Fpolyfact$ has $s_1$ vertices, the $3$-graph $L_{K_{s_2}}(v,F)$ is also in $\Fpolyfact$ for any $v\in V(L_{K_{s_2}})$. Taking $v$ to be the distinguished vertex in $L_{K_{s_2}}$ and $F$ to have $E(s_1)$ edges, this gives
\begin{equation}\label{eq:recursivelower} E(s_1+s_2) \geq E(s_1)+s_1{s_2 \choose 2}.
\end{equation}

Using (\ref{eq:recursivelower}) with $s_1 = \alpha s$, $s_2 = (1-\alpha)s$ with $\alpha = \frac{\sqrt{3}}{2}-\frac{1}{2}$ and iterating the recurrence, we see that $\lim_{s \to \infty} E(s)/s^3 \geq \beta$,\footnote{That this limit exists follows easily from Proposition \ref{prop:polyfact}.} where $\beta$ satisfies
\[
\beta = \beta \alpha^3 + \alpha (1-\alpha)^2/2.
\]
It follows that $E(s)\ge (\beta - o(1))s^3$. Since $6\beta > 0.464$, this proves that for all $t\le (0.464 -o(1)){s\choose 3}$, we have $f_3(N;s,t)  = \Omega(\log N/ \log \log N)$.

Let $e(s)$ be the maximum number of edges in a $3$-graph on $s$ vertices with the property that the link of each vertex is bipartite. Erd\H{o}s and S\'os conjectured (see \cite{FF84,MPS11}, page 238),  that $e(s)=(\frac{1}{4}+o(1)){s \choose 3}$, and the current best known upper bound is $e(s)\leq (0.266+o(1)){s \choose 3}$ due to Razborov \cite{Razborov10} using the flag algebra method. 

By Theorem~\ref{thm:oddgirthlowerbound}, for any fixed $s$, there exists a $3$-graph $\Gamma$ on $N$ vertices with $\alpha(\Gamma) = O(\log N/ \log \log N)$ so that the link of each vertex has odd girth greater than $s$. In particular, if $U$ is any set of $s$ vertices of $\Gamma$, the induced subhypergraph $\Gamma[U]$ has bipartite links. By the definition of $e(s)$, this means that among any $s$ vertices of $\Gamma$, there are at most $e(s)$ edges. This proves that $f_3(N;s,t) = O(\log N/\log \log N)$ whenever $t>e(s)$, as desired.
\end{proof}

\section{Closing Remarks} \label{sec:closing}

There are many interesting open problems on hypergraph Ramsey numbers of $3$-uniform hypergraphs. We discuss here a few particularly relevant questions.

Our first problem is about the dependence of the implicit constants in Theorem~\ref{thm:linkhypergraph} on the graph $G$. As $G$ varies through non-bipartite graphs, the explicit upper and lower bounds in  Theorem~\ref{thm:linkhypergraph} we obtain from Theorems~\ref{thm:oddgirthlowerbound} and~\ref{thm:linkcliqueversusclique} are of the form
\[
n^{\Omega(n/g)} \le r(L_G, K_n^{(3)}) \le n^{O(sn)},
\]
where $g$ is the odd girth of $G$ and $s$ is the number of vertices of $G$. We make the following conjecture.
 
\begin{conj}
For every non-bipartite graph $G$, there exists a constant $c_G$ for which
\[
r(L_G, K_n^{(3)}) = n^{(1+o(1))c_Gn}.
\]
\end{conj}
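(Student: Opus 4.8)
\emph{Proof proposal (this is a plan for attacking the conjecture, which is open).} The conjecture is equivalent to the statement that the limit $c_G \coloneqq \lim_{n\to\infty}\frac{\log r(L_G,K_n^{(3)})}{n\log n}$ exists; by Theorems~\ref{thm:oddgirthlowerbound} and~\ref{thm:linkcliqueversusclique} the liminf and limsup are already known to lie between $\Omega(1/g)$ and $O(s)$, and for $G=K_s$ they are both $\Theta(s)$ (by Theorem~\ref{thm:linkcliquelower}, together with the trivial comparison between $K_n^{(3)}$ and $K_{n,n,n}^{(3)}$), so the content is to pin them to a common value. I would treat the two sides separately. On the upper side the natural target is a near-submultiplicative recursion
\[
r(L_G,K_{a+b}^{(3)}) \le r(L_G,K_a^{(3)})\cdot r(L_G,K_b^{(3)})\cdot \binom{a+b}{a}^{C+o(1)}
\]
for the correct constant $C$: writing $g(n)=\log r(L_G,K_n^{(3)})-C\log n!$, this says exactly that $g$ is approximately subadditive, so $g(n)/n$ converges, and since $\log n!/(n\log n)\to 1$ we obtain $\limsup_n \tfrac{\log r(L_G,K_n^{(3)})}{n\log n}\le C$, with equality precisely when $\inf_n g(n)/n>-\infty$, i.e. when there is a matching construction $r(L_G,K_n^{(3)})\ge n^{(C-o(1))n}$. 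So everything reduces to (i) producing such a recursion with an explicitly identified $C$, and (ii) a construction attaining the exponent $C$.

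For (i), I would try to extract the recursion from a refined analysis of a $G$-adapted version of the vertex online Ramsey game of Section~\ref{sec:link-upper-bound} (with the forward-star target $K_{1,s-1}^*$ replaced by an ordered graph encoding that a link contains a copy of $G$): Builder's near-optimal strategy for producing a blue $K_{a+b-1}$ ought to decompose into a phase building a blue $K_{a-1}$ and, on the surviving candidate set, a recursive instance for $K_{b-1}$, so that the parameters $(v,r,m)$ of Lemma~\ref{lem:vertex-online} are roughly additive across the split and feeding the result into that lemma yields the displayed product inequality with $C$ the limiting ``red-edge density'' of the optimal game. For (ii), I would iterate the random construction $\Gamma(\chi)$ of Section~\ref{sec:setup}, taking the auxiliary graph $A$ (as in Lemma~\ref{lem:lower-chromatic}) essentially at the $\hom(G)$-free threshold $\phom(G,m)$, and self-improve the exponent by composing with the blow-up and replacement operations of Propositions~\ref{prop:blowup} and~\ref{prop:replace}. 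The optimistic hope is that both optimizations converge to the same $C$, expressible through $\phom(G,\cdot)$ (and, for non-bipartite $G$ other than cliques, through the homomorphism threshold of $G$).

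The main obstacle is that the upper and lower exponents are not presently within a constant factor for general $G$: already for $G=K_3$ we have only $r(K_4^{(3)}\setminus e,K_n^{(3)})=n^{\Theta(n)}$ (Theorem~\ref{thm:K43minusedge}), the construction contributing exponent $\Omega(n)$ with a tiny absolute constant while Theorem~\ref{thm:linkcliqueversusclique} contributes $O(n)$ with absolute constant $3$, so the full conjecture subsumes a tight evaluation of the constant in $r(K_4^{(3)}\setminus e,K_n^{(3)})$, which is itself open. I expect step (i) to be the genuine difficulty: a recursion with an \emph{identified} $C$ is essentially a near-optimality statement for the online game --- equivalently, a structural description of the extremal $L_G$-free $3$-graphs --- that the current probabilistic arguments do not supply. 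A reasonable first milestone is the case $G=K_s$, where $\phom(K_s,m)$ is exactly the Erd\H{o}s--R\'enyi clique threshold, Theorem~\ref{thm:linkcliquelower} already gives $c_{K_s}=\Theta(s)$, and both the construction of Section~\ref{sec:large-d} and the game analysis take their simplest form.
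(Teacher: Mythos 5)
The statement you were given is not a theorem of the paper but an open conjecture, posed by the authors in Section~\ref{sec:closing} (Closing Remarks). The paper contains no proof of it, and the authors themselves emphasize that even the qualitative question of whether $c_G$ goes to zero as $G$ ranges over odd cycles is unresolved. You correctly recognize this and submit a research plan rather than a proof, so there is nothing to compare against a paper argument.

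As a plan, what you write is reasonable and internally consistent. Your reformulation of the conjecture as the existence of $\lim_n \log r(L_G,K_n^{(3)})/(n\log n)$ is exactly right, and the Fekete-style reduction via a near-submultiplicative recursion is the natural skeleton: writing $g(n)=\log r(L_G,K_n^{(3)})-C\log n!$ your proposed inequality does give $g(a+b)\le g(a)+g(b)+o\!\left(\log\binom{a+b}{a}\right)$, which is a subadditivity-with-error of the kind that still forces $g(n)/n$ to converge (possibly to $-\infty$), and you correctly flag that the matching lower bound is needed to rule out the $-\infty$ case. You also correctly read off from the paper that the liminf and limsup currently sit between $\Omega(1/g)$ and $O(s)$, that for $G=K_s$ both are $\Theta(s)$ by Theorem~\ref{thm:linkcliquelower}, and that already for $G=K_3$ the constants in Theorem~\ref{thm:K43minusedge} are nowhere near each other (a tiny constant of order $c^2/(144g)$ with $c=10^{-4}$ on the lower side versus $3$ on the upper side). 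Your diagnosis of the bottleneck --- that one needs a recursion with an \emph{identified} constant $C$, which is tantamount to a near-optimality statement for the online game or a structural description of extremal $L_G$-free $3$-graphs, and that the present probabilistic machinery gives neither --- matches the state of the art. The one caveat worth adding is that even step (ii) of your plan is not free: the construction $\Gamma(\chi)$ of Section~\ref{sec:setup} loses constant factors in the exponent at several places (choice of $m$, the entropy counting in Lemma~\ref{lem:main}, the union bound), and there is no current reason to believe the construction is exponent-optimal, so the ``optimistic hope'' that both optimizations converge to the same $C$ is genuinely optimistic. In short: no error, but no proof either, as is to be expected for an open conjecture.
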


\noindent It is not hard to show that if the constants $c_G$ exist for odd cycles $G=C_g$, then they must be bounded above by an absolute constant. However, we do not know if $c_G$ (if it exists) goes to zero as $G$ ranges through all odd cycles $C_{g}$.

In Section~\ref{sec:recursive}, we defined the families $\Fpoly$ and $\Fpolyfact$ of $3$-graphs $H$ for which $r(H,K_n^{(3)}) \le n^{O(1)}$ and $r(H,K_n^{(3)}) \le n^{O(n)}$, respectively. We also showed that these families are closed under blow-ups, giving a recursive procedure for constructing large subfamilies of both. We conjecture that there are $3$-graphs not in $\Fpolyfact$. 

It is natural to also study the families $\Fpartpoly$ and $\Fpartpolyfact$  of $3$-graphs $H$ for which $r(H,K_{n,n,n}^{(3)})  \leq n^{O(1)}$ and $r(H,K_{n,n,n}^{(3)})  \leq n^{O(n)}$, respectively. Since $K_{n,n,n}^{(3)}\subset K_{3n}^{(3)}$, we have $\Fpoly \subseteq \Fpartpoly$ and $\Fpolyfact \subseteq \Fpartpolyfact$. Furthermore, it is easy to check that $\Fpartpoly$ and $\Fpartpolyfact$ are closed under blowups, i.e.\ if $H, F\in \Fpartpoly$ and $v\in V(H)$, then $H(v,F)\in\Fpartpoly$ as well, and similarly for $\Fpartpolyfact$.

We now show that these two new families are closed under an additional operation.

\begin{prop}\label{prop:indep-edge}
Let $H$ be a fixed $3$-graph on $h\ge 3$ vertices and $\{u,v,w\}$ be a triple of vertices of $H$ no pair of which lie in an edge together, and let $H'$ be the $3$-graph obtained by adding the edge $\{u,v,w\}$ to $H$. Then, $r(H',K_{n,n,n}^{(3)}) \le 3n\cdot r(H, K_{n,n,n}^{(3)})^h$.
\end{prop}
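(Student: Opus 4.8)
The plan is to mimic the proof of Proposition~\ref{prop:blowup}: pass to an auxiliary hypergraph that records potential copies of $H$, find a large independent set in it, and use the absence of $H'$ in the original coloring to conclude. Let $N = 3n\cdot r(H,K_{n,n,n}^{(3)})^h$, fix a red-blue coloring of the complete $3$-graph on $N$ vertices with no red $H'$ and no blue $K_{n,n,n}^{(3)}$, and call the red hypergraph $\Gamma$. Since $\Gamma$ is $H'$-free, for every copy of $H$ in $\Gamma$ (sitting on vertices $v_1,\dots,v_h$, with $u,v,w$ mapped to some triple among them), the triple of vertices playing the role of $u,v,w$ is not an edge of $\Gamma$; equivalently, that triple is blue. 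So if we define $\Gamma_H$ to be the $h$-graph on $V(\Gamma)$ whose edges are the $h$-sets inducing a red copy of $H$, then for every edge $\{v_1,\dots,v_h\}$ of $\Gamma_H$ there is a designated blue triple inside it.

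First I would use a simple counting/deletion argument to find a large vertex set $I$ hitting no edge of $\Gamma_H$: there are at most $N^h$ $h$-sets total, so $\Gamma_H$ has at most $N^h$ edges; choosing a random subset with each vertex included with probability $p$ and deleting one vertex per surviving edge leaves an expected $pN - p^h N^h$ vertices, so setting $p^{h-1}=N^{1-h}/2$ gives an independent set $I$ of size at least $\tfrac12 pN = \tfrac12 (N/2)^{1/(h-1)}$. Actually, for the bound $N=3n\cdot r(H,K_{n,n,n}^{(3)})^h$ to work out we should arrange $|I|\ge 3n\cdot r(H,K_{n,n,n}^{(3)})$, which follows from the stated $N$ with a constant to spare (one can also just take $\Gamma_H$ to have at most $\binom{N}{h}$ edges and optimize $p$ as in Proposition~\ref{prop:blowup}; the exact constant is routine). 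Within $\Gamma[I]$, there are no red copies of $H$ \emph{at all} except possibly on edges of $\Gamma_H$ — but $I$ is independent in $\Gamma_H$, so $\Gamma[I]$ is genuinely $H$-free as a red hypergraph. Since $|I|\ge r(H,K_{n,n,n}^{(3)})$ — in fact much larger — the complement (blue) hypergraph on $I$ contains a copy of $K_{n,n,n}^{(3)}$, contradicting our assumption. This proves $r(H',K_{n,n,n}^{(3)}) \le N$.

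The one genuine subtlety, and the step I expect to need the most care, is the bookkeeping that $\Gamma[I]$ is truly $H$-free: we need that \emph{every} red copy of $H$ in the whole coloring uses an $h$-set that is an edge of $\Gamma_H$, and that the $H'$-freeness hypothesis indeed forces the designated triple to be blue \emph{for every embedding of $H$}, including different embeddings that might place $u,v,w$ on different triples of the same $h$-set. This is fine because for \emph{any} embedding $\phi$ of $H$ into $\Gamma$, adding the red edge $\{\phi(u),\phi(v),\phi(w)\}$ would create a red $H'$; so that triple cannot be red, i.e.\ is blue, and hence $\{\phi(u),\phi(v),\phi(w)\}\notin E(\Gamma)$. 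Thus a red copy of $H$ in $\Gamma$ can never have its $u,v,w$-triple red — but that is automatic since $u,v,w$ span no edge of $H$, so this just says nothing new; the real point is the reverse: a red copy of $H$ \emph{together with} a red $\{u,v,w\}$-triple would be a red $H'$, so no red copy of $H$ can be completed to a red $H'$, which is exactly $H'$-freeness restated. Consequently $\Gamma[I]$ being $H$-free is immediate once $I$ avoids $E(\Gamma_H)$, and the argument closes. The factor $3n$ (rather than $n$) in the bound simply accounts for $K_{n,n,n}^{(3)}$ having $3n$ vertices, paralleling the role of $t$ in Proposition~\ref{prop:blowup}.
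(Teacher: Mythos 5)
There is a genuine gap in your argument, and it is the step you describe as ``routine'': extracting a large independent set of $\Gamma_H$. In Proposition~\ref{prop:blowup} this works because $H(v,t)$-freeness bounds the number of edges of $\Gamma_H$ at $tN^{h-1}$ (each copy of $H\setminus\{v\}$ has fewer than $t$ red extensions to a copy of $H$), and with $e(\Gamma_H)\le tN^{h-1}$ the deletion argument produces an independent set of size $\Theta\bigl((N/t)^{1/(h-1)}\bigr)$. Here you have no such bound: $H$ is a subgraph of $H'$, so $H'$-freeness places no restriction whatsoever on the number of red copies of $H$, and $\Gamma_H$ can have $\Theta(N^h)$ edges. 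In that regime the deletion argument collapses — with $p^{h-1}=N^{1-h}/2$ one has $pN=2^{-1/(h-1)}$, a constant less than $1$, not $\tfrac12(N/2)^{1/(h-1)}$ as you wrote — so no nontrivial independent set is obtained. In short, your plan to make $\Gamma[I]$ genuinely $H$-free cannot get off the ground, because there is no reason for the $h$-graph of $H$-copies to be sparse.

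This is precisely why the paper's proof does not try to pass to an $H$-free induced subhypergraph. Instead it fixes a partial embedding $\phi$ of $H\setminus\{u,v,w\}$ and looks at the three extension sets $U(\phi),V(\phi),W(\phi)$; since $u,v,w$ span no edge of $H$, any choice $u_0\in U(\phi)$, $v_0\in V(\phi)$, $w_0\in W(\phi)$ completes $\phi$ to a copy of $H$, and $H'$-freeness then forces every such triple to be a non-edge of $\Gamma$. So either some $\phi$ has all three sets of size at least $3n$, giving a blue $K_{n,n,n}^{(3)}$ directly, or a double count (using that every $r(H,K_{n,n,n}^{(3)})$-subset of $\Gamma$ contains a copy of $H$, since $\overline\Gamma$ is $K_{n,n,n}^{(3)}$-free) yields a contradiction on the number of copies of $H$. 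The ``tripartite'' structure of the target $K_{n,n,n}^{(3)}$ is what makes this work and is not incidental; your argument does not use it at all, which is another sign that something is missing. Separately, your remark that a red $H$ ``can never have its $u,v,w$-triple red — but that is automatic'' is not correct: the triple $\{\phi(u),\phi(v),\phi(w)\}$ is a perfectly good candidate red edge of $\Gamma$, and it is exactly the content of $H'$-freeness (not of $H$ having no edge there) that rules it out.
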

\begin{proof}
Let $N = r(H, K_{n,n,n}^{(3)})$, and suppose for contradiction that there exists an $H'$-free $3$-graph $\Gamma$ on $M = 3n \cdot N^h$ vertices whose complement contains no $K_{n,n,n}^{(3)}$. Let $H\setminus\{u,v,w\}$ be the $3$-graph obtained by removing the vertices $u,v,w$ from $H$. For each embedding $\phi:H\setminus\{u,v,w\}\hookrightarrow\Gamma$, we let $U(\phi)$ be the set of vertices $u_0\in V(\Gamma)$ such that if we extend $\phi$ by setting $\phi(u)=u_0$, $\phi$ is an embedding of $H\setminus\{v,w\}$. Define $V(\phi)$, $W(\phi)$ similarly.

Since there are no edges of $H$ containing any pair of $\{u,v,w\}$, it follows that if $u_0 \in U(\phi)$, $v_0 \in V(\phi)$ and $w_0 \in W(\phi)$ are distinct, then these three vertices form a copy of $H$ together with $\textup{im}(\phi)$. Since $\Gamma$ is $H'$-free, no such triple $(u_0,v_0,w_0)\in U(\phi)\times V(\phi) \times W(\phi)$ of distinct vertices can be an edge of $\Gamma$. It remains to show that for some choice of $\phi$, $|U(\phi)|$, $|V(\phi)|$, and $|W(\phi)|$ are all at least $3n$, since this would then imply the existence of a copy of $K_{n,n,n}^{(3)}$ in the complement of $\Gamma$.

This is a standard double-counting argument. Suppose for the sake of contradiction that for any choice of $\phi:H\setminus\{u,v,w\}\hookrightarrow\Gamma$, at least one of $|U(\phi)|$, $|V(\phi)|$, and $|W(\phi)|$ is smaller than $3n$. For each such $\phi$,
\[
|U(\phi)|\cdot |V(\phi)|\cdot |W(\phi)| < 3n \cdot M^2.
\]
Because every copy of $H$ in $\Gamma$ can be obtained from some $\phi$, and there are at most $M^{h-3}$ choices of $\phi$, the total number of copies of $H$ in $\Gamma$ is less than $3n M^{h-1}$.

On the other hand, we know that among any $N= r(H, K_{n,n,n}^{(3)})$ vertices of $\Gamma$, there is a copy of $H$, since $\overline{\Gamma}$ is $K_{n,n,n}^{(3)}$-free. Each single copy of $H$ lies in at most ${M-h \choose N-h}$ of the $N$-subsets of $\Gamma$, so the total number of copies of $H$ in $\Gamma$ is at least
\[
\frac{{M\choose N}}{{M-h\choose N-h}} = \frac{(M)_h}{(N)_h} \ge \Big(\frac{M}{N}\Big)^h \ge 3n M^{h-1},
\]
where $(x)_h = x(x-1)\cdots(x-h+1)$ is the falling factorial. This contradicts our previous conclusion that there are fewer than $3n M^{h-1}$ copies of $H$, so we are done.
\end{proof}

Note that the previous proposition can be iterated to show that every linear $3$-graph lies in $\Fpartpoly$. We suspect that $\Fpartpoly$ is significantly larger than $\Fpoly$, and similarly $\Fpartpolyfact$ is larger than $\Fpolyfact$. We make the following quantitative conjecture.

\begin{conj}
If
\[
e(\F) = \limsup_{H\in \F} \frac{e(H)}{{v(H)\choose 3}},
\]
then $e(\Fpoly) < e(\Fpartpoly)$ and $e(\Fpolyfact)<e(\Fpartpolyfact)$.
\end{conj}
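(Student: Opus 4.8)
The plan is to prove each of the two strict inequalities by sandwiching a structural upper bound on the left‑hand density against a construction‑based lower bound on the right‑hand density, the slack coming from the fact that $\Fpartpoly$ and $\Fpartpolyfact$ are closed under adding an independent edge (Proposition~\ref{prop:indep-edge}), an operation not known to be available for $\Fpoly$ or $\Fpolyfact$.

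For $e(\Fpoly) < e(\Fpartpoly)$, the first step is the observation that every member of $\Fpoly$ \emph{and} of $\Fpartpoly$ has all of its links bipartite: if a vertex $v$ of $H$ had a link $G_v$ of odd girth $g$, then $H \supseteq L_{G_v} \supseteq L_{C_g}$, so $r(H,K_n^{(3)}) \ge r(L_{C_g},K_n^{(3)}) = n^{\Theta(n)}$ by Theorem~\ref{thm:linkhypergraph} and $r(H,K_{n,n,n}^{(3)}) \ge r(L_{C_g},K_{n,n,n}^{(3)}) \ge n^{\Omega(n/g)}$ by Theorem~\ref{thm:oddgirthlowerbound}, either of which is superpolynomial for fixed $G_v$. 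Hence $e(\Fpoly),\, e(\Fpartpoly) \le \rho$, where $\rho$ is the bipartite‑link Tur\'an density ($\rho \le 0.266$ by Razborov~\cite{Razborov10}, conjecturally $\rho = \tfrac14$). The plan is then to separate the two densities across $\rho$: first, to prove $e(\Fpartpoly) = \rho$ by showing that \emph{every} $3$-graph all of whose links are bipartite satisfies $r(\,\cdot\,,K_{n,n,n}^{(3)}) = n^{O(1)}$ — a polynomial bound I would derive by adapting the arguments of~\cite{CoFoSu} that give $r(L_G,K_{n,n,n}^{(3)}) = n^{O(1)}$ for bipartite $G$, the plausible outcome being that $\Fpartpoly$ is exactly the class of $3$-graphs with all links bipartite — and second, to prove $e(\Fpoly) < \rho$ strictly via a stability argument: a $3$-graph whose edge density is within $\varepsilon$ of extremal for the bipartite‑link problem should contain a sub‑configuration already forcing polylogarithmic independence number, hence $r(\,\cdot\,,K_n^{(3)})$ superpolynomial, excluding it from $\Fpoly$. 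This last step is the main obstacle: it demands a construction of a $3$-graph with near‑minimal independence number that avoids every near‑extremal bipartite‑link $3$-graph, and the constructions of Sections~\ref{sec:large-d}--\ref{sec:main}, which crucially use \emph{non}-bipartite links, do not obviously supply it.

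For $e(\Fpolyfact) < e(\Fpartpolyfact)$ the difficulty is of a different order, since neither family is confined to bipartite links (the link hypergraphs $L_{K_s}$ lie in $\Fpolyfact$ by Theorem~\ref{thm:linkcliqueversusclique}, and in $\Fpartpolyfact$ as well), and the recursion~(\ref{eq:recursivelower}) only gives the lower bound $e(\Fpolyfact) \ge 6\beta > 0.464$ with no nontrivial upper bound at all. The plan is (i) to prove $e(\Fpolyfact) \le 1 - \delta$ for some explicit $\delta > 0$, and then (ii) to construct members of $\Fpartpolyfact$ of density exceeding $1-\delta$ by combining blow‑ups, the $H(v,F)$ operation, and Proposition~\ref{prop:indep-edge}, starting from link hypergraphs (whose apex has quadratic degree, which is what powers~(\ref{eq:recursivelower})). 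Step (i) is the principal obstacle, and it is no easier than the conjecture stated just before this one: it requires exhibiting a single $3$-graph $H$ with $r(H,K_n^{(3)})$ growing faster than $n^{O(n)}$, i.e.\ a $3$-graph lying outside $\Fpolyfact$. I would therefore pursue this half of the conjecture only conditionally on~(i), expecting that in both halves the essential content is the same — converting the purely qualitative extra freedom of Proposition~\ref{prop:indep-edge} into a quantitative increase in extremal edge density, together with securing the matching upper bounds.
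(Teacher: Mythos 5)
The statement you are asked to prove is labeled a \emph{conjecture} in the paper; the authors do not claim a proof, and none is given. You cannot be graded against a paper proof, because there isn't one. What you have submitted is not a proof but a research program, and to your credit you say so explicitly: you flag each key step as a ``main obstacle'' or ``principal obstacle,'' and you correctly observe that step (i) of your second plan is at least as hard as the preceding conjecture in the paper (the existence of a single $3$-graph outside $\Fpolyfact$). So there is no hidden error in representing a partial argument as complete; there is simply no proof here.

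Within the program itself, the structure is reasonable but the gaps are fundamental. For $e(\Fpoly) < e(\Fpartpoly)$: your observation that every $H$ in either $\Fpoly$ or $\Fpartpoly$ has all links bipartite is correct (any non-bipartite link would contain an odd $C_g$, whence $H \supseteq L_{C_g}$ and $r(H, K_{n,n,n}^{(3)}) \ge n^{\Omega(n/g)}$ by Theorem~\ref{thm:oddgirthlowerbound}), so both densities are at most Razborov's $0.266$. But to separate them you would need \emph{both} (a) that $e(\Fpartpoly)$ equals the bipartite-link Tur\'an density $\rho$ — i.e.\ that every near-extremal bipartite-link $3$-graph satisfies $r(\,\cdot\,, K_{n,n,n}^{(3)}) = n^{O(1)}$, a strong structural statement you do not prove and which does not follow from the bipartite $L_G$ bounds of~\cite{CoFoSu} alone — \emph{and} (b) a strict gap $e(\Fpoly) < \rho$, for which you invoke an unstated ``stability argument'' that the paper's constructions (which use \emph{non}-bipartite links) do not supply. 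Note also that Proposition~\ref{prop:indep-edge} does not help manufacture a density gap here: adding an independent edge $\{u,v,w\}$ to a bipartite-link $3$-graph only connects two previously isolated vertices in each of the three links, so it preserves the bipartite-link property and cannot push the density past $\rho$. For $e(\Fpolyfact) < e(\Fpartpolyfact)$: as you say, the entire first half of the program requires proving that $\Fpolyfact$ omits some dense $3$-graphs, which is open. In short, the proposal is a well-organized survey of why the conjecture is open, not a proof of it.
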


\noindent {\bf Acknowledgements.} The first author would like to thank David Conlon, Nina Kamcev, Benny Sudakov, and Fan Wei for early discussions related to this paper. We are also grateful to Yuval Wigderson and the referee for many helpful comments on this manuscript.

\appendix

\section{Calculation for Theorem~\ref{thm:linkcliquelower}}\label{app:threshold}

\begin{lem}
For $s\ge 14$, $n \ge 2$, $m = {n+s \choose s}^{2/13}$, and $p = m^{-\frac{2}{s-1}}$,
\begin{equation}\label{eq:M-bound}
    \min \Big\{ (1-p^3)^{-3n}, \frac{m}{486en^2}\Big\} \ge {n + s \choose s}^{\Omega(1)}.
\end{equation}
\end{lem}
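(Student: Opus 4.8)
The plan is to handle the two quantities in the minimum separately, since with $B:=\binom{n+s}{s}$ the bound $\min\{X,Y\}\ge B^{\Omega(1)}$ is equivalent to having \emph{both} $X\ge B^{\Omega(1)}$ and $Y\ge B^{\Omega(1)}$. I will take $n$ large throughout; the only feature of $s$ I use is $s\ge 14$, and it enters solely through the estimate $B\ge\binom{n+14}{14}\ge\left(\frac{n+14}{14}\right)^{14}\ge (n/14)^{14}$, which rearranges to $n^{2}\le 196\,B^{1/7}$.

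For the second term, substitute this into $\frac{m}{486en^{2}}=\frac{B^{2/13}}{486en^{2}}$ to obtain, using $\frac{2}{13}-\frac{1}{7}=\frac{1}{91}$,
\[
\frac{m}{486en^{2}}\ \ge\ \frac{B^{2/13}}{486e\cdot 196\,B^{1/7}}\ =\ \frac{B^{1/91}}{C},\qquad C:=196\cdot 486\,e .
\]
Since $B\to\infty$ as $n\to\infty$, the right-hand side is at least $B^{1/182}$ as soon as $B^{1/182}\ge C$, i.e.\ for $n$ large; this is the easy half.

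For the first term, write $p^{3}=m^{-6/(s-1)}=B^{-q}$ with $q=\frac{12}{13(s-1)}\in(0,\frac{12}{169}]$, so the term equals $(1-B^{-q})^{-3n}$. I will use two elementary bounds: $(1-B^{-q})^{-3n}\ge e^{3nB^{-q}}$ (from $1-x\le e^{-x}$), and, whenever $q\ln B\le 1$, $(1-B^{-q})^{-3n}\ge (q\ln B)^{-3n}$ (from $1-e^{-y}\le y$ with $y=q\ln B$). Then split on the size of $q\ln B=\frac{12\ln B}{13(s-1)}$. If $q\ln B\le 1$, then $(q\ln B)^{-3n}=\left(\frac{13(s-1)}{12\ln B}\right)^{3n}$ has base $\ge1$, and combining the two-sided estimates $n\ln(1+s/n)\le\ln B\le s\ln\frac{e(n+s)}{s}$ reduces the target $3n\ln\frac{13(s-1)}{12\ln B}\ge c\ln B$ to a one-variable calculus inequality in $u:=1+\ln(s/n)\ge 1$ (the relevant range being $s\ge n$), which holds for a small absolute constant $c$; alternatively, in this case $B^{-q}\ge e^{-1}$ so $e^{3nB^{-q}}\ge e^{3n/e}$, which already suffices whenever $\ln B$ is not much larger than $n$.

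If instead $q\ln B>1$ — which, as one checks, forces $n$ to be at least a fixed positive fraction of $s$ — I use $e^{3nB^{-q}}$ together with $B\le(e(n+s)/s)^{s}$, giving $3nB^{-q}\ge 3n\,(e(1+n/s))^{-qs}$, an expression of order $s\,(1+n/s)^{\,1-qs}$. Since $qs=\frac{12s}{13(s-1)}<1$ for $s\ge14$, the exponent $1-qs=\frac{s-13}{13(s-1)}$ is a \emph{positive} constant (at least $\frac{1}{169}$, attained at $s=14$), so $3nB^{-q}$ is a fixed positive power of $1+n/s$, whereas $\ln B\le s\log(e(1+n/s))$ grows only logarithmically in it; hence $3nB^{-q}\ge c\ln B$ for a small absolute $c$, whether or not $n/s$ is bounded. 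Taking $c$ to be the smallest of the constants appearing above then yields both terms $\ge B^{c}$ for $n$ large, which is the claim. I expect the main obstacle to be precisely this last regime, $n\gg s$ with $s$ small: there $p$ is close to $0$, so $(1-p^{3})^{-3n}$ is only quasi-polynomial in $B$, of order roughly $\exp\!\big((n/s)^{\Theta(1/s)}\big)$, and one must isolate exactly the exponent $\frac{s-13}{13(s-1)}$ and verify it dominates $\ln B=\Theta\!\big(s\log(n/s)\big)$ — which is what forces the $s$-independent constant in $\Omega(1)$ to be quite small and is the reason one wants $n$ sufficiently large.
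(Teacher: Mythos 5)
Your proposal is correct and follows essentially the same route as the paper's Appendix~A argument: the second term is handled by the identical estimate $n^{2}=O(B^{1/7})$, and the first term by the same two elementary inequalities $1-x\le e^{-x}$ and $1-e^{-y}\le y$ in two complementary regimes, hinging in both treatments on the key fact that $\frac{12s}{13(s-1)}\le 1-\frac{1}{169}$ for $s\ge 14$. The only differences are cosmetic: you place the case boundary at $p^{3}=e^{-1}$ rather than at $s=10n$, and you finish the near-diagonal regime by a direct calculus minimization where the paper uses the shortcut $6\log m\le\sqrt{2esn}$.
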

\begin{proof}
Note that for $s\ge 14$, we have that
\[
m^{\frac{13}{14}} = {n+s \choose s}^{1/7} \ge \Omega(n^2).
\]
Thus,
\[
\frac{m}{486en^2} \ge \Omega(m^{\frac{1}{14}}) \ge {n + s \choose s}^{\Omega(1)},
\]
and it remains to show
\begin{equation}\label{eq:p-bound}
(1-p^3)^{-3n} \ge {n + s \choose s}^{\Omega(1)}.
\end{equation}
Now we break into two cases depending on whether or not $s\le 10n$. If $s\le 10n$, we use the bound $1-p^3 \le e^{-p^3}$, which gives
\[
(1-p^3)^{-3n} \ge e^{3p^3n} = e^{3m^{-6/(s-1)}n}.
\]
Taking logarithms, it suffices to show that
\[
m^{-6/(s-1)}n \ge \Omega\Big(\log {n + s \choose s}\Big).
\]
By our choice of $m$ and the fact that ${n + s \choose s} \le (11en/s)^s$ if $s\le 10n$, it suffices to show that
\[
\Big(\frac{11en}{s}\Big)^{\frac{12s}{13(s-1)}} \cdot s \log(11en/s) = O(n)
\]
uniformly over all $14\le s\le 11n$. The exponent $\frac{12s}{13(s-1)}$ takes its maximum value $1-\frac{1}{169}$ when $s=14$, so we get
\[
\Big(\frac{11en}{s}\Big)^{\frac{12s}{13(s-1)}} \cdot s \log(11en/s) \le (11en)^{1 - \frac{1}{169}} s^{\frac{1}{169}} \log (11en/s).
\]
Finally, using the fact that $\log(11en/s) = O((11en/s)^{1/169})$, the desired expression is $O(n)$ uniformly in $14\le s \le 10n$, as desired. This proves (\ref{eq:p-bound}) for $s\le 10n$.

Now we consider the case $s>10n$. In this case, $p = m^{-2/(s-1)}$ will be very close to $1$, so we write
\[
1-p^3 = 1-m^{-6/(s-1)} = 1-e^{-6\log m /(s-1)} \le \frac{6\log m}{s-1},
\]
using the fact that $1-e^{-x} \le x$ for all real $x$. For $s>10n$, we have ${n+s \choose s} \le (2es/n)^n$, so
\[
6\log m \le \frac{12}{13} n \log (2es/n) \le n \log (2es/n) \le \sqrt{2esn},
\]
since for $x> 20e$, $\log x \le \sqrt{x}$. Thus,
\[
(1-p^3)^{-3n} \ge \Big(\frac{s-1}{\sqrt{2esn}}\Big)^{3n} \ge \Big(\frac{2es}{n}\Big)^{\Omega(n)} \ge {n + s \choose s}^{\Omega(1)}
\]
as desired. This completes the proof of (\ref{eq:p-bound}) and the theorem.
\end{proof}

\end{document}